\def\vs{\vspace{0.1cm}}
\theoremstyle{plain}
\newtheorem{thm}{Theorem}[section]
\newtheorem{cor}[thm]{Corollary}
\newtheorem{pro}[thm]{Proposition}
\newtheorem{lem}[thm]{Lemma}
\newtheorem{proposition-principale}[thm]{Proposition principale}
\newtheorem{thm-principal}{Th\'eor\`eme principal}[section]
\theoremstyle{definition}
\newtheorem{eg}[thm]{Example}
\newtheorem{rem}[thm]{Remark}
\newenvironment{thm-A}
{{\vs \noindent \bf Theorem A.$\,$}\it}{\vs}
\newenvironment{thm-*}
{{\vs \noindent \bf Theorem.$\,$}\it}{\vs}
\newenvironment{thm-B}
{{\vs \noindent \bf Theorem B.$\,$}\it}{\vs}
\newenvironment{thm-C}
{{\vs \noindent \bf Theorem C.$\,$}\it}{\vs}
\newenvironment{thm-NB}
{{\vs \noindent \bf Novikov-Boone Theorem .$\,$}\it}{\vs}
\newenvironment{thm-D}
{{\vs \noindent \bf Theorem D.$\,$}\it}{\vs}
\def\C{\mathbf{C}}
\def\R{\mathbf{R}}
\def\Q{\mathbf{Q}}
\def\Z{\mathbf{Z}}
\def\F{{\mathcal{F}}}
\def\G{{\mathcal{G}}}
\def\U{{\mathcal{U}}}
\def\Pic{{\rm{Pic}}}
\def\J{{\text{\sc{j}}}}
\def\L{{\text{\sc{l}}}}
\def\P{\mathbb{P}}
\def\Sing{{\sf{Sing}}}
\def\End{{\sf{End}}}
\def\GL{{\sf{GL}}}
\def\SL{{\sf{SL}}}
\def\Ind{{\text{Ind}}}
\def\Exc{{\text{Exc}}}
\numberwithin{equation}{section}       
\begin{document}

\setlength{\baselineskip}{0.53cm}        
%
%
\title[Pseudo-automorphisms with no invariant foliation]
{Pseudo-automorphisms with no invariant foliation}
\author{Eric Bedford}
\thanks{Part of this work was done while E.B. visited Ecole Normale Sup\'erieure (Paris) during the fall of 2012, and he thanks the ENS for its hospitality.} 
\address{Department of mathematics\\ Indiana University\\ Bloomington IN 47405\\ USA,  { \em Current address: } Stony Brook University, Stony Brook, NY, 11794 }
\email{bedford@indiana.edu}
\author {Serge Cantat}
\address{D\'epartement de math\'ematiques et Applications (DMA)\\
         ENS Ulm\\
         Paris, rue d'Ulm\\
         France}
\email{serge.cantat@univ-rennes1.fr, cantat@dma.ens.fr}
\author{Kyounghee Kim}
\address{Department of mathematics\\Florida State University\\Tallahassee FL 32306\\USA}
\email{kim@math.fsu.edu}
%
%

%
%

%
%


\begin{abstract} We construct an example of a birational transformation of a rational threefold for which the first and second dynamical degrees coincide and are $>1$, but which does not preserve any holomorphic (singular) foliation.  In particular, this provides a negative answer to a question of Guedj. On our way, we develop several techniques to study  foliations which are invariant under birational transformations. 
 \end{abstract}

\maketitle

%
%
%
%
\section{Introduction}
%
%

We are interested in smooth complex projective varieties $M$ or, more generally, compact K\"ahler manifolds which 
carry invertible holomorphic or meromorphic transformations $h$ such that 
(i) $h$ has ``rich'' dynamics, and 
(ii) $h$ is not ``integrable''. 

To formalize property (i), we will ask for positive topological entropy ; as explained below, finer requirements can be formulated in terms of dynamical degrees.

 For property (ii), we will say that $h$ is integrable if it preserves a holomorphic (singular) foliation of $M$ of dimension $0 <  d < \dim_\C(M)$; this includes the case of invariant meromorphic fibrations, hence the notion of reducible or non-primitive birational transformations (see \cite{Zhang}, \cite{Cantat:Panorama}). There are several alternative notions of integrability. In \cite{Veselov}, a transformation $f\colon M\dasharrow M$ is integrable if there is a transformation $g\colon M\to M$ such that $g\circ f=f\circ g$ but $f^k\neq g^l$
 for all $(k,l)\neq (0,0)$. In \cite{Malgrange:2001, Malgrange:2002,Malgrange:2004}, tools from differential Galois theory are developed to measure the level of integrability of various dynamical systems; heuristically, a transformation is ``more integrable'' than another if it preserves a richer geometric structure: foliations, volume forms, affine structures, are examples of possible invariant geometric structure in this context (see \cite{casale:2006,casale:2007} for instance). Here, we focus on invariant foliations.
 
 \vspace{0.2cm}
 
 The main goal of this paper is to describe a new family of birational transformations on certain rational threefolds. These examples have three interesting properties: They are pseudo-automorphisms, which means that they are isomorphisms on the complement of Zariski closed subsets of codimension $2$; they are not integrable in the sense that they do not preserve any 
non-trivial foliation; their dynamical degrees exhibit a resonance (namely $\lambda_1(f)=\lambda_2(f)>1$). In particular, these transformations provide 
\begin{itemize}
\item the first examples of pseudo-automorphisms of threefolds which are (proved to be) non-integrable;
\item a negative answer to a question raised by Guedj.
\end{itemize} 

Before giving the precise statement of our results, we describe property (i) and Guedj's question in more detail. On our way, we summarize the main known results in dimension $\leq 2$.

\subsection{Entropy and dynamical degrees}

\subsubsection{} Let $h\colon M\dasharrow M$ be a rational transformation of a complex projective variety or, more generally, a meromorphic
transformation of a compact k\"ahler manifold. It may have indeterminacy points, at which it
does not extend continuously; this indeterminacy set $\Ind(h)$ is a Zariski-closed subset of $M$ of codimension $\geq 2$. 

Let $H^{p,q}(M;\C)$ denote the Dolbeault cohomology groups of $M$. The groups $H^{p,p}(M;\C)$ inherit a natural 
real structure, and the subgroups $H^{p,p}(M;\R)$ contain the cap products of the k\"ahler classes. This is the main reason why
it is sufficient, in what follows, to focus on these cohomology groups (see \cite{Guedj:Panorama}). 

Although $h$ may not  be continuous, it determines a linear operator $h^*_p$ on $H^{p,p}(M;\R)$; however, $(h^*_p)^n$ may
differ from $(h^n)^*_p$ for some values of $n$ and $p$. 
The exponential growth rate of the sequence of linear transformations
$(h^n)^*_p$ is defined by 
\[
\lambda_p(h)= \lim_{n\to +\infty} \parallel (h^n)^*_p\parallel^{1/n}
\]
and is called the {\bf{dynamical degree}} of $h$ of codimension $p$; this real number does not depend on the choice 
of the norm $\parallel \cdot \parallel$ on $\End(H^{p,p}(M;\R))$, and remains invariant if one conjugates $h$ by a birational
map $\varphi\colon M'\dasharrow M$ (see \cite{Guedj:Panorama, Dinh-Sibony}). For $p=0$, one gets $\lambda_0(h)=1$, and for $p=\dim(M)$
the dynamical degree $\lambda_{\dim(M)}(h)$ coincides with the topological degree of $h$. 

We may think of the $p$th dynamical degree as giving the growth rate of cohomology in bidegree $(p,p)$, or volume growth in complex codimension $p$.  The growth of the iterates on the total cohomology group $H^*(M)$  will be dominated by the restrictions to $H^{p,p}(M)$ for $1\le p\le {\rm dim}(M)$.

\subsubsection{} When $h$ is a regular endomorphism of $M$, Gromov and Yomdin proved that the topological entropy
${\sf{h}}_{top}(h)$ of $h\colon M\to M$ coincides with the maximum of the numbers $\log(\lambda_p(h))$, $0\leq p\leq \dim(M)$; Dinh and
Sibony proved that Gromov's upper bound 
\[
{\sf{h}}_{top}(h)\leq \max_{p} \log(\lambda_p(h))
\]
remains valid for dominant meromorphic transformations of compact k\"ahler manifolds (see \cite{Gromov, Yomdin, Dinh-Sibony}). 
Thus, the dynamical degrees provide an upper bound for the complexity of the dynamics of $h$.

One says that $h$ is {\bf{cohomologically hyperbolic}} if the dynamical degrees $\lambda_p(h)$ have a unique maximum. Since
the function $p\mapsto \log(\lambda_p(h))$ is concave, this is equivalent to the condition $\lambda_p(h)\neq \lambda_{p+1}(h)$
for all $0\leq p\leq \dim(M)-1$ (see \cite{Guedj:Panorama}). Thus, cohomological hyperbolicity is a kind of 
non-resonance condition for dynamical degrees. 

\begin{eg}

\noindent{\bf{a.--}} If $f$ is a birational transformation of a projective surface and $\lambda_1(f)>1$, then $f$ is cohomologically
hyperbolic.

\noindent{\bf{b.--}} A paradigmatic example is given by linear endomorphisms of tori. More specifically, consider an elliptic curve $E=\C/\Lambda$, a positive
integer $d$, and the complex torus $A=E^d=\C^d/\Lambda^d$. Let $B$ be a $d\times d$ matrix with integer entries. Since the linear transformation 
$B\colon \C^d\to \C^d$ preserves the lattice $\Lambda^d$, it induces an endomorphism $f_B$ of $A$. The number $\lambda_p(f)$ 
coincides with the square of the spectral radius of $B$ acting on the space $\wedge^p(\C^d)$. For instance, if $B$ is diagonalizable (over $\C$),
with non-zero eigenvalues $\alpha_1$, $\ldots$, $\alpha_d$, then $f_B$ is cohomologically hyperbolic if and only if the moduli $\vert \alpha_i\vert$
are pairwise distinct. 
\end{eg}

Cohomological hyperbolicity has strong dynamical consequences. For instance if either of the following occurs:
\begin{itemize}
\item $\lambda_{\dim(M)}(h)>\lambda_p(h)$ for all $p< \dim(M)$, or 
\item $h$ is an automorphism of a complex projective surface with $\lambda_1(h)>1$ (see also \cite{Bedford-Diller:2005, Dujardin} for birational transformations),
\end{itemize}
then $h$ preserves a unique probability measure $\mu_h$ with entropy $\log(\lambda_{\dim(M)}(h))$ (resp. $\log(\lambda_1(h))$); in particular, 
this number is equal to the topological entropy of $h$; moreover, isolated periodic points of $h$ of period $n$ equidistribute towards
$\mu_h$ as $n$ goes to $+\infty$, and most of them are repelling (resp. saddle, in the second case) periodic points. See \cite{Guedj:Panorama, DNT,Cantat:Milnor}
for this kind of result.

\subsection{Low dimensions} 
In the case of dimension $1$, $M$ is a compact Riemann surface.  When the genus is $\ge2$, 
the automorphism group is finite, and when the genus is $0$ or $1$, an automorphism is essentially 
linear or affine.  Thus the holomorphic dynamics of a single invertible transformation in dimension $1$ is quite 
simple.

Assume now that $M$ is a compact complex surface. If $M$ carries bimeromorphic transformations with positive entropy, then $M$ is bimeromorphically equivalent to (1) a torus, (2) a K3 surface or an Enriques surface, or (3) the projective plane (see \cite{Cantat:Milnor}), and there are strategies to construct examples of automorphisms with positive entropy in all three cases.

If $M$ is a torus, a K3 surface, or an Enriques surface, then all bimeromorphic transformations of $M$ are indeed regular and, as such, are automorphisms of the surface. The so-called Torelli theorem provides a tool to determine the group of automorphisms of such a surface once the Hodge structure is known, but this tool is very hard to use in practice for K3 and Enriques surfaces. The case of $2$-dimensional tori is simpler (see \cite{Ghys-Verjovsky}): A good example is provided by tori of the type $E\times E$, where $E$ is an elliptic curve; the group $\GL_2(\Z)$ always acts by automorphism on such tori. 

If $M$ is rational (case (3)), the situation is more delicate: There are many birational transformations with positive entropy; but deciding whether $M$ carries automorphisms with positive entropy is a difficult task for which there is no general strategy. 

Concerning integrability, Diller and Favre proved that the existence of an invariant fibration is not compatible with positive entropy; Cantat and Favre \cite{Cantat-Favre:2003} showed that if $f$ is an infinite order bimeromorphic transformation of a compact k\"ahler surface $M$ that preserves a foliation then, up to a bimeromorphic conjugacy and finite ramified covers, $f$ comes from a monomial transformation of the plane or an affine transformation of a torus (two cases for which 
there are always invariant foliations). Thus the surfaces which can carry maps with invariant foliations, as well as the maps themselves, are special and explicit.

\subsection{Guedj's question}  {\sl{If $h$ is a birational transformation 
of a compact k\"ahler manifold that is not cohomologically hyperbolic, does it necessarily preserve a non-trivial 
fibration or (singular) holomorphic foliation ?}} (see \cite{Guedj:Panorama}, page 103). \\

The idea behind this conjecture is that a resonance between dynamical degrees should be explained by the existence of an invariant algebraic or analytic structure. 
For instance Gizatullin's Theorem states that a birational transformation $f$ of a surface $S$ which
 is not cohomologically hyperbolic preserves a meromorphic fibration:
there is a rational fibration $\pi\colon S\dasharrow  B$ and an automorphism ${\overline{f}}$ of the curve $B$ such that $\pi\circ f={\overline{f}}\circ \pi$ (see \cite{Diller-Favre:2001} and the references therein).  We shall prove that Guedj's question has a negative answer in dimension $3$; the answer is also negative in dimension $2$ for non-invertible rational transformations (see \S \ref{par:KPR} below).

\vfill
\pagebreak

\subsection{The example} 

\subsubsection{}\label{par:Part0} Let $\J$ be the Cremona involution of $\P^3_\C$, defined in homogeneous coordinates by
\[
\J\colon [x_0:x_1:x_2:x_3]\mapsto [x_0^{-1}:x_1^{-1}:x_2^{-1}:x_3^{-1}].
\]
Given $a$ and $c$ in $\C\setminus\{0\}$, let $\L$ be the projective linear transformation 
\[
\L\colon  [x_0:x_1:x_2:x_3]\mapsto [x_3 : x_0+ax_3 : x_1: x_2+ cx_3].
\]
Let $f_{a,c}$ be the composition $\L\circ \J$. We shall prove that $f_{a,c}$ lifts to a pseudo-automorphism of a rational threefold $X_{a,c}$
after a finite sequence of blow-ups of points if 
\begin{equation}\label{eq:parameters}
\ell a^2 + (\ell+1) ac + \ell c^2=0
\end{equation}
for some $\ell>1$. In what follows, fix such parameters $(\ell ,a,c)$, and denote by $f$ the pseudo-automorphism $f_{a,c}\colon X_{a,c}\dasharrow X_{a,c}$.

\subsubsection{} First, we prove that $f^*$ is {\bf{reversible}}: $(f)^*_1$ and $(f^{-1})^*_1$ are conjugate linear transformations of $H^{1,1}(X_{a,c};\R)$. Since $f$ is a pseudo-automorphism, it satisfies $(f^*_1)^{-1}= (f^{-1})^*_1$, and we deduce from the reversibility and the duality between 
$H^{1,1}(X_{a,c};\R)$ and $H^{2,2}(X_{a,c};\R)$  that $\lambda_1(f)= \lambda_2(f)$. Thus, $f$ is not cohomologically hyperbolic. On the other hand, $\lambda_1(f)>1$ if $\ell >1$.

\begin{thm}\label{thm:main} Let $\ell$ be an integer $\geq 2$. 
Let $f$ be the birational transformation $\L\circ \J$, with parameters $(a,c)$ that satisfy Equation~\eqref{eq:parameters}. 
Then $f$ lifts to a pseudo-automorphism of a rational threefold $X_{a,c}$, obtained from $\P^3_\C$ by a finite
sequence of blow-ups of points. Moreover 
\begin{enumerate}
\item $\lambda_1(f)=\lambda_2(f)>1$;
\item $f$ preserves a hypersurface of $X_{a,c}$ on which it induces a birational transformation which is not conjugate to an automorphism;
\item $f$ is not birationally conjugate to an automorphism of a manifold;
\item Neither $f$ nor any of its iterates $f^n$ for $n>0$, preserves any (singular) foliation of dimension $1$ or $2$; in particular, there is no non-trivial invariant fibration.
\end{enumerate}
\end{thm}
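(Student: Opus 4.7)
The plan is to construct $X_{a,c}$ by blowing up the four coordinate vertices of $\P^3_\C$ (the indeterminacy points of $\J$) together with the points in the finite orbits of the three coordinate axes contracted by $\J$: the condition \eqref{eq:parameters} is tailored to close these orbits up after $\ell$ steps, and once they are all resolved no hypersurface is contracted by any iterate of $f$, so $f$ lifts to a pseudo-automorphism of $X_{a,c}$. In a suitable basis of $\NS(X_{a,c})\otimes\R$, $(f^*_1)$ becomes an explicit integer matrix whose characteristic polynomial is reciprocal and has a Perron root strictly greater than $1$ for $\ell\geq 2$. Reversibility is produced via an explicit birational involution conjugating $f$ to $f^{-1}$; combined with the pseudo-automorphism identity $(f^*_1)^{-1}=(f^{-1})^*_1$ and Poincar\'e duality between $H^{1,1}(X_{a,c};\R)$ and $H^{2,2}(X_{a,c};\R)$, this yields $\lambda_1(f)=\lambda_2(f)>1$, proving~(1).

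For (2), I would locate an $f$-invariant irreducible hypersurface $S\subset X_{a,c}$ -- produced either from the fixed locus of the involution responsible for reversibility, or from an $f^*$-invariant effective class in $\NS(X_{a,c})$ -- and follow its strict transform through the blow-ups. The restricted map $f_S\colon S\dasharrow S$ is birational; restricting $(f^*_1)$ to the classes meeting $S$ computes $\lambda_1(f_S)$, which is shown to be $>1$, and I would exhibit at least one irreducible curve $C\subset S$ whose forward $f_S$-orbit enters $\Ind(f_S)$. The Diller--Favre criterion then forbids $f_S$ from being birationally conjugate to an automorphism of a smooth surface, which is~(2). Part~(3) follows at once: were $f$ birationally conjugate to an automorphism $g$ of a smooth projective threefold $Y$, the strict transform of $S$ in $Y$ would be $g$-invariant, and $g$ would lift to a biregular automorphism of its minimal desingularization -- an automorphism birationally conjugate to $f_S$ -- contradicting~(2).

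The main obstacle is~(4). I would argue by contradiction: suppose some iterate $g=f^n$ preserves a singular holomorphic foliation $\F$ of dimension $d\in\{1,2\}$; all relevant properties of $f$ pass to $g$, so we may argue with $g$ directly. The determinant of the normal sheaf $N_\F$ defines a class $c_1(N_\F)\in\NS(X_{a,c})\otimes\R$ satisfying $(g^*_1)c_1(N_\F)=\mu\cdot c_1(N_\F)+E$, where $E$ is supported on $g$-invariant divisors and $\mu$ is determined by the Jacobian of $g$ along the leaves. Reversibility and the explicit spectrum of $(f^*_1)$ cut the admissible pairs $(\mu,c_1(N_\F))$ down to a short explicit list. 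For each candidate, I would exploit the invariant hypersurface $S$ from~(2): restricting or intersecting $\F$ with $S$ produces a nontrivial $f_S$-invariant geometric structure on $S$ -- a foliation, a pencil of curves or a fibration -- and each such structure is excluded either by~(2), by the two-dimensional classification of \cite{Cantat-Favre:2003}, or by the Diller--Favre theorem on invariant fibrations. The genuine difficulty is to bridge the gap between the cohomological eigenspace constraints on $c_1(N_\F)$ and the geometric rigidity of the leaves of $\F$ in the presence of its singularities along the exceptional divisors; this is where the techniques for birationally invariant foliations developed earlier in the paper are designed to apply, and I would deploy them to rule out the remaining cases.
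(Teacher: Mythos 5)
Your overall architecture does track the paper's: blow up points (the vertices $e_1,e_2,e_3$ and the orbit $p_1,\dots,p_{4\ell},e_0$ of the image of the contracted plane $\Sigma_3$, closed up by the $\ell$-condition), compute $f_X^*$ on $\Pic$, get $\lambda_1=\lambda_2>1$ from reversibility plus duality, restrict to an invariant hypersurface for (2), and deduce (3) from (2). But two of your steps, as stated, would fail.

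For (2): the invariant hypersurface is the cycle $\Gamma=\Sigma_0^X\cup E_1\cup\Sigma_2^X\cup E_3$ and the relevant restriction is $g=f^4\vert_{\Sigma_0}$, a degree-$4$ birational map of $\P^2$. Your proposed criterion --- exhibiting a curve whose forward orbit enters $\Ind(f_S)$ --- only shows that the chosen model is not algebraically stable; it does not obstruct birational conjugacy to an automorphism, since a conjugacy to another surface can destroy exactly such orbit relations. The paper's obstruction is intrinsic: $\lambda_1(g)$ is the largest root of $x^\ell-x^{\ell-1}-\cdots-x-1$, a Pisot number, whereas the dynamical degree of a surface automorphism must be a Salem or quadratic integer; and for $\ell=2$, where $\lambda_1(g)$ is the golden mean and its square is reciprocal, one needs the finer Diller--Favre criterion that the $\lambda_1(g)$-eigenvector in $\Pic(W)$ has positive self-intersection. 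Note also that $\lambda_1(g)$ is not obtained by ``restricting $f_1^*$ to classes meeting $S$'': it is computed on $\Pic$ of a separately blown-up model $W$ of $\Sigma_0$ and satisfies a different polynomial than $\lambda_1(f)$.

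For (4): ``restricting or intersecting $\F$ with $S$'' yields an invariant structure on $S$ only when $\F$ is tangent to $S$ or leaves a nontrivial trace there; the hard case, which you flag but do not resolve, is a one-dimensional foliation generically transverse to every component of $\Gamma$. The paper disposes of it by a local computation showing that $\J$ converts transversality to an edge $\Sigma_{0,1}$ of the coordinate tetrahedron into tangency along the opposite edge $\Sigma_{2,3}$, which forces $\L(\Sigma_{1,2})$ to be an algebraic leaf whose infinite $g$-orbit lies in the tangency locus of $\F$ with $\Sigma_0$, contradicting that this locus is a proper Zariski-closed set. For codimension-$1$ foliations, after pinning $c_1(N^*_\G)$ to a multiple of $[\Gamma]$ one must still show the defining global $1$-form is closed, that its residues along $\Gamma$ vanish, and integrate it to a global meromorphic first integral, reducing to the non-existence of an invariant fibration --- which itself requires a separate explicit computation with the pencil's homogeneous equations leading to $a^{2r}=-c^{2r}$, incompatible with the $\ell$-condition. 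None of these steps is supplied or replaced in your outline, so the core of (4) is missing.
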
 

Thus, the same construction gives, indeed, infinitely many examples. New techniques are required to understand their dynamical behavior 
more precisely. For instance, it is not clear whether the cohomological equality $\lambda_1(f)=\lambda_2(f)$ forces some dynamical resonance: Does $f$ preserve a unique measure of maximal entropy ? Are there unusual equalities between the Lyapunov exponents of such a measure ? (see \cite{Guedj:Panorama})

\vfill
\pagebreak

\subsection{Comments}
 
\subsubsection{} If $X$ is the iterated blow-up of $\P^3$ along a finite sequence of points, then by Truong's Theorem \cite{Truong:2012uq} (see also \cite{Bayraktar-Cantat}) every automorphism has dynamical degree 1 and thus entropy zero.  Thus, the best one could hope is to construct pseudo-automorphisms.

\subsubsection{} In the second appendix, we discuss the existence of invariant fibrations for automorphisms of tori of dimension $3$.   These results are also part of the work on tori  \cite{OT1}, carried out independently by Oguiso and Truong.  In addition, \cite{OT2} constructs a rational threefold which carries an automorphism with positive entropy, but which does not have an invariant fibration.  However,  this automorphism is constructed from a torus automorphism and thus has an invariant foliation.

\subsubsection{}\label{par:KPR} Kaschner, P\'erez, and Roeder found recently a rational transformation $f$ of 
the projective plane $\P^2(\C)$
with dynamical degrees $\lambda_2(f)=\lambda_1(f)$ that does not preserve any foliation \cite{KPR}. Thus, Guedj's question has a negative answer for non-invertible maps in dimension $2$, and for invertible maps in dimension $3$, and a positive answer for invertible maps in dimension $2$.

\subsubsection{} This paper is an expanded version of \cite{Bedford-Kim:2012}, in which the pseudo-automor\-phis\-ms $f_{a,c}$ 
where constructed, their dynamical degrees were computed, and the non-existence of invariant fibration was proved. 

See also \cite{Bedford-Diller-K} for related constructions of pseudo-automorphisms. 

\subsubsection{}  Let $f=L\circ \J$ be the composition of a projective linear transformation $L$ of $\P^3$ with $\J$. 
The birational involution $\J$ has exceptional hyperplanes $\Sigma_k=\{x_k=0\}$, $0\le k\le 3$, and $f$ maps $\Sigma_k$ to the $k$-th column of $L$.  The condition for $f$ to be a pseudo-automorphism is, loosely speaking, the condition that the forward orbit of each $\Sigma_k$ lands on one of the points of indeterminacy $e_j$.  Such maps are called {\it elementary} in \cite{Bedford-Kim:2004}.    When such an elementary map lifts to a pseudo-automorphism, the form of $f_X^*$ is described explicitly in \cite{Bedford-Kim:2004}, from which it satisfies the hypotheses of Lemma 2.1 and thus  $\lambda_1(f) = \lambda_2(f)$. 

While the condition that all the $f$-orbits of the $\Sigma_k$ land on points of indeterminacy is easily stated, it poses a system of equations that is hard to solve computationally.  In \cite{Bedford-Diller-K}, the computational difficulties are circumvented by requiring that $f$ preserves a curve $C$ and by using the restriction $f_{\vert C}$.  While this reduction allows to find solutions, the solutions that are found are quite special.  

In the present paper, the maps $\L\circ J$ that satisfy condition $\ell$ do not belong to any of the families constructed in \cite{Bedford-Diller-K}, but they have a similar motivation. Among the various solutions we found, the $\L$ given above seems to be the easiest to work with.

%
%
\section{Pseudo-automorphisms : construction}\label{par:Part2} 
%
%

\subsection{Pseudo-automorphisms in dimension $3$}

Let $M$ be a smooth complex projective variety of dimension $3$. Let $f$ be a birational transformation of $M$. 
The indeterminacy locus $\Ind(f)$ is a Zariski closed subset of $M$ of dimension $\leq 1$ (it may have components 
of dimension $0$ and of dimension $1$). One says that an irreducible subvariety $V$ of $M$ is exceptional for $f$
if $V$ is not contained in $\Ind(f)$ and
\[
\dim(f(V\setminus\Ind(f)))<\dim(V).
\]
Let $p$ be a point of $V\setminus \Ind(f)$. Using local coordinates near $p$ and $f(p)$, $V$ is exceptional if and only
if it is contained in the zero locus of the jacobian determinant of $f$. Thus, the union $\Exc(f)$ of the exceptional subvarieties is
either empty or a subvariety of $M$ of codimension $1$. 

One says that $f$ is a {\bf{pseudo-automorphism}} if $\Exc(f)$ and $\Exc(f^{-1})$ are both empty. 
All pseudo-automorphisms share the following nice properties (see \cite{Bedford-Kim:2013}): 
\begin{itemize}
\item $\Ind(f)$ and $\Ind(f^{-1})$ do not contain isolated points (all their components have dimension $1$);
\item $(f^*)_p^n=(f^n)_p^*$ for all $0\leq p\leq 3$ and for all integers $n\in \Z$; 
\item in particular $(f^*)_p^{-1}=(f^{-1})_p^*$.
\end{itemize}

\begin{lem}
Let $f$ be a pseudo-automorphism of a smooth complex projective threefold $M$. If $f^*_1$ and $(f^*)^{-1}_1$ 
are conjugate linear transformations of $H^{1,1}(M;\C)$ then $\lambda_1(f)=\lambda_2(f)$. 
\end{lem}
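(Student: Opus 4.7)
The plan is to identify each dynamical degree with a spectral radius and then use Poincaré duality together with the hypothesis.

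First, since $f$ is a pseudo-automorphism, the bullet points preceding the lemma give $(f^n)^*_p=(f^*_p)^n$ for every $n\in\Z$ and every $p$. Consequently $\lambda_p(f)$ equals the spectral radius of $f^*_p$ on $H^{p,p}(M;\C)$, for $p=1$ and $p=2$. The task therefore reduces to comparing the spectral radii of $f^*_1$ and $f^*_2$.

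Next I would exploit the cup product pairing $H^{1,1}(M;\C)\times H^{2,2}(M;\C)\to H^{3,3}(M;\C)\cong\C$, which is non-degenerate by Poincaré/Serre duality on the smooth projective threefold $M$. The key point is that this pairing is preserved by $f^*$ up to the topological degree of $f$: for any $\alpha\in H^{1,1}$ and $\beta\in H^{2,2}$,
\[
\langle f^*\alpha,\,f^*\beta\rangle=\int_M f^*(\alpha\cup\beta)=\deg_{\mathrm{top}}(f)\cdot\langle\alpha,\beta\rangle.
\]
Because $f$ is a pseudo-automorphism it is bijective outside a codimension-two set, so $\deg_{\mathrm{top}}(f)=1$. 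Combined with $(f^*_1)^{-1}=(f^{-1})^*_1$, this identity says that $f^*_2$ is the transpose of $(f^*_1)^{-1}$ with respect to the perfect pairing between $H^{1,1}(M;\C)$ and $H^{2,2}(M;\C)$. In particular the spectral radius of $f^*_2$ equals the spectral radius of $(f^*_1)^{-1}$.

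Finally I would invoke the hypothesis: since $f^*_1$ and $(f^*_1)^{-1}$ are conjugate as endomorphisms of $H^{1,1}(M;\C)$, they share the same spectrum, and in particular the same spectral radius. Stringing the equalities together,
\[
\lambda_1(f)=\rho(f^*_1)=\rho((f^*_1)^{-1})=\rho(f^*_2)=\lambda_2(f),
\]
which is the desired conclusion.

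The main technical point is the duality step that identifies $f^*_2$ with the transpose of $(f^*_1)^{-1}$. This hinges on the topological degree being $1$ and on the equality $(f^n)^*=(f^*)^n$, both of which are consequences of $f$ being a pseudo-automorphism; once these facts are in hand, the rest is a direct spectral radius calculation.
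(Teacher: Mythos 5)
Your proof is correct and follows essentially the same route as the paper's: both identify $f^*_2$ as the transpose of $(f^*_1)^{-1}$ via the perfect pairing $H^{1,1}\times H^{2,2}\to\C$ and then use the conjugacy hypothesis to equate the spectral radii, the only cosmetic difference being that the paper states the adjunction as $f^*_1(v)\cdot w=v\cdot(f^*_2)^{-1}(w)$ and phrases the intermediate step as $\lambda_1(f)=\lambda_2(f^{-1})$. One small caution: your derivation of the pairing invariance via $\int_M f^*(\alpha\cup\beta)$ tacitly uses multiplicativity of $f^*$, which fails for general meromorphic maps; it is cleaner to quote the projection formula $\langle f^*_1\alpha,\beta\rangle=\langle\alpha,(f^{-1})^*_2\beta\rangle$ together with $(f^{-1})^*_2=(f^*_2)^{-1}$, which is exactly the form the paper uses.
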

\begin{proof}
Since $f^*_1$ and $(f_2^{-1})^*$ are conjugate linear operators with respect to the intersection pairing, i.e.
\[
f^*_1(v)\cdot w=v\cdot (f^*_2)^{-1}(w)\quad \forall (u, v) \in H^{1,1}(M;\R)\times H^{2,2}(M;\R),
\]
one gets $\lambda_1(f)=\lambda_2(f^{-1})$. If $f_1^*$ is conjugate to its inverse, one has $\lambda_1(f)=\lambda_1(f^{-1})$.
The conclusion follows.
\end{proof}

\subsection{A family of examples}

\subsubsection{Definition}
As explained in Section~\ref{par:Part0}, we consider the birational transformations $f=\L\circ \J\colon \P^3_\C\dasharrow\P^3_\C$ 
with 
\begin{eqnarray*}
\J\colon [x_0:x_1:x_2:x_3] & \mapsto & [x_0^{-1}:x_1^{-1}:x_2^{-1}:x_3^{-1}], \\
\L\colon  [x_0:x_1:x_2:x_3] & \mapsto & [x_3 : x_0+ax_3 : x_1: x_2+ cx_3].
\end{eqnarray*}
Here, $a$ and $c$ are non-zero complex parameters. The inverse of $\L$ is the projective linear
transformation 
\[
\L^{-1}\colon  [x_0:x_1:x_2:x_3]    \mapsto  [x_1-ax_0 : x_2 : x_3-cx_0 : x_0].
\]

\subsubsection{Geometry of $\J$}\label{par:geom-J}
The  indeterminacy locus $\Ind(\J)$
is the union of the six edges
\[
\Sigma_{i,j}=\{x_i=x_j=0\}\subset \P^3_\C
\]
of the tetrahedron $\Delta=\{x_0x_1x_2x_3=0\}$. The exceptional locus $\Exc(\J)$ is the union of the four faces 
\[
\Sigma_i=\{x_i=0\};
\]
each plane $\Sigma_i$ is mapped to the opposite vertex
\[
e_i=\{ x_l=0, \forall l\neq i\}.
\]
Since $\J$ is an involution, it blows up each $e_i$ to the opposite face $\Sigma_i$. 

To describe how $\J$ acts on a neighborhood of an edge of $\Delta$, consider the edge $\Sigma_{0,1}$. 
The family of hyperplanes of $\P^3$ containing the line $\Sigma_{0,1}$ is the family $\{sx_0=tx_1\}$. It is globally invariant under the action of $\J$: The plane $\{sx_0=tx_1\}$ is transformed into the plane $\{tx_0=sx_1\}$; in particular, the plane $\Pi=\{x_0=x_1\}$ is invariant. 

On $\Pi$, $\J$ acts as a standard quadratic involution of $\P^2$, mapping $[x_1:x_1:x_2:x_3]$ to $[x_2x_3:x_2x_3:x_3x_1:x_1x_2]$; the line $\Sigma_{0,1}\subset \Pi$ is mapped to the point $[1:1:0:0]$ and the family of lines $\{s'x_2=t'x_3\}$ is globally invariant. The action of $\J$ 
from $\Pi_{s,t}=\{sx_0=tx_1\}$ to $\Pi_{t,s}=\{tx_0=sx_1\}$ is similar.
Thus, locally, $\J$ transforms the family of planes $\Pi_{s,t}$ containing $\Sigma_{0,1}$ to the family of planes $\Pi_{t,s}$ transverse to $\Sigma_{2,3}$.

Another way to describe the same picture is as follows. Blow-up the line $\Sigma_{0,1}$. The exceptional divisor $E$ is a product $\P^1\times \P^1$ and the blow-down map $\pi$ contracts the fibers $\{\star\}\times \P^1$ of the first projection. The strict transform of the pencil of planes $\Pi_{s,t}$ intersect $E$ on the family of horizontal curves $\P^1\times \{\star\}\subset E$. Similarly, the fibers of $\pi$ can be identified to the intersection of $E$ with the strict transforms of the planes $\{s'x_2=t'x_3\}$. Then, $\J\circ \pi$ maps $E$ onto $\Sigma_{2,3}$: It contracts the horizontal curves $\P^1\times \{\star\}$ to $\Sigma_{2,3}$.

\begin{figure}[t]
\centering\epsfig{figure=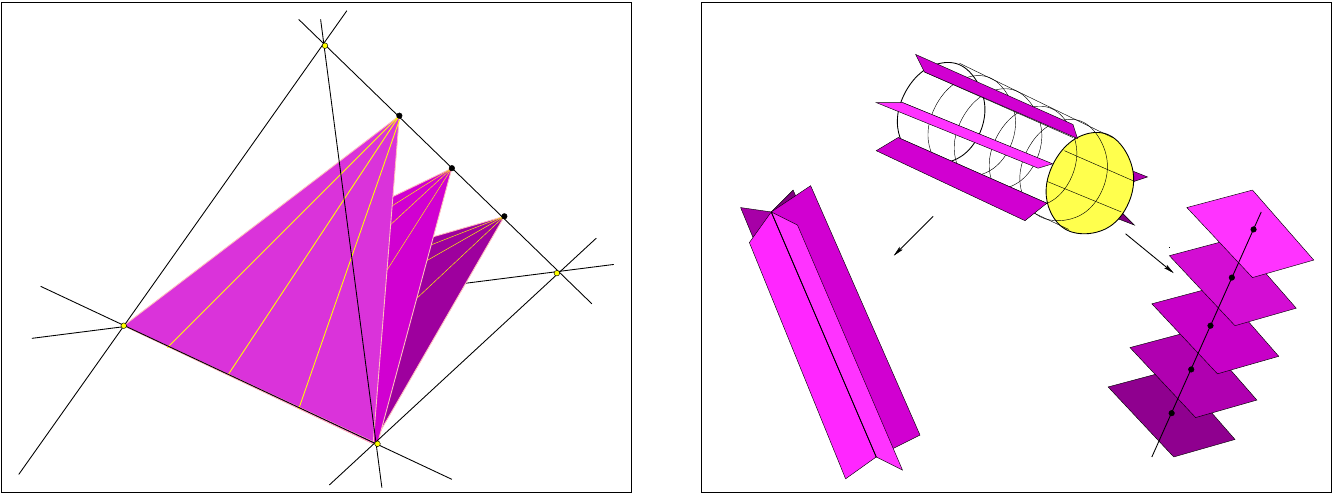} \label{fig:projD_4}
\caption{ {\sc{Action of $\J$}} }
\end{figure}

\subsubsection{Back to $f$}
Since $\L$ is an automorphism, $\Ind(f)$ and $\Exc(f)$ coincide with $\Ind(\J)$ and $\Exc(\J)$. On the other hand, 
$\Ind(f^{-1})$ and $\Exc(f^{-1})$ are respectively equal to $\L(\Ind(\J))$ and $\L(\Exc(\J))$; these sets correspond to
the edges and faces of the tetrahedron $\L(\Delta)= \{x_3 ( x_0+ax_3 ) x_1( x_2+ cx_3) =0\}$.

The images of the four planes $\Sigma_i$ under the action of $f$ are the vertices of the tetrahedron $L(\Delta)$; more precisely, 
\[
f(\Sigma_0')=e_1,\, \; f(\Sigma_1')=e_2, \,\; f(\Sigma_2')=e_3, \,\; {\text{and}}\, \; f(\Sigma_3')=p_1,
\]
where 
\[
p_1=[1 : a : 0 : c]
\]
and $f(\Sigma_i')$ stands for $f(\Sigma_i\setminus \Ind(f))$. 
Since $\J$ blows up $e_i$ to $\Sigma_i$, $f$ blows up $e_i$ to $\L(\Sigma_i)$, i.e. 
\[
f\colon e_0\rightsquigarrow \{x_1=ax_0\}, \quad e_1\rightsquigarrow \Sigma_2, \quad e_2 \rightsquigarrow \{x_3=cx_0\}, \quad e_3 \rightsquigarrow \Sigma_0.
\]

\subsubsection{An intermediate threefold $Y$}

Blowing up the projective space $\P^3_\C$ at the three points $e_1$, $e_2$, and $e_3$, we get a rational threefold $Y$, together
with a birational morphism $\pi\colon Y\to \P^3_\C$. We denote by $E^Y_1$, $E^Y_2$, $E^Y_3$ the three exceptional divisors: Each $E^Y_j$
is blown down to $e_i$ by $\pi$. Then, we lift $f$ to a birational transformation $f_Y:=\pi^{-1}\circ f \circ \pi$ of $Y$. 

Denote by $\Sigma_j^Y$ the strict transform of $\Sigma_j$ inside $Y$. It is easily verified that $f_Y$ induces a birational transformation 
from $\Sigma_j^Y$ to $E^Y_{j+1}$ for $j=0$, $1$, $2$, and that $f_Y$ blows down $\Sigma_3^Y$ onto the point $\pi^{-1}(p_1)$. 
Similarly, $f_Y$ maps each exceptional divisor $E^Y_j$, $1\leq j\leq 3$, onto the strict transform of the plane $\L(\Sigma_j)$ birationally.  
Thus, the exceptional locus of $f_Y$ coincides with $\Sigma_3^Y$. Moreover $f_Y$ preserves the divisor 
\[
\Gamma^Y:=\Sigma_0^Y \cup E^Y_1 \cup \Sigma_2^Y \cup E^Y_3,
\]
permuting the four irreducible components as follows;
\[
\Sigma_0^Y\dasharrow E^Y_1 \dasharrow \Sigma_2^Y \dasharrow E^Y_3 \dasharrow \Sigma_0^Y.
\]

\subsubsection{An invariant cycle of curves contained in $\Gamma^Y$}
Let us consider the restriction of $f_Y$ to the invariant divisor $\Gamma^Y$. We define four curves $\beta_i\subset Y$ as follows. 
\begin{eqnarray*}
\beta_0 & {\text{is the strict transform of}} & \Sigma_0\cap \{ ax_1 = c x_3\}, \\
\beta_1 & {\text{is the strict transform of}}  & E^Y_1\cap \{ax_0=cx_2\}, \\
\beta_2 & {\text{is the strict transform of}}  & \Sigma_2 \cap \{ cx_1=ax_3\}, \\
\beta_3 & {\text{is the strict transform of}}  & E^Y_3 \cap \{cx_0=ax_2\}.
\end{eqnarray*}
The curve $\beta_2$ is the (strict transform) of the line through $e_0$ and $p_1$.
An explicit computation shows that $f_{Y\vert \Gamma^Y}$ permutes the $\beta_i$ cyclically. Moreover, none of these
curves is contained in $\Ind(f_Y)$. 

\begin{eg}\label{eg:Beta}
To describe the kind of calculation that leads to this remark, consider the curve $\beta_2$. It can be parametrized by
$t\in \P^1_\C\mapsto \pi^{-1}\circ \eta(t)$ where 
\[
\eta(t)=[1:t:0:ct/a].
\]
In $\P^3_\C$, the point $f\circ \eta(t)$ coincides with $e_3$. In local coordinates near $E^Y_3$, the blow down map $\pi$
may be written as $(s,u_1,u_2)\mapsto [s:su_1:su_2:1]$. In these coordinates, the equation that determines $\beta_3$, namely $cx_0=ax_2$, corresponds to the
equation $c=au_2$ (once one divides both sides by $s$).
Then, one shows that $f_Y\circ\pi^{-1}\circ  \eta(t)$ corresponds 
to $(0, (a^2+ct)/a, c/a)$ when $t\neq 0$. This is a point of $\beta_3$. 
 \end{eg}

\subsubsection{The $\ell$-condition}
The exceptional locus $\Sigma_3^Y$ of $f_Y$ is blown down to the point $p_1\subset \beta_2\subset \Sigma_2^Y$. 
The forward orbit of this point is contained in the invariant cycle of curves $\beta_0\cup\beta_1\cup\beta_2\cup\beta_3$, 
until it reaches an indeterminacy point of $f_Y$ (this may never occur, depending on the values of the parameters $a$
and $c$). We define the sequence $(p_k)$ inductively by $p_{k+1}=f_Y^k(p_{k})$ for all $k\geq 1$ such that $p_1$, $\ldots$, 
$p_k$ does not intersect $\Ind(f_Y)$. If $p_k$ is well defined, one says that the (forward) orbit of $p_1$ is well defined up to 
time $k-1$. For instance, the orbit of $p_1$ is well defined up to time $1$ if $p_1$  is not a point of indeterminacy of $f_Y$. 

Given an integer $\ell\geq 0$, consider the following two steps condition 
\begin{itemize}
\item the orbit of $p_1$ is well defined up to time $4\ell$, 
\item $f_Y^{4\ell}(p_1)=e_0$.
\end{itemize}
This condition is referred to as the {\bf{$\ell$-condition}} in what follows. The $\ell$-condition is just a precise 
formulation of  $f_Y^k(p_1)=e_0$ for some $k>0$, taking into account indeterminacy problems and the fact 
that $k$ must be of the form $4\ell$ for some~$\ell\geq 0$. 

Since $p_1$ is not equal to $e_0$, the $0$-condition is never satisfied. 

The curve $\beta_2$ is parametrized by $t\in \P^1_\C\mapsto \pi^{-1}\circ \eta(t)$ where 
\[
\eta(t)=[1:t:0:ct/a].
\]
The point $p_1$ corresponds to the parameter $t=a$, while $e_0$ corresponds to $t=0$. 
An explicit computation, similar to the one described in Example~\ref{eg:Beta}, shows that the restriction of $f_Y^4$ to $\beta_2$ is induced by
the following translation of the $t$ variable:
\[
h\colon t\mapsto t+ \frac{a^2+c^2 +ac}{c}.
\]
 
\begin{lem}\label{lem:l-cond}
The parameters $(a,c)\in \C^*\times \C^*$ satisfy the $\ell$-condition if and only if
\begin{equation}\label{eq:l-cond}
\ell a^2+(\ell +1) ac + \ell c^2=0.
\end{equation}
\end{lem}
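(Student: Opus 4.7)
The proof reduces to a direct algebraic manipulation of the translation formula stated just before the lemma. Under the parametrization $\eta(t) = [1:t:0:ct/a]$ of $\beta_2$, the point $p_1$ corresponds to $t = a$ while $e_0$ corresponds to $t = 0$. Because the cycle $\beta_2 \dasharrow \beta_3 \dasharrow \beta_0 \dasharrow \beta_1 \dasharrow \beta_2$ returns the orbit of $p_1$ to $\beta_2$ every four steps, the condition $f_Y^{4\ell}(p_1) = e_0$ translates on $\beta_2$ to $h^\ell(a) = 0$, where $h(t) = t + \tau$ with $\tau := (a^2 + ac + c^2)/c$. Since $h^\ell(a) = a + \ell\tau$, one computes
\[
h^\ell(a) = 0 \iff ac + \ell(a^2 + ac + c^2) = 0 \iff \ell a^2 + (\ell+1)ac + \ell c^2 = 0,
\]
which is exactly Equation~\eqref{eq:l-cond}.

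The remaining content of the lemma is the well-definedness of the orbit up to time $4\ell$, which is the subtlety hidden in the $\ell$-condition. First, Equation~\eqref{eq:l-cond} forces $\tau \neq 0$: if $a^2 + ac + c^2 = 0$ then \eqref{eq:l-cond} collapses to $ac = 0$, contradicting $(a,c) \in \C^* \times \C^*$. Hence $h$ is a non-trivial translation and the points $a, a+\tau, \dots, a+(\ell-1)\tau$ are pairwise distinct finite complex numbers. To see that none of them equals $0$, suppose $h^k(a) = 0$ for some $1 \leq k < \ell$; the same computation as above yields $k a^2 + (k+1)ac + k c^2 = 0$, and subtracting this from~\eqref{eq:l-cond} and dividing by $\ell - k > 0$ gives $a^2 + ac + c^2 = 0$. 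Substituting back forces $ac = 0$, a contradiction. Therefore the $\beta_2$-stops of the forward orbit first meet $e_0$ precisely at the $\ell$-th application of $h$.

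The main obstacle I anticipate is ruling out that an intermediate iterate $p_{4k+1}, p_{4k+2}, p_{4k+3}$ with $0 \leq k < \ell$ lands on a different indeterminacy point of $f_Y$ lying on $\beta_3, \beta_0$, or $\beta_1$. However, the derivation of the translation formula $h = f_Y^4\vert_{\beta_2}$ in the style of Example~\ref{eg:Beta} proceeds step by step through $\beta_3, \beta_0, \beta_1$ using analogous rational parametrizations of those curves; the very fact that the composite is a globally defined non-trivial translation of $\P^1$ shows that the intermediate restrictions of $f_Y$ between consecutive $\beta_i$'s are bijective birational maps of $\P^1$ whose only indeterminacy points within the invariant cycle are the four marked points (namely $e_0$ on $\beta_2$ and its analogues on $E_1^Y, E_3^Y, \Sigma_0^Y$). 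Consequently, under~\eqref{eq:l-cond} the orbit flows through the cycle without obstruction until it reaches $e_0$ at step $4\ell$, establishing the $\ell$-condition; conversely, the $\ell$-condition immediately yields $h^\ell(a) = 0$ and hence~\eqref{eq:l-cond}.
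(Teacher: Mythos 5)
The algebraic core of your argument is correct and matches the paper's: the $\ell$-condition restricted to $\beta_2$ reads $h^\ell(a)=0$ with $h(t)=t+\tau$, $\tau=(a^2+ac+c^2)/c$, and $a+\ell\tau=0$ is equivalent to $\ell a^2+(\ell+1)ac+\ell c^2=0$. Your second paragraph (showing $\tau\neq 0$ and that $h^k(a)\neq 0$ for $1\leq k<\ell$ by subtracting the two quadratic relations) is a genuinely useful addition: it makes explicit why the orbit does not return to $e_0$ prematurely, a point the paper leaves implicit.

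The gap is in your final paragraph. You must show that the intermediate points $p_{4k+2}\in\beta_3$, $p_{4k+3}\in\beta_0$, $p_{4k+4}\in\beta_1$ avoid $\Ind(f_Y)$, and you try to deduce this from the fact that the composite $f_Y^4\vert_{\beta_2}$ is a globally defined translation of $\P^1$. That inference is not valid. The maps induced on the curves $\beta_i\cong\P^1$ are birational maps of smooth curves, hence automatically everywhere-defined isomorphisms; their composite being a translation carries no information about whether a given point of $\beta_i$, viewed as a point of the threefold $Y$, lies in the codimension-$\geq 2$ set $\Ind(f_Y)$. A point $p\in\beta_i$ can perfectly well be an indeterminacy point of $f_Y$ even though the induced map $\beta_i\to\beta_{i+1}$ extends across $p$; in that case $f_Y(p)$ is undefined as a point of $Y$ and the orbit terminates — which is exactly the failure mode the $\ell$-condition is designed to exclude. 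The paper closes this by explicit local-coordinate computations: e.g.\ the point of $\beta_2$ with parameter $t\neq 0$ is sent to the point of $\beta_3$ with coordinates $(0,(a^2+ct)/a,\,c/a)$ near $E_3^Y$, which one checks directly is not in $\Ind(f_Y)$, and similarly for the three remaining legs $\beta_3\dasharrow\beta_0\dasharrow\beta_1\dasharrow\beta_2$. Your proof needs those (routine but unavoidable) verifications; the soft argument cannot replace them.
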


\begin{proof}
The $\ell$ condition means that $f^{4\ell}_Y(p_1)=e_0$; this is equivalent to $h^\ell(a)=0$, hence to 
$\ell a^2+(\ell +1) ac + \ell c^2=0$. The only thing that remains to be shown is that the orbit of $p_1$
is well defined up to time $4\ell$ if the parameters $(a,c)$ satisfy Equation~\eqref{eq:l-cond}. 

For this purpose, let us come back to the computation done in Example~\ref{eg:Beta}. 
One sees that the point of $\beta_2$ corresponding to the parameter $t\neq 0$ 
is mapped to the point of $\beta_3$ with coordinates $(s,u_1,u_2)=  (0, (a^2+ct)/a, c/a)$. Thus, 
if $t\neq 0$ (i.e. $\eta(t)\neq e_0$), the image is not an indeterminacy point of $f_Y$. Hence, 
if $h^k(p_1)\neq e_0$ then $f_Y(h^k(p_1))$ is not an indeterminacy point of $f_Y$.

A similar computation along $\beta_3$, $\beta_0$, and $\beta_1$ concludes the proof. 
\end{proof}

\begin{rem}\label{rem:l-cond}
Assume that $(a,c)$ satisfies the $\ell$-condition. Then 
$\alpha=a/c$ is a root of $\ell \alpha^2+ (\ell+1)\alpha+\ell$. The discriminant of this quadratic 
polynomial is 
\[
\delta_\ell= (\ell+1)^2-4\ell^2=-3\ell^2 + 2 \ell +1.
\]
It is positive for $\ell=0$, vanishes for $\ell=1$, and is negative for $\ell \geq 2$. Thus, $a/c$
is not a real number if $\ell \geq 2$.
\end{rem}

\subsubsection{The threefold $X$}
Assume that the $\ell$-condition is satisfied, and blow up the points $p_1$, $p_2$, $\ldots$, $p_{4\ell}$, and $p_{4\ell+1}=e_0$. This
defines a new rational threefold $X$ together with a birational morphism $\tau\colon X\to Y$. The exceptional divisor are
denoted by $P_1$, $\ldots$, $P_{4\ell}$ and $P_{4\ell+1}$. We shall also use the notation $E_0$ for the divisor $P_{4\ell +1}$
because $p_{4\ell+1}=e_0$. By construction, one gets:

\begin{pro}
Let $\ell$ be a positive integer and $(a,c)$ be a pair of non-zero complex numbers that satisfies the $\ell$-condition.
Then, the birational transformation $f_Y$ lifts to a pseudo-automorphism $f_X=\tau^{-1}\circ f\circ \tau$ of the smooth rational 
variety $X$.
\end{pro}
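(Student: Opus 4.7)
The plan is to verify directly that $\Exc(f_X)=\Exc(f_X^{-1})=\emptyset$, by inspecting the image of each prime divisor of $X$ under $f_X$ and under $f_X^{-1}$. The prime divisors of $X$ come in two families: the strict transforms $D^X$ of prime divisors $D\subset Y$, and the new exceptional divisors $P_1,\ldots,P_{4\ell+1}$ produced by $\tau$, with $P_{4\ell+1}=E_0$ sitting above $e_0$. Since $\tau$ is an isomorphism away from the centers $p_1,\ldots,p_{4\ell+1}$, and since the only divisors in $Y$ contracted by $f_Y$ and $f_Y^{-1}$ are $\Sigma_3^Y$ and $\L(\Sigma_0)^Y$ respectively, every strict transform $D^X$ with $D\neq\Sigma_3^Y$ is sent to a prime divisor by $f_X$, and every $D^X$ with $D\neq\L(\Sigma_0)^Y$ is sent to a prime divisor by $f_X^{-1}$. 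Thus the only candidates for contracted divisors are $\Sigma_3^X$, $\L(\Sigma_0)^X$, and the exceptional divisors $P_k$.

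These candidates organize into a single $f_X$-orbit of divisors:
\[
\Sigma_3^X \;\longrightarrow\; P_1 \;\longrightarrow\; P_2 \;\longrightarrow\; \cdots \;\longrightarrow\; P_{4\ell} \;\longrightarrow\; E_0 \;\longrightarrow\; \L(\Sigma_0)^X.
\]
I would first address the middle arrows $P_k\to P_{k+1}$ for $1\le k\le 4\ell$: by the proof of Lemma~\ref{lem:l-cond}, the $\ell$-condition forces each point $p_k$ with $k\le 4\ell$ to avoid both $\Ind(f_Y)$ and $\Sigma_3^Y=\Exc(f_Y)$, so $f_Y$ is a local biholomorphism at $p_k$ sending $p_k$ to $p_{k+1}$. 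Blowing up source and target then produces an induced isomorphism of exceptional fibers $P_k\to P_{k+1}$; in particular, no $P_k$ is contracted by $f_X$ or by $f_X^{-1}$.

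The two endpoints $\Sigma_3^X\to P_1$ and $E_0\to\L(\Sigma_0)^X$ require explicit local computations. For the first, I would choose affine coordinates around a generic point of $\Sigma_3$ and around $p_1$, write $\L\circ\J$ explicitly, and check that after blowing up $p_1$ the two tangential-to-$\Sigma_3$ parameters survive as independent affine coordinates on $P_1\cong\P^2$, so that the lifted map $\Sigma_3^X\to P_1$ is a birational morphism. For the second endpoint the situation is dual: the point $e_0$ is an indeterminacy of $f_Y$ at which $\J$ blows up $e_0$ to $\Sigma_0$ and $\L$ carries $\Sigma_0$ isomorphically onto $\L(\Sigma_0)=\{x_1=ax_0\}$; hence the single blow-up of $e_0$ in $X$ resolves the indeterminacy and yields a birational morphism $E_0\to\L(\Sigma_0)^X$. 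The corresponding assertions for $f_X^{-1}$ follow by reading the chain in reverse, and are formally identical.

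The principal obstacle is this endpoint analysis, namely the verification that a single blow-up at $p_1$ (respectively at $e_0$) already resolves the contraction of $\Sigma_3^Y$ to $p_1$ (respectively the indeterminacy at $e_0$) to a birational map on divisors. This is the three-dimensional analogue of the classical fact that a single blow-up at each vertex of the coordinate tetrahedron resolves the face-to-vertex contractions of $\J$, and it can be dispatched by short computations in local charts. Once these are recorded, every prime divisor of $X$ is sent to a prime divisor under both $f_X$ and $f_X^{-1}$, so $\Exc(f_X)=\Exc(f_X^{-1})=\emptyset$ and $f_X$ is a pseudo-automorphism of $X$.
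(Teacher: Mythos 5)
Your proposal is correct and follows essentially the same route as the paper, which states the proposition with only the words ``By construction'' and relies on the preceding orbit analysis (Lemma~\ref{lem:l-cond}) together with the local-chart computations of Appendix~I --- precisely the chain $\Sigma_3^X\to P_1\to\cdots\to E_0\to\L(\Sigma_0)^X$ and the two endpoint resolutions you describe. The only point worth recording explicitly is that the points $p_k$, $k\leq 4\ell$, avoid $\Sigma_3^Y$ as well as $\Ind(f_Y)$: a point of $\Sigma_3^Y\setminus\Ind(f_Y)$ is sent to $p_1$, which would make the orbit periodic and prevent it from ever reaching $e_0$, contradicting the $\ell$-condition.
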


\subsubsection{Action of $f_X$ on $\Pic(X)$}

One denotes by $\Sigma_i^X$ the strict transform of $\Sigma_i^Y$ in $X$. Then, by definition,
\begin{itemize}
\item $E_i$, $1\leq i\leq 3$,  is the strict transform of $E_i^Y$ in $X$;
\item ${\hat{E}}_1$ and  ${\hat{E}}_3$ are the total transforms of $E_1^Y$ and $E_3^Y$ respectively. As divisors, 
they are equal to 
\begin{eqnarray*}
{\hat{E}}_1 & = & E_1 + P_4 + P_8 + \ldots + P_{4\ell} \\
{\hat{E}}_3 & = & E_3 + P_2 + P_6 + \ldots + P_{4\ell -2}.
\end{eqnarray*}
\end{itemize}
Let $H$ be the total transform of a hyperplane of $\P^3_\C$ under $\pi\circ \tau$. Then, the classes of $H$, 
${\hat{E}}_1$, $E_2$, ${\hat{E}}_3$, and the $P_j$, $4\ell +1 \geq j \geq  1$, determine a basis of $\Pic(X)$. 
We denote by $\Gamma\subset X$ the strict transform of $\Gamma^Y$: the divisor $\Gamma$ is the sum
$\Sigma_0^X+E_1+\Sigma_2^X+ E_3$. 

\begin{pro}\label{pro:picard-action}
The linear transformations  $f_X^*$ and $(f_X^{-1})^*$ of $\Pic(X)$  are given by
\[
(f_X)^*\,   \colon \left\{ 
\begin{array}{ccl} 
[H] & \mapsto  &  3[H] - 2 [{\hat{E}}_1]  - 2 [E_2] - 2 [{\hat{E}}_3] -2[P_{4\ell+1}] \\ 
  {[{\hat{E}}_1]} & \mapsto & [H] - [{\hat{E}}_1]  - [E_2] - [{\hat{E}}_3]\\ 
  {[E_2]} & \mapsto & [H] - [E_2] - [{\hat{E}}_3] - [P_{4\ell+1}] \\ 
  {[{\hat{E}}_3]} & \mapsto & [H] - [{\hat{E}}_1] - [{\hat{E}}_3] - [P_{4\ell+1}]\\ 
{[P_{j}]} &\mapsto & [P_{j-1}]   \quad\quad  2\leq j\leq 4\ell +1 \\     
{[P_1]} & \mapsto &  [H] - [{\hat{E}}_1] - [E_2] - [P_{4\ell+1}]
\end{array} \right.
\]
and
\[
(f_X^{-1})^*  \colon \left\{ 
\begin{array}{ccl} 
{[H]} & \mapsto  &  3 [H] - 2 [{\hat{E}}_1]  - 2 [E_2] - 2 [{\hat{E}}_3 ] -2[P_{1}] \\
{[{\hat{E}}_1]} & \mapsto & [H] - [{\hat{E}}_1]  -  [{\hat{E}}_3] - [P_1] \\
{[E_2]} & \mapsto & [H] - [{\hat{E}}_1] - [E_2] - [P_{1}] \\
{[{\hat{E}}_3]} & \mapsto & [H] - [{\hat{E}}_1] - [E_2] -  [{\hat{E}}_3]\\
{[P_j]} & \mapsto & [P_{j+1}]    \quad\quad 1\leq j\leq 4\ell \\                       
{[P_{4\ell+1}]} & \mapsto & [H] - [E_2] - [{\hat{E}}_3] - [P_{1}]
\end{array} \right.
\]
\end{pro}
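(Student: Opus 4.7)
The plan is to compute $f_X^*$ column-by-column on the basis $[H]$, $[\hat{E}_1]$, $[E_2]$, $[\hat{E}_3]$, $[P_1]$, $\ldots$, $[P_{4\ell+1}]$. Since $f_X$ is a pseudo-automorphism, for any prime divisor $D$ the class $f_X^*[D]$ is represented by the strict transform of $D$ under $f_X^{-1}$. The geometric inputs I would use are already recorded in the preceding paragraphs: (i) $f=\L\circ\J$ sends each plane $\Sigma_{i-1}$ to the vertex $e_i$ for $i=1,2,3$, and sends $\Sigma_3$ to the point $p_1$; (ii) along the invariant cycle $\beta_0\cup\beta_1\cup\beta_2\cup\beta_3$, $f_X$ maps $p_{k}$ to $p_{k+1}$ with $p_{4\ell+1}=e_0$, so in $X$ the exceptional divisor $P_k$ is sent birationally onto $P_{k+1}$.

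First I would compute $f_X^*[H]$. Because $\L$ is linear, $\L^*[H]=[H]$, while $\J$ pulls a generic hyperplane back to the cubic surface $\sum a_i\prod_{j\neq i}x_j=0$, which has a node at each of $e_0,e_1,e_2,e_3$. A generic such cubic does not pass through any of the orbit points $p_1,\ldots,p_{4\ell}$, so its strict transform in $X$ has class
\[
f_X^*[H]=3[H]-2[\hat{E}_1]-2[E_2]-2[\hat{E}_3]-2[P_{4\ell+1}].
\]

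Next I would handle the exceptional classes. By (ii), $f_X^*[P_j]=[P_{j-1}]$ for all $2\le j\le 4\ell+1$. For $[P_1]$, fact (i) gives that $f_X$ sends $\Sigma_3^X$ birationally onto $P_1$, so $f_X^*[P_1]=[\Sigma_3^X]$; expressing this in the basis (using that $\Sigma_3$ passes smoothly through $e_0,e_1,e_2$ and meets none of $p_1,\ldots,p_{4\ell}$) yields exactly $[H]-[\hat{E}_1]-[E_2]-[P_{4\ell+1}]$. For $i=1,2,3$, fact (i) gives $f_X^*[E_i]=[\Sigma_{i-1}^X]$, where $E_i$ is the strict transform of $E_i^Y$. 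Combining this with $f_X^*[P_j]=[P_{j-1}]$ and the decomposition $\hat{E}_1=E_1+P_4+P_8+\cdots+P_{4\ell}$, the terms $[P_3],[P_7],\ldots,[P_{4\ell-1}]$ that appear with a minus sign in $[\Sigma_0^X]$ (because $\beta_0\subset\Sigma_0^Y$ passes through the orbit points $p_3,p_7,\ldots,p_{4\ell-1}$) cancel exactly against the $f_X^*[P_{4k}]=[P_{4k-1}]$ contributions, producing the clean formula for $f_X^*[\hat{E}_1]$. An analogous cancellation involving $\hat{E}_3=E_3+P_2+\cdots+P_{4\ell-2}$ and the orbit on $\beta_2\subset\Sigma_2^Y$ yields $f_X^*[\hat{E}_3]$. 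Since no orbit point lies on $E_2^Y$ nor on $\Sigma_1^Y$, the case $E_2=\hat{E}_2$ simply gives $f_X^*[E_2]=[\Sigma_1^X]=[H]-[E_2]-[\hat{E}_3]-[P_{4\ell+1}]$.

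Finally, the formulas for $(f_X^{-1})^*$ follow either by repeating the argument for $f_X^{-1}$, whose geometry is entirely parallel (with $\L(\Sigma_i)$ playing the role of $\Sigma_i$, the cycle $\beta_i$ traversed in reverse, and $p_1$ playing the role of $e_0$), or by directly inverting the matrix of $f_X^*$ on the given basis; the pseudo-automorphism property guarantees that both procedures agree. The main obstacle is the bookkeeping in the previous paragraph: using the explicit parametrizations of the $\beta_i$ recorded in Example~\ref{eg:Beta} one must verify that no intermediate orbit point $p_j$ (with $1\le j\le 4\ell$) spuriously lies on any plane $\Sigma_k$ beyond the forced incidences at the blown-up vertices, so that the claimed expressions for the strict transforms $[\Sigma_k^X]$ in the basis are correct and the cancellations above are exact.
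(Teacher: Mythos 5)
Your proposal is correct and follows essentially the same route as the paper: since $f_X$ is a pseudo-automorphism one has $f_X^*[B]=[f_X^{-1}B]$ for each basis divisor, the images are read off from the known contractions $\Sigma_{i-1}\mapsto e_i$, $\Sigma_3\mapsto p_1$ and the orbit $p_j\mapsto p_{j+1}$, and the coefficients are the multiplicities of the resulting hypersurfaces at the (possibly infinitely near) blown-up centers. The only differences are cosmetic: the paper computes in the prime-divisor basis and changes to the $[\hat{E}_j]$ basis at the end, deferring the multiplicity computations to Appendix I, whereas you track the cancellations in the $[\hat{E}_j]$ basis directly and explicitly flag the verification the paper leaves implicit (that no orbit point $p_j$ lies on a plane $\Sigma_k$ or on the relevant tangent-cone curves beyond the forced incidences).
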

 
\begin{proof}
Let us start with the basis of $\Pic(X)$ consisting of  $[H]$, the $[E_j]$'s and the $[P_k]$'s, which are the prime blow-up divisors.  Let $B$ be a divisor corresponding to one of these basis elements.  Since $f_X$ is a pseudo-automorphism, $f_X^{-1}$  will be a local diffeomorphism in a neighborhood of the generic point of $B$.  Thus $f_X^{-1}B$ will be an irreducible hypersurface, and we will have  $f_X^*[B] = [f^{-1}_X B]$.   It now remains to express the divisor  $[f^{-1}_X B]$ in terms of our basis.

There are two cases.  First, suppose that $f_X^{-1}B$ lies over a point  $p\in {\bf P}^3$.  Then it will be one of the basis divisors $B'$, and  we have $[f^{-1}_X B] = [B']$.  

The other case is that $f^{-1}_X B$ projects to an irreducible hypersurface $\pi(f^{-1}_X B)\subset {\bf P}^3$ of some degree $\delta$.   We will then have $[f^{-1}_XB]= \delta [H] - \sum \mu_S [S]$, where the sum is taken over all basis elements $[S]$ which project to a point of $\pi(f^{-1}_X B)$.  The multiplicities $\mu_S$ will be  nonnegative integers, which remain to be determined.   The procedure of determining the multiplicity is relatively routine; an example  is given in the Appendix I.

This then gives $f_X^*$ in terms of the basis described in the first line of the proof.  We now change basis, replacing $[E_j]$ by $[\hat E_j]$, and find the formula above.  
\end{proof}

\begin{cor} If the $\ell$-condition is satisfied, then:
\begin{enumerate}
\item The characteristic polynomials of $f_X^*$ and $(f_X^{-1})^*$ are both equal to 
\[
\chi_{\ell}(x)= x^{4\ell+1}(x^4-x^2-x-1)+ x^4+x^3+x^2-1.
\]
\item The factor $(x^4-1)$ divides $\chi_\ell(x)$, and $x=1$ is a simple root of $\chi_\ell$.
\item $\lambda_1(f)=\lambda_2(f)=1$ if $\ell=1$, and $\lambda_1(f)=\lambda_2(f)>1$ is a Salem number if
$\ell \geq 2$.
\end{enumerate}
In particular, the pseudo-automorphism $f_X$ is not cohomologically hyperbolic.
\end{cor}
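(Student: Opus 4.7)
The plan is to reduce all three claims to explicit computations with the matrices of Proposition~\ref{pro:picard-action}, together with elementary algebra on $\chi_\ell$. Order the basis of $\Pic(X)$ as $([H],[\hat E_1],[E_2],[\hat E_3],[P_1],\ldots,[P_{4\ell+1}])$ and write the matrix $A$ of $(f_X)^*$ in block form $\bigl(\begin{smallmatrix}B & C\\ D & S\end{smallmatrix}\bigr)$, where $B$ is the $4\times 4$ block on the first four vectors and $S$ is the twisted shift $[P_j]\mapsto [P_{j-1}]$ for $j\ge 2$, $[P_1]\mapsto -[P_{4\ell+1}]$. A direct verification gives $S^{4\ell+1}=-I$, hence $\det(xI-S)=x^{4\ell+1}+1$ and the last column of $(xI-S)^{-1}$ equals $(x^{4\ell+1}+1)^{-1}(1,x,\ldots,x^{4\ell})^{\!\top}$. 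By Proposition~\ref{pro:picard-action}, $D$ is nonzero only in the $[P_{4\ell+1}]$-row and $C$ only in the $[P_1]$-column, so $C(xI-S)^{-1}D$ is a rank-one matrix with common denominator $x^{4\ell+1}+1$. The Schur-complement identity
\[
\det(xI-A)=(x^{4\ell+1}+1)\det\bigl(xI_4-B-C(xI-S)^{-1}D\bigr)
\]
then reduces Part~(1) to expanding a single $4\times 4$ determinant, which after clearing denominators produces $\chi_\ell(x)=x^{4\ell+1}(x^4-x^2-x-1)+x^4+x^3+x^2-1$. Part~(1) for $(f_X^{-1})^*$ follows either from a parallel Schur computation with the second matrix of Proposition~\ref{pro:picard-action}, or more cheaply from the anti-reciprocal identity $x^{4\ell+5}\chi_\ell(1/x)=-\chi_\ell(x)$, immediate from the closed form, combined with $(f_X^{-1})^*=((f_X)^*)^{-1}$.

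Part~(2) is direct substitution into the closed form: $\chi_\ell(\pm 1)=0$ is immediate, and $\chi_\ell(\pm i)=0$ uses $i^{4\ell+1}=i$. Hence $(x^4-1)\mid\chi_\ell$. Differentiating gives $\chi_\ell'(1)=(4\ell+5)-(4\ell+3)-(4\ell+2)-(4\ell+1)+4+3+2=8(1-\ell)$, which is nonzero for $\ell\ge 2$, so $x=1$ is a simple root of $\chi_\ell$ in that range.

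For Part~(3), set $\psi_\ell(x)=\chi_\ell(x)/(x^4-1)$. Anti-reciprocity of $\chi_\ell$ together with $x^{4}\cdot(1/x^4-1)=-(x^4-1)$ gives $x^{4\ell+1}\psi_\ell(1/x)=\psi_\ell(x)$, so $\psi_\ell$ is reciprocal and its roots pair as $\{\mu,1/\mu\}$. For $\ell=1$ a hand factorisation yields $\chi_1(x)=(x-1)^3(x+1)^2(x^2+1)(x^2+x+1)$, whose roots all have modulus one; thus $\lambda_1(f)=1$, and $\lambda_1(f)=\lambda_2(f)$ follows from the Serre-duality identity $\lambda_p(f)=\lambda_{n-p}(f^{-1})$ valid for pseudo-automorphisms together with anti-reciprocity. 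For $\ell\ge 2$, the odd-degree reciprocal polynomial $\psi_\ell$ has $-1$ as a root, so $\psi_\ell(x)=(x+1)Q_\ell(x)$ with $Q_\ell$ reciprocal of even degree $4\ell$; the substitution $z=x+x^{-1}$ converts $Q_\ell$ into a real polynomial $\widetilde Q_\ell(z)$ of degree $2\ell$, whose roots in $(-2,2)$ correspond to unit-circle pairs of roots of $Q_\ell$. A sign-change analysis on $\widetilde Q_\ell$ isolates exactly one real root $z_0>2$, corresponding to a real root $\lambda>1$ of $\chi_\ell$ paired with $\lambda^{-1}<1$, while all remaining roots of $Q_\ell$ lie on the unit circle. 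Ruling out cyclotomic factors in the minimal polynomial of $\lambda$—by checking the finite list of cyclotomic polynomials of degree at most $4\ell$ against $\psi_\ell$—then shows $\lambda$ is a Salem number. The main obstacle is this last step: uniformly controlling the exterior root of $\widetilde Q_\ell$ across all $\ell\ge 2$ and excluding cyclotomic factors from the relevant minimal polynomial; the rest of the argument is elementary matrix algebra and polynomial arithmetic.
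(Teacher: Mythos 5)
Your treatment of parts (1) and (2) is sound. The Schur-complement organization of the characteristic polynomial computation (using $S^{4\ell+1}=-I$ and the rank-one coupling through the single entry $(x^{4\ell+1}+1)^{-1}$ of $(xI-S)^{-1}$) is a cleaner bookkeeping device than anything the paper spells out, and your derivation of the anti-reciprocity $x^{4\ell+5}\chi_\ell(1/x)=-\chi_\ell(x)$, the divisibility by $x^4-1$, and the value $\chi_\ell'(1)=8(1-\ell)$ are all correct. (Your observation that $x=1$ is simple only for $\ell\geq 2$ is accurate; at $\ell=1$ one has $\chi_1=(x-1)^3(x+1)^2(x^2+1)(x^2+x+1)$, consistent with your factorization, so the corollary's item (2) implicitly assumes $\ell\geq 2$.) The reduction of $\lambda_1(f)=\lambda_2(f)$ to the duality $\lambda_2(f)=\lambda_1(f^{-1})$ plus equality of the two characteristic polynomials matches the paper's Lemma 2.1.

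The genuine gap is in part (3) for $\ell\geq 2$, and you have correctly identified where it is: after passing to $Q_\ell$ and substituting $z=x+x^{-1}$, you assert that a ``sign-change analysis'' shows $\widetilde Q_\ell$ has exactly one root $z_0>2$ and that \emph{all remaining roots of $Q_\ell$ lie on the unit circle}. That last assertion is the entire content of the Salem claim, and nothing in your proposal establishes it: a priori $\widetilde Q_\ell$ could have non-real roots (giving quadruples of roots of $Q_\ell$ off both the unit circle and the real axis), and exhibiting $2\ell-1$ sign changes of $\widetilde Q_\ell$ on $[-2,2]$ uniformly in $\ell$ is not a routine verification. The paper closes this gap with one identity and one citation: it checks that
\[
\frac{\chi_\ell(x)}{x+1}=x^{4\ell+1}T(x)-x^{3}T(x^{-1}),\qquad T(x)=x^3-x^2-1,
\]
where $T$ is the minimal polynomial of a Pisot number $\alpha_T\simeq 1.46$; Salem's classical construction (Bertin et al., Theorem 1.1.1) then guarantees that the largest root of such a polynomial is a Salem number as soon as $4\ell+1>2(\alpha_T+1)/(\alpha_T-1)\simeq 10.7$, i.e.\ for $\ell\geq 3$, and the case $\ell=2$ is settled by a direct check that the golden-mean-adjacent root satisfies the degree-$8$ Salem polynomial $x^8-x^7-x^5+x^4-x^3-x+1$. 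You would need either to import this theorem (in which case your root-counting machinery becomes unnecessary) or to supply the missing uniform root-localization argument, which you have not done.
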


\begin{proof}
The characteristic polynomial is easily obtained from the previous proposition. It is divisible by
$(x+1)$, and 
\[
\frac{\chi_{\ell}(x)}{x+1}= x^{4\ell+1}T(x)- x^3T(x^{-1})
\]
with $T(x)= x^3-x^2-1$. The cubic polynomial $T$ has one real root $\alpha_T>1$ and two complex 
conjugate roots of modulus $<1$. Thus, $\alpha_T\simeq 1.46$ is a Pisot number of degree $3$. This implies that (i) the largest
root of $\chi_{\ell}$ is a Salem number $\lambda_\ell$ as soon as $4\ell + 1 > 2\frac{\alpha_T+1}{\alpha_T-1}\simeq  10.7$, hence as soon as $\ell \geq 3$, and that (ii) $\lambda_\ell$
converges towards $\alpha_T$ when $\ell$ goes to $+\infty$. (see \cite{Bertin:1989}, Theorem 1.1.1, page 16) 

For $\ell=2$, one checks directly that $\lambda_\ell$ is a Salem number of degree 8, solution of the equation
$
x^8-x^7-x^5+x^4-x^3-x+1=0.
$
\end{proof}

\subsubsection{Invariant hypersurfaces}\label{par:InvHyper}

In $\Pic(X)$, the eigenspace of $f^*_X$ for the eigenvalue $1$ coincides with the line
generated by the class of $\Gamma$, i.e. by 
\[
[\Gamma]=2H - {\hat{E}}_1 - E_2 - {\hat{E}}_3 - P_{4\ell+1} - \ldots - P_1.
\] 
It follows that if $S\subset \P^3$ is an $f$-invariant hypersurface and $P$ is a homogeneous equation of $S$,
then $P$ is a polynomial of degree $2m$, for some $m>0$, and vanishes with multiplicity $m$ at each $e_i$; 
moreover, the $f$-invariance is equivalent to 
\[
P\circ F = \lambda ({\rm{Jac}}(F))^m P
\]
for some $\lambda\in \C^*$, where $F$ is the lift of $f$ to $\C^4$ defined by
\[
F(x_0, x_1, x_2,x_3)= (x_0x_1x_2 , \; x_1x_2x_3+ax_0x_1x_2, \;  x_0x_2x_3,  \;  x_0x_1x_3+ cx_0x_1x_2).
\]
 and ${\rm{Jac}}(F)=x_0x_1x_2x_3$ is its
jacobian determinant.

The class $[\Gamma]$ coincides with $-(1/2)[K_X]$, where $K_X$ is the canonical divisor. 
Consider, on the projective space $\P^3_\C$, the rational volume form 
\[
\Omega:=d\left(\frac{1}{x_0}\right)\wedge dx_1 \wedge d\left(\frac{1}{x_2}\right).
\]
This form does not vanish, and its poles are the two faces $\Sigma_0$ and $\Sigma_2$ of the tetrahedron $\Delta$.
Moreover, $\L^*\Omega=dx_2\wedge dx_0\wedge d(1/x_1)$. Hence, 
\[
f^*\Omega= \Omega.
\] 

Defining $\Omega_X=\tau^*(\pi^*(\Omega))$, one gets a rational section of $K_X$ with poles along $\Gamma$
(of multiplicity $2$) and no zeros.

\begin{rem}
Let $M$ be a smooth projective variety and $f$ be a pseudo-auto\-mor\-phism of $M$.
Since the Jacobian determinant of $f$ does not vanish on the complement of $\Ind(f)$, 
the linear operator $f^*$ preserves the canonical class. 
\end{rem}


\section{Behavior on the invariant surface}
\subsection{The action of $f_X$ on $\Gamma$}

Let $g$ be the restriction of $f^4$ to the invariant plane $\Sigma_0$. In coordinates, we have 
\begin{eqnarray*}
g\colon [x_1:x_2:x_3] & \mapsto & [ (cx_1x_2+x_1x_3 + cx_2x_3) \times (cx_1x_2+x_1x_3+a x_2 x_3 +cx_2x_3)  \\
     & &:\;  x_2 x_3 \times (cx_1x_2+x_1x_3 + cx_2x_3) \\
     & &:\;  x_3\times  (ax_2 + cx_2 + x_3) \times (cx_1x_2+x_1x_3+a x_2 x_3 +cx_2x_3)]
\end{eqnarray*}
Hence 
\[ g [x_1:x_2:x_3] = [Q(Q+ax_2x_3) : x_2 x_3 Q : x_3 (x_3+(a+c)x_2) (Q+ax_2x_3)]
\]
where $Q$ denotes the quadratic polynomial function $Q = cx_1x_2+ x_1x_3 + cx_2x_3$. In particular, the degree of $g$ is $4$ (i.e., the pre-image of a line is a curve of degree $4$). 

The inverse of $g$ is the birational map 
\[
g^{-1}\colon [x_1:x_2:x_3]  \mapsto [x_1(x_1-(a+c)x_2)(N-cx_1x_2):x_1x_2 N: N(N-cx_1x_2)]
\]
with $N=x_1x_3-ax_2x_3-ax_1x_2$.

\subsubsection{Exceptional locus}

The exceptional locus $\Exc(g)\subset \Sigma_0$ 
is the union of $4$ curves $L_1$, $L_2$, $L_3$, and $L_4$, defined as follows:
\begin{itemize}
\item $L_1$ is the line $\{x_3=0\}$ through the  two points $e_1$ and $e_2$; hence $L_1=\Sigma_{03}$ is an edge
of the tetrahedron $\Delta$; its image under the action of $g$ is $e_1$. 
\item $L_2$ is the line $\{cx_2+x_3=0\}$; it is mapped onto the point $p_3=[1:(a+c)^{-1}:a/c]$ (this point is on $\beta_0$
and is the image of $p_1$ by $f_Y^2$). 
\item $L_3$ is the conic $\{cx_1x_2+x_1x_3+cx_2x_3 =0\}$; its image is the point $e_3$.
\item $L_4$ is the conic $\{ cx_1x_2+x_1x_3+cx_2x_3 +ax_2x_3 =0\}$; its image is $e_2$ (the intersection point of $\beta_0$ and $L_1=\Sigma_{03}$).
\end{itemize}

\begin{rem}\label{rem:exc-loc-g}
In affine coordinates $(x_2,x_3)$ near the point $e_1=[1:0:0]$, the equations of $L_2$, $L_3$, $L_4$ have the same
$1$-jet $cx_2+x_3$. This means that their strict transforms intersect the blow-up $E_1$ of $e_1$ at the same point. 
\end{rem}

\subsubsection{Indeterminacies}

The indeterminacy locus of $g$ is 
\[
\Ind(g)= \{e_1, e_2, e_3, p_{4\ell-1}\},
\]
with $p_{4\ell-1}=[c(a+c): -a : a (a+c)]$. These points are equal to the strict transforms of $\{N=0\}$, 
$\{N-cx_1x_2=0\}$, $\{x_1=0\}$, and $\{x_1=ax_2\}$ under the action of $g^{-1}$.

\subsubsection{The surface $W$}\label{par:surf-W}
Let us blow up $\Gamma$ at the $\ell+3$ points $e_1$, $e_2$, $e_3$ and $p_3$, $p_7$, $p_{11}$, ..., up to $p_{4\ell-1}$. We obtain a new surface
$W$, together with a birational morphism $\epsilon\colon W\to \Sigma_0$. We denote by $E_i$ the exceptional divisor above 
$e_i$, for $1\leq i \leq 3$, and by $F_j$ the exceptional divisor above $p_{4j-1}$, for $1\leq j\leq \ell$. The transformation $g$
lifts to a birational transformation $g_W$ of $W$, which acts as follows on these curves:
\begin{itemize}
\item $g_W$ maps $E_1$ onto $E_1$ and $E_3$ onto $E_3$;
\item it maps $L_1$ to a point of $E_1$, and $L_3$ to a point of $E_3$. Moreover, the forward orbit
of these points form a sequence of points of $E_1$ (resp. $E_3$) that do not intersect $\Ind(g_W)$. 
\item $g_W$ maps $L_4$ onto $E_2$ and then $E_2$ onto the strict transform of the conic 
$\{ a x_1 x_2 + cx_1 x_2 - x_1 x_3 +ax_2x_3=0\}$.
\item it maps $L_2$ onto $F_1$, then onto $F_2$, ... , onto $F_{\ell}$ and then onto the strict transform
of the line $\{ ax_2-x_1=0\}$. 
\end{itemize}

\begin{proof} Let us prove the two assertions concerning $E_1$ and $L_1$; the remaining ones are proved
along similar lines.

There are local coordinates $(u,v)$ near $E_1$ such that $\epsilon(u,v)=[1:u:uv]$, with $E_1=\{u=0\}$.
In these coordinates, $g\circ \epsilon(u,v)$ is equal to 
\[
 \left[ 1:\; \frac{uv}{c+v+cuv+auv}:\; \frac{uv}{c+v+cuv+auv}\frac{(a+c+v)(c+v+cu+auv)}{c+v+cuv} \right] .
\]
This implies that $g_W$ maps $E_1$ onto $E_1$, as mentioned above, transforming the $v$-coordinate into $v'=v+(a+c)$. 
There is a unique indeterminacy point of $g_W$ on the curve $E_1$; it corresponds to the coordinates $(u,v)=(0,-c)$, and 
coincides with the common intersection point of $E_1$ with the strict transforms of the curves  $L_2$, $L_3$, $L_4\subset\Exc(g)$ 
(see Remark~\ref{rem:exc-loc-g}). The strict transform of the line $L_1$ intersects $E_1$ at the point $(u,v)=(0,0)$; at that point, 
$g_W$ is regular, and behaves like $(u,v)\mapsto (uv/c+{\rm {h.o.t.}}, uv(a+c)/c+{\rm {h.o.t.}})$. 

Moreover, $g[x_1:x_2:x_3]$ is equal to 
\[
\left[ 1:\; \frac{x_2x_3}{Q + a x_2 x_3}: \; \frac{x_3((a+c)x_2+x_3)}{Q} \right]
\]
where $Q=cx_1x_2+x_1x_3+cx_2x_3$. This means that $\pi^{-1}\circ g$ maps the point $[x_1:x_2:x_3]$ to the point $(u,v)$
with 
\[
u= \frac{x_2x_3}{Q + a x_2 x_3}, \quad v= \frac{((a+c)x_2+x_3)(Q+ax_2x_3)}{x_2 Q}.
\] 
Parametrizing the line $L_1$ by $[s:t:0]$, with $[s:t]$ in the projective line, we obtain $Q(s,t,0)=cst$ and
\[
u=0, \quad v= \frac{((a+c)x_2+x_3)(Q+ax_2x_3)}{x_2 Q} = \frac{(a+c)t}{t}=a+c.
\]
This means that the strict transform $L_1'$ is mapped onto the point $(u,v)=(0,a+c)$ of $E_1$. Then, its 
orbit is contained in $E_1$ and corresponds to the infinite sequence of points with coordinates $v_n=n(a+c)$, $n\geq 1$.

Since $(a,c)$ satisfies the $\ell$-condition, we have $\ell a^2 + (\ell+1)ac + \ell c^2=0$; in particular, $a/c$ is not a real 
number (see Remark~\ref{rem:l-cond}). If, for some $n> 0$, $v_n$ coincides with the unique indeterminacy
point $\{v=-c\}$, then $na+(n+1)c=0$ and $a/c$ is the rational number $-(n+1)/n$. Thus, $g_W$ is well defined all along the
positive orbit $(v_n)$. This means that none of the points $g_W^n(L_1')$, $n\geq 1$, is indeterminate; thus, if algebraic stability
fails for $g_W$, this is not because $L_1'$ is eventually mapped to an indeterminacy point.    \end{proof}

Similarly, one proves that $g_W^{-1}$ preserves the curve $E_1$, acts by a translation $v\mapsto v-(a+c)$ on $E_1$, and has
a unique indeterminacy point on $E_1$, namely $v=a+c$. In particular, the backward orbit of the curve $L_1$ is a sequence of curves that intersect $E_1$ along the points of parameters $-n(a+c)$. This implies, that the curves $g^{n}(L_1)$, $n\geq 1$ are pairwise distinct, and that 
none of them is contained in $\Ind(f^{\pm k})$, $k\leq 4$.

\subsubsection{Action on $\Pic(W)$}
Recall that a birational transformation $h$ of a surface $Z$ is {\bf{algebraically stable}} if $(h^*)^n=(h^n)^*$ on $\Pic(Z)$
for all integer $n$ or, equivalently, $h^n$ does not contract a curve of $\Exc(h)$ on an indeterminacy point of $h$
for all $n\geq 1$ (see \cite{Diller-Favre:2001}).

To describe the action of $g_W$ on the Picard group of $W$, we make use of the basis given by the class $L$
of (the total transform of) a line, and the classes of the exceptional divisors $E_i$ and $F_j$. We denote by $L_i'$ 
the strict transform of $L_i$ in $W$.  From the previous description of the action of $g_W$, and similar properties
for $g_W^{-1}$, one gets the following lemma.

\begin{lem}\label{lem:g-on-pic}
The transformation $g_W\colon W\dasharrow W$ is algebraically stable. Its action on $\Pic(W)$ is given by 
\[
(g_W)^*\colon\left\{ 
\begin{array}{ccl}
{[L]} & \mapsto & 4 [L]- 2 [E_1] -2[E_2] -[E_3] - [F_\ell] \\
{[E_1]} & \mapsto & [L_1'] + [E_1] = [L]-[E_2] \\
{[E_2]} & \mapsto & [L_4'] = 2[L] - [E_1] -[E_2] -[E_3] \\
{[E_3]} & \mapsto & [L_3'] + [E_3] = 2[L] - [E_1] - [E_2] - [F_\ell] \\
{[F_j]} & \mapsto & [F_{j-1}], \; {\mathrm{ for }} \; \ell\geq j \geq 2\\
{[F_1]} & \mapsto & [L_2']=[L] - [E_1] 
\end{array}
 \right.
\]
\end{lem}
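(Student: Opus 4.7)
The plan is to treat the two claims — algebraic stability and the explicit formulas for $g_W^*$ — separately, both by direct geometric inspection of $g_W$ using the picture built in Section~\ref{par:surf-W}. For algebraic stability I would apply the Diller--Favre criterion (see \cite{Diller-Favre:2001}): $g_W$ is algebraically stable if and only if no iterate of $g_W$ sends an irreducible exceptional curve into $\Ind(g_W)$. The four exceptional curves are $L_1'$, $L_2'$, $L_3'$, $L_4'$. Section~\ref{par:surf-W} has already carried out the calculation for $L_1'$: its forward orbit is the sequence $v_n = n(a+c)$ in $E_1$, and hitting the unique indeterminacy point $v=-c$ would force $a/c=-(n+1)/n \in \Q$, contradicting the $\ell$-condition when $\ell \geq 2$ (Remark~\ref{rem:l-cond}). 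The same translation argument, with $E_1$ replaced by $E_3$, handles $L_3'$; the curve $L_2'$ runs through the finite regular chain $L_2' \to F_1 \to F_2 \to \cdots \to F_\ell \to \{ax_2 = x_1\}'$, and $L_4' \to E_2 \to \{\text{conic}\}'$ is similarly regular.

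For the action on $\Pic(W)$, I would use that $g_W^{-1}$ is a local diffeomorphism at the generic point of each prime divisor (since $g_W$ is a pseudo-automorphism in codimension $1$), so $g_W^*[B] = [g_W^{-1}B]$ for any prime divisor $B$. From the list of image-curves in Section~\ref{par:surf-W} one reads off $g_W^{-1}E_1 = E_1 \cup L_1'$, $g_W^{-1}E_3 = E_3 \cup L_3'$, $g_W^{-1}E_2 = L_4'$, $g_W^{-1}F_j = F_{j-1}$ for $j \geq 2$, and $g_W^{-1}F_1 = L_2'$. Each strict transform is then rewritten in the basis $\{[L],[E_i],[F_j]\}$ by determining which blown-up points lie on the underlying plane curve: $L_1 = \Sigma_{0,3}$ is a line through $e_1, e_2$; $L_2$ is a line through $e_1$ only; $L_4$ is a conic through $e_1, e_2, e_3$ and none of the $p_{4j-1}$; and $L_3$ is a conic through $e_1, e_2, e_3$ which, crucially, also passes through $p_{4\ell-1}$, as one verifies by evaluating $cx_1x_2+x_1x_3+cx_2x_3 = ac(a+c)[-c+(a+c)-a]=0$ at $p_{4\ell-1}=[c(a+c):-a:a(a+c)]$. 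The formulas for $g_W^*[E_i]$ and $g_W^*[F_j]$ stated in the lemma fall out of these identifications.

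Finally, for $g_W^*[L]$ the algebraic degree $\deg(g)=4$ (read off from the explicit quartic expression at the start of Section~\ref{par:surf-W}) forces $g_W^*[L] = 4[L] - \sum_i m_i[E_i] - \sum_j n_j [F_j]$, and the coefficients are the orders of vanishing at the corresponding points of the generic pullback $g^*(\alpha x_1 + \beta x_2 + \gamma x_3)$. Short affine expansions around $e_1, e_2, e_3$ using $Q = cx_1x_2+x_1x_3+cx_2x_3$ yield $(m_1,m_2,m_3) = (2,2,1)$; an analogous expansion at $p_{4\ell-1}$ gives $n_\ell=1$, and the remaining $p_{4j-1}$ ($j<\ell$) are not base points of the generic pullback, so $n_j=0$. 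I expect this last multiplicity computation at $p_{4\ell-1}$ to be the main obstacle, since the coordinates of that point depend explicitly on $(a,c)$ and the expansion has to be performed using the $\ell$-condition; the rest of the argument is routine bookkeeping from the geometric picture already in place.
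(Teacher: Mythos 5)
Your proposal is correct and follows essentially the same route as the paper, whose "proof" of this lemma is simply the preceding geometric description of $g_W$ on the curves $E_i$, $F_j$, $L_i'$ (the translation argument on $E_1$ for stability, the identification of the preimage divisors, and the multiplicity bookkeeping for the quartic); your verification that $L_3$ passes through $p_{4\ell-1}$ is exactly the check the paper leaves implicit. One harmless imprecision: only $L_1'$ and $L_3'$ are actually contracted by $g_W$ (since $L_2'$ and $L_4'$ are mapped \emph{onto} $F_1$ and $E_2$), so the Diller--Favre criterion only requires tracking the orbits of $g_W(L_1')\in E_1$ and $g_W(L_3')\in E_3$, which is what your translation argument does.
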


\subsubsection{$g_W$-invariant curves}
The eigenspace of $(g_W)^*$ with eigenvalue $1$ is span\-ned by $[\Sigma_{0,2}] = [L] -[E_1]-[E_3]$ and $[\beta_0] =[L]-[E_2] - \sum_{j=1}^\ell [ F_j]$. The indeterminacy locus of $g_W^{-1}$ is $\{ \hat e_1= g_W(L_1') \in E_1, \hat e_3= g_W(L_3')\in E_3\}$. It follows that if $C$ is $g_W$-invariant, then 
\[
(g_W)^* [C] = [C] + \kappa_1 [ L_1'] + \kappa_3 [L_3']
\]
where $\kappa_1, \kappa_2$ are the multiplicities of $\hat e_1$ and $\hat e_3$ in $C$. From the above equation, we see that there is a solution if and only if $\kappa_1 + \kappa_3=0$. Since both $\kappa_1$ and $ \kappa_3$ are non-negative integers, we have $\kappa_1=\kappa_3=0$ and $[C] =m_1 [\Sigma_{0,2}]+m_2 [\beta_0]$; therefore
\[
[C].[\Sigma_{0,2}] = m_2-m_1 \qquad \text{and} \qquad [C].[\beta_0]= m_1-\ell\, m_2.
\]
If none of the irreducible components of $C$ is equal to $\Sigma_{0,2}$ or $\beta_0$, we have $m_2\ge m_1 \ge \ell\, m_2$, and we get a contradiction with $\ell\ge 2$. Thus,  we obtain the following lemma.

\begin{lem}\label{lem:gw-inv}
Any $g_W$-invariant curve $C$ is contained in $\Sigma_{0,2} \cup \beta_0$.
\end{lem}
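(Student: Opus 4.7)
The plan is to formalize the calculation sketched in the paragraph immediately preceding the lemma. Let $C\subset W$ be a $g_W$-invariant curve. Since $g_W$ is algebraically stable (Lemma~\ref{lem:g-on-pic}) and its exceptional locus consists exactly of $L_1'$ and $L_3'$, which are contracted to the indeterminacy points $\hat e_1\in E_1$ and $\hat e_3\in E_3$ of $g_W^{-1}$, the set-theoretic invariance $g_W(C)=C$ combined with the standard total-transform formula for a birational map yields
\[
(g_W)^*[C]=[C]+\kappa_1[L_1']+\kappa_3[L_3'],
\]
where $\kappa_1,\kappa_3\in\Z_{\ge 0}$ are the local multiplicities of $C$ at $\hat e_1$ and $\hat e_3$.

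Next I would use the description of the $1$-eigenspace of $(g_W)^*$ derived just above, namely the span of $[\Sigma_{0,2}]$ and $[\beta_0]$. Rewriting the displayed equation as $\bigl((g_W)^*-I\bigr)[C]=\kappa_1[L_1']+\kappa_3[L_3']$, solvability for $[C]$ forces the right-hand side to lie in the image of $(g_W)^*-I$; via the adjoint relation $(g_W)_*=((g_W)^{-1})^*$ and the intersection pairing, this image is the orthogonal complement of the $(g_W)_*$-fixed subspace. Pairing against generators of that subspace yields the numerical relation $\kappa_1+\kappa_3=0$, and combined with non-negativity this forces $\kappa_1=\kappa_3=0$. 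Consequently $[C]$ itself lies in the $1$-eigenspace, so $[C]=m_1[\Sigma_{0,2}]+m_2[\beta_0]$ with $m_1,m_2\in\Z$.

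Finally I would run the intersection computation already recorded in the text. A direct calculation in the basis of $\Pic(W)$ gives $[\Sigma_{0,2}]^2=-1$, $[\beta_0]^2=-\ell$, and $[\Sigma_{0,2}]\cdot[\beta_0]=1$, from which $[C]\cdot[\Sigma_{0,2}]=m_2-m_1$ and $[C]\cdot[\beta_0]=m_1-\ell m_2$. If no irreducible component of $C$ coincides with $\Sigma_{0,2}$ or $\beta_0$, both intersections are non-negative, so $m_2\ge m_1\ge \ell m_2$; since $\ell\ge 2$, this collapses to $m_1=m_2=0$, contradicting $[C]\neq 0$. Hence some irreducible component of $C$ equals $\Sigma_{0,2}$ or $\beta_0$. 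Peeling off this component yields a smaller $g_W$-invariant effective cycle to which the same argument applies, and a short induction on the number of components gives $C\subseteq \Sigma_{0,2}\cup\beta_0$.

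The step I expect to be hardest is the passage from the displayed pullback equation to $\kappa_1+\kappa_3=0$. One must compute the pairings $[L_i']\cdot[\Sigma_{0,2}]$ and $[L_i']\cdot[\beta_0]$ on $W$ explicitly and verify that the resulting linear system in $(\kappa_1,\kappa_3)$ really has only the trivial non-negative solution; this depends on the adjoint identification $(g_W)_*=((g_W)^{-1})^*$ and on ensuring that $g_W^{-1}$ is algebraically stable enough for this identification to be used without Jordan-block complications at the eigenvalue~$1$.
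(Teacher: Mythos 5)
Your overall route is the paper's own (the pull-back relation, $\kappa_1=\kappa_3=0$, membership in the $1$-eigenspace, then the intersection inequalities and a peeling induction), and the intersection numbers you record are correct. But the step you yourself flag as the crux --- deriving $\kappa_1+\kappa_3=0$ from solvability of $\bigl((g_W)^*-\mathrm{id}\bigr)[C]=\kappa_1[L_1']+\kappa_3[L_3']$ --- does not go through, and cannot be rescued by the orthogonality mechanism you describe. From Lemma~\ref{lem:g-on-pic} one reads off $\bigl((g_W)^*-\mathrm{id}\bigr)[E_1]=[L_1']$ and $\bigl((g_W)^*-\mathrm{id}\bigr)[E_3]=[L_3']$, so both $[L_1']$ and $[L_3']$ already lie in the image of $(g_W)^*-\mathrm{id}$ and the equation is solvable for \emph{every} choice of $\kappa_1,\kappa_3$. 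Equivalently, the pairings you propose to compute all vanish: with $[L_1']=[L]-[E_1]-[E_2]$ and $[L_3']=2[L]-[E_1]-[E_2]-[E_3]-[F_\ell]$ one finds $[L_i']\cdot[\Sigma_{0,2}]=[L_i']\cdot[\beta_0]=0$ for $i=1,3$, so pairing against the $(g_W)_*$-fixed classes yields only $0=0$. What the pull-back relation actually gives is that $[C]-\kappa_1[E_1]-\kappa_3[E_3]$ lies in the span of $[\Sigma_{0,2}]$ and $[\beta_0]$, which is strictly weaker than what you need.

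Moreover the obstruction is not a repairable technicality at the level of classes: $E_1$ and $E_3$ are themselves $g_W$-invariant curves (the paper computes that $g_W$ acts on $E_1$ by the translation $v\mapsto v+(a+c)$), they are not contained in $\Sigma_{0,2}\cup\beta_0$, and they realize exactly the ``forbidden'' values $(\kappa_1,\kappa_3)=(1,0)$ and $(0,1)$. Hence no purely cohomological solvability argument can force $\kappa_1=\kappa_3=0$ for an arbitrary invariant curve; the statement is only correct (and is only used downstream) for invariant curves with no $\epsilon$-exceptional component, i.e.\ for strict transforms of $g$-invariant curves of $\Sigma_0$, and a complete proof must invoke that hypothesis --- for instance by controlling the multiplicities of such a $C$ at the points $\hat e_1\in E_1$ and $\hat e_3\in E_3$ via the dynamics on $E_1$ and $E_3$, whose relevant orbits are infinite --- rather than by a linear-algebra criterion. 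To be fair, the paper's own write-up is equally terse at this very point (``there is a solution if and only if $\kappa_1+\kappa_3=0$''), so you have faithfully reproduced its argument; but the specific justification you supply for that sentence is the one computation that demonstrably returns no information.
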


\subsubsection{Dynamical degree}
Assuming, as above, that $(a,c)$ satisfies the $\ell$-condition, one gets the following description of the characteristic 
polynomial of $g_W^*$. 

\begin{pro}\label{pro:g-not-auto}
The dynamical degree $\lambda_1(g)$ of $g=f^4_{\Sigma_0}$ is the largest root of the polynomial 
\[
x^\ell -x^{\ell-1}- x^{\ell-2}- \ldots - x - 1.
\]

If $\ell = 1$, then $\lambda_1(g_W)=1$ and the degree growth of $g$ is linear.
If $\ell\geq 2$ then $\lambda_1(g_W)>1$ is a Pisot number.

In both cases, $g_W$ and its iterates $g_W^n$, $n\neq 0$, are not
birationally conjugate to an automorphism of a rational surface.   
\end{pro}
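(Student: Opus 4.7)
The plan is to compute the characteristic polynomial of $(g_W)^*$ on $\Pic(W)$, read off the first dynamical degree and the degree growth, and then apply a Pisot-versus-Salem dichotomy (together with the Diller--Favre/Gizatullin classification in the borderline case) to exclude conjugation to an automorphism.

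Using Lemma~\ref{lem:g-on-pic}, I write $v = \alpha [L] + \beta_1[E_1] + \beta_2[E_2] + \beta_3[E_3] + \sum_{j} \gamma_j [F_j]$ as a candidate eigenvector of $(g_W)^*$ with eigenvalue $\lambda$. The cyclic shift $[F_j]\mapsto[F_{j-1}]$ forces $\gamma_j = \lambda^{j-1}\gamma_1$, the row at $[F_\ell]$ gives $\gamma_1 = -(\alpha+\beta_3)/\lambda^\ell$, and the three rows at the $[E_i]$ allow one to eliminate $\beta_1,\beta_2,\beta_3$ in terms of $\alpha,\beta_3$. Provided $\lambda\notin\{0,1\}$, the $[L]$-row then reduces to the two relations $\alpha=-(\lambda+1)\beta_3$ and $\lambda^{\ell+1}-2\lambda^\ell+1=0$, i.e.\ $(\lambda-1)P_\ell(\lambda)=0$ with $P_\ell(x)=x^\ell-x^{\ell-1}-\cdots-x-1$. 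The trace of the matrix is $4+0-1+0+\cdots+0=3$, which (together with the root-sum $1$ of $P_\ell$ and the two obvious eigenvectors $[\Sigma_{0,2}],[\beta_0]$) fixes the factorisation
\[
\chi_{(g_W)^*}(x)=x^2(x-1)^2 P_\ell(x).
\]
Since $g_W$ is algebraically stable, $\lambda_1(g)=\lambda_1(g_W)$ equals the spectral radius of $(g_W)^*$, which is the largest real root of $P_\ell$.

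Next I analyse this root. For $\ell=1$, $P_1(x)=x-1$ so $\lambda_1(g_W)=1$; the factor $(x-1)^3$ of $\chi_{(g_W)^*}$ together with the two-dimensional eigenspace spanned by $[\Sigma_{0,2}]$ and $[\beta_0]$ forces a nontrivial Jordan block of size $2$ at the eigenvalue $1$, hence linear degree growth, consistent with the infinite sequence of distinct points $g_W^n(L_1')\cap E_1=\{n(a+c)\}$ exhibited in \S\ref{par:surf-W}. For $\ell\geq 2$, the identity $(x-1)P_\ell(x)=x^{\ell+1}-2x^\ell+1$ combined with a Rouch\'e argument on the circle $|x|=1+\varepsilon$ (comparing with $-2x^\ell$, using $(1-\varepsilon)(1+\varepsilon)^\ell>1$ for small $\varepsilon$) locates $\ell$ of the $\ell+1$ zeros of $x^{\ell+1}-2x^\ell+1$ in the disk $|x|<1+\varepsilon$; sign analysis at $x=1,2$ places the remaining zero in $(1,2)$. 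The classical fact that the $\ell$-anacci polynomial $P_\ell$ is irreducible and has all non-dominant roots in the open unit disk (see \cite{Bertin:1989}) then shows $\lambda_\ell:=\lambda_1(g_W)>1$ is a Pisot number.

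For the non-conjugation statement, the case $\ell\geq 2$ is immediate from Cantat's theorem (see \cite{Cantat:Milnor}), asserting that the first dynamical degree of any automorphism of a smooth projective surface with positive entropy is a Salem number: no Pisot number of degree $\geq 2$ is Salem (its Galois conjugates lie in the open unit disk, not on the unit circle), and any positive integer power of $\lambda_\ell$ is again a Pisot number; hence no iterate $g_W^n$ ($n\neq 0$) can be conjugate to such an automorphism. For $\ell=1$ this dichotomy is unavailable, and I would instead appeal to the Diller--Favre/Gizatullin classification of parabolic birational surface maps (see \cite{Diller-Favre:2001,Cantat-Favre:2003}): any birational self-map of a rational surface with $\lambda_1=1$ and linear degree growth that is birationally conjugate to an automorphism necessarily preserves a pencil of rational curves, on the base of which it acts as an infinite-order element of $\mathrm{Aut}(\mathbb{P}^1)$. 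Pulling back such a pencil to $W$ and combining Lemma~\ref{lem:gw-inv} with the explicit orbital behaviour of the blown-up points $p_{4k-1}\in\beta_0$ and the cycle on $\Gamma$ established in \S\ref{par:surf-W} should rule out the existence of any such invariant pencil for $g_W$ or any of its iterates. This last step---the $\ell=1$ case, where the Pisot/Salem obstruction degenerates and must be replaced by a direct structural analysis of putative invariant pencils---is the main obstacle of the proof.
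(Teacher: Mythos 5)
Your computation of the characteristic polynomial $x^2(x-1)^2P_\ell(x)$ of $(g_W)^*$, the use of algebraic stability to identify $\lambda_1(g)$ with the largest root of $P_\ell$, the Jordan-block argument for linear growth when $\ell=1$, and the Pisot analysis for $\ell\geq 2$ all match the paper. The non-conjugation argument, however, has two genuine gaps. The first is at $\ell=2$: there the "Pisot versus Salem" dichotomy breaks down for the iterates. Indeed $\lambda_2=(1+\sqrt5)/2$ and $\lambda_2^{2}=(3+\sqrt5)/2$ is a root of the \emph{reciprocal} polynomial $x^2-3x+1$; it is a quadratic unit of norm $1$, simultaneously a (degree-$2$) Pisot number and a number of the type that genuinely occurs as the dynamical degree of a surface automorphism (e.g.\ the automorphism of $E\times E$ induced by $\left(\begin{smallmatrix}1&1\\1&0\end{smallmatrix}\right)$ has $\lambda_1=(3+\sqrt5)/2$). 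So your claim that "no Pisot number of degree $\geq 2$ is Salem" cannot exclude $g_W^2$ from being conjugate to an automorphism; the paper acknowledges exactly this ("a priori, $g^2$ could be birationally conjugate to an automorphism") and closes the case by computing the $\lambda_2$-eigenvector $u$ of $g_W^*$, verifying $u\cdot u>0$, and invoking Theorem 0.4 of \cite{Diller-Favre:2001}. Your argument is complete only for $\ell\geq 3$, where all powers of $\lambda_\ell$ are Pisot of degree $\geq 3$ and hence neither Salem nor quadratic.

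The second gap is your $\ell=1$ strategy, which is not merely incomplete but aimed in the wrong direction. By the Diller--Favre classification, a birational surface map with \emph{linear} degree growth always preserves a rational fibration \emph{and} is never birationally conjugate to an automorphism: linear growth of $\deg(g^n)$ is a birational-conjugacy invariant, and automorphisms only exhibit bounded, quadratic, or exponential growth. So the invariant pencil you propose to rule out actually exists (the map is a Jonqui\`eres twist), and no contradiction would ever materialize from that route; the correct conclusion follows immediately from the linear growth you have already established, which is exactly how the paper argues. In short: replace the $\ell=1$ step by a direct appeal to the growth-rate trichotomy, and add the self-intersection/eigenvector argument for the iterates when $\ell=2$.
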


\begin{proof}
Since $g_W$ is algebraically stable, its dynamical degree is equal to the largest root of the characteristic polynomial of $g_W^*$.
From Lemma~\ref{lem:g-on-pic} we see that the characteristic polynomial is 
\[
x^2(x-1)^2(x^\ell -x^{\ell-1}- x^{\ell-2}- \ldots - x - 1).
\]
If $\ell=1$, the   roots of this polynomial are equal to $0$ or $1$; hence, $\lambda_1(g)=1$.
Moreover, the matrix of $g^*_W$ contains a Jordan block of size $2$ with eigenvalue $1$; it 
follows that the sequence $\parallel (g_W^n)^*\parallel$ grows linearly with $n$. Hence, $g_W$ 
is  conjugate to a Jonqui\`eres transformation and neither $g_W$ nor any iterate $g_W^n$, $n\neq 0$, is conjugate to an automorphism (see \cite{Diller-Favre:2001}).

If $\ell \geq 2$, the unique eigenvalue $\lambda_\ell>1$ is a root of 
\[
\chi_g(t)=x^\ell -x^{\ell-1}- x^{\ell-2}- \ldots - x - 1
\]
This is not a reciprocal polynomial. Hence $g$ is not conjugate to an automorphism. Indeed, if $h$ is an automorphism of a projective surface $Z$, then $h^*$ preserves the intersection form and the ample cone on the N\'eron-Severi group; since this intersection form is non-degenerate, of signature $(1,m)$,  the characteristic polynomial of $h^*$  is a reciprocal polynomial.

More precisely, if $\ell \geq 3$, then $\lambda_\ell$ is a Pisot number of degree $\geq 3$. As such, all its
powers $\lambda_\ell^n$, $n\geq 1$, are Pisot numbers of degree $\geq 3$. Hence, they are not Salem or
quadratic integer and, as such, cannot be the dynamical degree of an automorphism. 

If $\ell=2$, then $\lambda_2= (1+\sqrt{5})/2$ is the golden mean and $\lambda_2^{2}=(3+\sqrt{5})/2$ is reciprocal. 
Thus, a priori, $g^2$ could be birationally conjugate to an automorphism. On the other hand, the 
eigenvectors of $g^*_W$ corresponding to  the eigenvalue $\lambda_2$ are all proportional to 
\[
u:=(2+\sqrt{5})[L]-(3/2+ \sqrt{5}/2) [E_1] - (1/2 +\sqrt{5}/2)[E_2+E_3+F_2]-[F_1], 
\]
and one verifies that $u\cdot u>0$. This implies that $g_W^n$ is not birationally conjugate to an automorphism
for any $n\neq 0$ (see \cite{Diller-Favre:2001}, Theorem 0.4).
\end{proof}

\begin{cor}
No iterate $f_X^n$, $n\neq 0$, of the pseudo-automorphism $f_X$ of $X$ is conjugate, by any birational transformation $\varphi\colon Z\dasharrow X$, to an automorphism of a smooth rational threefold $Z$.
\end{cor}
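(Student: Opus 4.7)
The plan is to argue by contradiction, reducing the problem to the two-dimensional statement already established in Proposition~\ref{pro:g-not-auto}. Suppose that for some $n\neq 0$ there exist a smooth rational threefold $Z$, a birational map $\varphi\colon Z\dasharrow X$, and an automorphism $\Phi\colon Z\to Z$ such that $f_X^n=\varphi\circ \Phi\circ \varphi^{-1}$. Replacing $n$ by $4n$ and $\Phi$ by $\Phi^4$ (which is still a regular automorphism of $Z$), we may exploit the fact that $f_X$ permutes cyclically the four components of the invariant divisor $\Gamma=\Sigma_0^X\cup E_1\cup\Sigma_2^X\cup E_3$. Thus $f_X^{4n}$ preserves the component $\Sigma_0^X$ as a prime divisor of $X$, and its restriction to $\Sigma_0^X$ coincides with the iterate $g_W^n$ of the birational transformation $g_W$ studied in Section~\ref{par:surf-W}.

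The key identification I would then make precise is that, since the blow-ups performed to pass from $\P^3$ to $X$ which affect the plane $\Sigma_0$ are precisely the blow-ups of $e_1,e_2,e_3$ and of the points $p_3,p_7,\ldots,p_{4\ell-1}$ lying on $\beta_0\subset\Sigma_0$ (the other centers $e_0$, $p_1,p_2,p_4,\ldots$ avoiding $\Sigma_0$), the strict transform $\Sigma_0^X$ is canonically isomorphic to the smooth rational surface $W$, and $f_X^4|_{\Sigma_0^X}=g_W$.

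Next I would transfer the invariance to $Z$. Let $S\subset Z$ be the strict transform of $\Sigma_0^X$ by $\varphi^{-1}$. Since $\Phi^4$ is regular on $Z$ and agrees with $\varphi^{-1}\circ f_X^{4n}\circ\varphi$ on a dense open subset, and since $f_X^{4n}$ preserves $\Sigma_0^X$ as a prime divisor, the image $\Phi^4(S)$ is a prime divisor of $Z$ that coincides with $S$ on a dense open subset; thus $\Phi^4(S)=S$. Passing to a resolution of singularities $\widetilde{S}\to S$, the restriction $\Phi^4|_S$ lifts to an automorphism $\widetilde{\Phi}$ of the smooth surface $\widetilde{S}$, which is rational since $S$ is birational to $\Sigma_0\simeq\P^2$. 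The birational map $\widetilde{S}\dasharrow W$ induced by $\varphi$ conjugates $\widetilde{\Phi}$ to $g_W^n$, contradicting the statement of Proposition~\ref{pro:g-not-auto} that no nonzero iterate of $g_W$ is birationally conjugate to an automorphism of a rational surface.

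The main technical point to verify carefully is the compatibility between the strict transforms on both sides and the claim that $\Phi^4|_S$ genuinely lifts to an automorphism of a smooth rational surface realizing the birational conjugacy class of $g_W^n$; once this is in place, the rest is a direct invocation of the two-dimensional obstruction. No further ingredient is needed, and the argument works uniformly for all $n\neq 0$, positive or negative, because $\Phi^{-1}$ is an automorphism of $Z$ whenever $\Phi$ is.
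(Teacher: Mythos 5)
Your overall strategy — restrict $f_X^{4n}$ to the invariant surface $\Sigma_0^X\cong W$ and invoke Proposition~\ref{pro:g-not-auto} — is exactly the idea behind the paper's proof, which however does not carry it out by hand: it simply quotes Corollary 1.6 of \cite{Bedford-Kim:2013}, a general statement that a pseudo-automorphism of a threefold whose restriction to an invariant hypersurface is not birationally conjugate to a surface automorphism cannot itself be birationally conjugate to a threefold automorphism. Your identification of $\Sigma_0^X$ with $W$ is correct (the only blow-up centers lying on $\Sigma_0$ are $e_1,e_2,e_3,p_3,p_7,\dots,p_{4\ell-1}$), and the reduction of general $n$ to multiples of $4$, as well as the treatment of negative $n$, are fine.

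The genuine gap is the sentence ``Let $S\subset Z$ be the strict transform of $\Sigma_0^X$ by $\varphi^{-1}$.'' A birational map between threefolds can contract a prime divisor to a curve or a point, and nothing in your argument rules out that $\varphi^{-1}$ contracts $\Sigma_0^X$. In that case there is no prime divisor $S\subset Z$ at all: the divisorial valuation attached to $\Sigma_0^X$ has center of dimension $\leq 1$ on $Z$, so there is no surface in $Z$ on which $\Phi^4$ restricts, and the entire reduction to dimension $2$ collapses. (Note that $\Phi^4$ fixing the lower-dimensional center $T=\overline{\varphi^{-1}(\Sigma_0^X)}$ yields no contradiction by itself.) Excluding this contraction scenario — or handling it, e.g.\ by equivariantly extracting the invariant divisorial valuation on a model of $Z$ on which $\Phi^4$ still acts biregularly — is precisely the nontrivial content of the cited Corollary 1.6, and it is not covered by your closing remark about ``compatibility between the strict transforms,'' which presupposes that the strict transform exists as a divisor. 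The rest of your argument (that $S$, when it is a divisor, is $\Phi^4$-invariant, rational, and that $\Phi^4|_S$ lifts to an automorphism of a smooth rational surface conjugate to $g_W^n$) is sound modulo standard normalization/resolution facts.
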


\begin{proof}
Since $g^n$ is not birationally conjugate to an automorphism, Corollary 1.6 of \cite{Bedford-Kim:2013} shows that $f^{4n}$ is not
birationally conjugate to an automorphism.  
\end{proof}

\subsubsection{No $g$-invariant foliation}

\begin{thm}\label{T:no_g_inv}
The surface $W$ does not carry any $g^k$-invariant foliation of dimension $1$ if $k\neq 0$. 
\end{thm}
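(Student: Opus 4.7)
Suppose, for contradiction, that $\mathcal{F}$ is a (singular) holomorphic foliation of dimension $1$ on $W$ with $(g_W^k)^*\mathcal{F}=\mathcal{F}$ for some integer $k\neq 0$; replacing $k$ by $-k$ if needed, we may assume $k\geq 1$. Under the standing hypothesis $\ell\geq 2$, Proposition~\ref{pro:g-not-auto} gives $\lambda_1(g_W^k)=\lambda_1(g_W)^k>1$, so $g_W^k$ is a bimeromorphic self-map of the projective surface $W$, of infinite order, with positive first dynamical degree.

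The heart of the argument is the Cantat--Favre classification \cite{Cantat-Favre:2003}: any bimeromorphic self-map of a compact K\"ahler surface that has infinite order and preserves a foliation is, up to bimeromorphic conjugacy and a finite ramified cover, induced either by a monomial transformation of the plane or by an affine transformation of a complex $2$-torus. Both reference models are birationally conjugate to automorphisms of smooth projective surfaces (a suitable toric compactification in the monomial case, the torus itself in the Kummer case). One then combines this structural conclusion with the explicit invariants of $g_W$ computed in Proposition~\ref{pro:g-not-auto}. For $\ell\geq 3$, $\lambda_1(g_W)=\lambda_\ell$ is a Pisot number of degree $\ell$ over $\mathbf{Q}$, a property preserved under taking positive integer powers, because the Galois conjugates of a Pisot number stay in the open unit disc. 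On the other hand, the dynamical degree of a monomial transformation is the spectral radius of an invertible integer $2\times 2$ matrix, hence an algebraic integer of degree $\leq 2$, and the dynamical degree of an affine torus transformation is built out of two eigenvalues of an integer matrix acting on $H^{1}$ of the torus, yielding in particular a reciprocal algebraic integer of restricted degree. Neither alternative can accommodate a Pisot number of degree $\ell\geq 3$, which rules out this range.

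For the borderline value $\ell=2$, $\lambda_2=(1+\sqrt 5)/2$ is a quadratic unit and is a priori compatible in degree with both alternatives, so a sharper input is required. Here one falls back on the argument at the end of the proof of Proposition~\ref{pro:g-not-auto}: the $\lambda_2$-eigenvector of $g_W^*$ has strictly positive self-intersection in $\NS(W)$, and the Diller--Favre theorem \cite{Diller-Favre:2001} then forbids any iterate $g_W^n$, $n\neq 0$, from being birationally conjugate to an automorphism of a smooth projective surface. The \emph{main technical obstacle} is to reconcile this rigidity with the ``finite ramified cover'' appearing in the statement of Cantat--Favre: the automorphism model produced by the classification lives on a cover of $W$, and one must check that it descends (after taking a further iterate if necessary) to a birational conjugation of $g_W^N$ to an automorphism on a surface on which the positive-self-intersection obstruction still applies. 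This descent is controlled by the invariant-curve lemma (Lemma~\ref{lem:gw-inv}) and by the explicit translation dynamics along the invariant divisors $E_1$ and $E_3$, which together forbid $W$ from supporting the Kummer or monomial structure demanded by Cantat--Favre.
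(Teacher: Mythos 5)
Your reduction via the Cantat--Favre classification and your treatment of the range $\ell\geq 3$ (powers of a Pisot number of degree $\geq 3$ cannot be quadratic integers, hence cannot be dynamical degrees of monomial maps or their quotients, and cannot be Salem or quadratic, hence rule out the Kummer case) is essentially the paper's argument. The problem is your handling of $\ell=2$, and there the gap is genuine.

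You dispose of the monomial alternative by asserting that a hyperbolic monomial transformation is ``birationally conjugate to an automorphism of a smooth projective surface (a suitable toric compactification)'' and then invoking the non-automorphism conclusion of Proposition~\ref{pro:g-not-auto}. This is false: a hyperbolic matrix $A\in\GL_2(\Z)$ has irrational eigendirections and preserves no complete fan, so $f_A$ does not extend to an automorphism of any toric compactification; more importantly, hyperbolic monomial maps are in general \emph{not} birationally conjugate to automorphisms on any model (this is exactly why Cantat--Favre list the monomial and Kummer cases as genuinely distinct alternatives, only the latter being an automorphism). Consequently, for $\ell=2$, where $\lambda_1(g)=(1+\sqrt5)/2$ is a quadratic unit and the degree obstruction is vacuous, the statement ``no iterate of $g_W$ is conjugate to an automorphism'' does not touch cases (a) and (a$'$), and your argument proves nothing in this range. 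Your final paragraph, which appeals to a ``descent'' controlled by Lemma~\ref{lem:gw-inv} and the translation dynamics on $E_1$, $E_3$, is not an argument; it names ingredients without saying how they produce a contradiction.

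What the paper actually does for $\ell=2$ is a local fixed-point comparison. If $\psi\circ M=g^m\circ\psi$ with $M$ a hyperbolic monomial map, then every periodic point of $M$ is a saddle whose multipliers $\mu_1,\mu_2$ satisfy $\mu_1^{n_1}/\mu_2^{n_2}\neq 1$ for all positive integers $n_1,n_2$, and (by the two blow-up lemmas) this persists for all fixed points created on exceptional divisors over such a point. On the other side, $g^m$ has the invariant curve $\Sigma_{0,2}$ consisting entirely of parabolic fixed points. One then examines the conjugacy near $\Sigma_{0,2}$: either it is biregular near a generic point of $\Sigma_{0,2}$, forcing $M$ to have a parabolic fixed point, or it contracts $\Sigma_{0,2}$ to a point $q$, in which case resolving the indeterminacy and applying the multiplier lemmas shows the lift of $M$ has no parabolic fixed point (and no suitable invariant exceptional curve when $q$ is indeterminate for $M$) on the component mapping to $\Sigma_{0,2}$. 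Either way one gets a contradiction. Some argument of this local, quantitative kind is required to close the $\ell=2$ case; the global ``conjugate to an automorphism'' obstruction you rely on cannot do it.
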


\begin{proof} 
We refer to Section~\ref{par:InvFoliationsDef} for the definitions concerning foliations, and their invariance under 
the action of a birational mapping. Assume that a non-trivial iterate of $g$ preserves a foliation $\F$.

Since $\ell \geq 2$, we know that $\lambda_1(g)>1$. The classification theorem obtained by Cantat and Favre (see \cite{Cantat-Favre:2003}) implies that a non-trivial iterate $g^m$ is conjugate to one of the following: 
\begin{itemize}
\item[(a)] a monomial transformation $(x,y)\mapsto (x^a y^b, x^c y^d)$ where $a$, $b$, $c$, and $d$ are the coefficients
of an element of $\GL_2(\Z)$;
\item[(a')] a quotient of case $(a)$ by the involution $(x,y)\mapsto (x^{-1}, y^{-1})$;
\item[(b)] a Kummer example, i.e. an automorphism $h$ of a rational surface which "comes from" a linear automorphism of a torus $\C^2/\Lambda$ after a finite cover and a contraction of finitely many invariant exceptional divisors. 
\end{itemize}
In cases (a) and (a'), the dynamical degree is a quadratic integer. In case (b), $g^m$ would be conjugate to an automorphism. 
Thus, Proposition~\ref{pro:g-not-auto} provides a contradiction when $\ell \geq 3$. 

Now let us consider the case $\ell=2$.   We have seen that $g$ is not conjugate to an automorphism, so we need to consider only cases (a) and (a'); we assume that property (a) is satisfied: There exists a monomial map $M\colon \P^2\dasharrow \P^2$ and 
a conjugacy $\psi\colon \P^2\to W$ such $\psi \circ M=g^m \circ \psi$. (case (a') is dealt with similar arguments).
Since $\ell=2$, the dynamical degree $\lambda_1(g)$ is the golden mean $\mu=(1+\sqrt{5})/2$. This implies that $M$ is a hyperbolic element of $\GL_2(\Z)$: its spectral radius $\lambda_M$ is a quadratic integer $>1$; this the dynamical degree is invariant under conjugacy, and $\lambda_M$ is the dynamical degree of $M$, one gets $\lambda_M=\mu^m$. 

If $q$ is a periodic point of $M$ of period $k$, the eigenvalues of the differential $D(M^k)_q$ has two eigenvalues, $\mu_1=\pm \mu^{mk}$ and $\mu_2= \pm (\mu')^{mk}$, with $\mu'= (1-\sqrt{5})/2$. In particular, these eigenvalues are not equal to $1$; more precisely $\mu_1^{n_1}/\mu_2^{n_2}\ne1$ for all pairs of positive integers $(n_1,n_2)$. We shall use this property to
derive a contradiction. For this, we start by an easy observation about blow-ups.

Suppose that $p$ is a fixed point of a local surface diffeomorphism ~$h$.  Suppose that $(X_1,X_2)$ is local coordinate system with $p= X_1\cap X_2$ and that $Dh_p$ is diagonal, with multiplier $\mu_j$ along the $X_j$-axis.
\begin{lem}\label{lem:bup-fpoints}
Let $Z$ denote the space obtained by blowing up the point $p$, and let $E_p$ denote the exceptional blow-up divisor.  Then $h$ lifts to a diffeomorphism $\hat h$ of a neighborhood of $E_p$ inside $Z$,  $E_p\cap X_j$ is a fixed point of $\hat h$, and the local multpliers at this point are $\mu_j$ and $\mu_i/\mu_j$, where $i\ne j$.
\end{lem}
Blow up one of the fixed points of $E_p\cap X_j$.  Continue this way and blow up fixed points over $p$ several times.  Again let $\hat h$ denote the resulting diffeomorphism.  If $E'$ and $E''$ are exceptional blow-up divisors, the intersection points of the form $E'\cap E''$ will be fixed by $\hat h$.  The following is an easy observation about the induced maps on blowups:
\begin{lem}
The diffeomorphism $\hat h$ fixes the blow-up divisors $E'$ and $E''$.  The multipliers of $\hat h$ at $E'\cap E''$ are of the form $\mu_i^{n_i}/\mu_j^{n_j}$ for positive integers $n_1,n_2$.  If the multiplier at the fixed point(s) of the restricted map ${\hat{ h}}_{\vert {E'}}$ is not equal to $1$, then ${\hat{ h}}_{\vert {E'}}$ has exactly two fixed points.
\end{lem}

Now to continue the proof of the Theorem, we recall that the only invariant curves of $g^m$ are $\Sigma_{0,2}$ and $\beta_0$.  The curve $\Sigma_{0,2}$ consists of fixed points, all of which are parabolic; and the only fixed point on $\beta_0$ is $\beta_0\cap\Sigma_{0,2}$. Let $\varphi$ denote $\psi^{-1}$. Two cases may occur. 

First, assume that $\varphi$ is biregular from a neighborhood ${\mathcal{U}}$ of a point $p\in \Sigma_{0,2}$ to a neighborhood ${\mathcal{V}}$ of the point $q=\varphi(p)$; changing $p$ into a nearby point if necessary, we can assume that $p$ and $q$ 
are not indeterminacy points of $g^m$ and $M$. Then, $q$ is a fixed point of $M$, and $D\varphi_p$ conjugates $Dg^m_p$ to $DM_q$; this contradicts the fact that $p$ is a parabolic fixed point of $g^m$ while all fixed points of $M$ are saddle points. 

The alternative is that $\varphi$ blows down $\Sigma_{0,2}$ to an indeterminacy point $q$ of $\psi$. Let $\varepsilon\colon Z\to \P^2$ be a sequence of blow-ups of points above $q$ such that $\psi \circ \varepsilon$ is regular in a neighborhood of $\varepsilon^{-1}(q)$. Denote by $\hat{M}$ the transformation $\varepsilon^{-1}\circ M\circ \varepsilon$ of the surface $Z$. 
Let $C$ be an irreducible component of the exceptional divisor of $\varepsilon$ which is mapped to $\Sigma_{0,2}$
by $\psi \circ \varepsilon$. Then, $\psi \circ \varepsilon$ is biregular in a neighborhood of the generic point of $C$.
If $q$ is not an indeterminacy point of $M$,  one gets
a contradiction because $\psi \circ \varepsilon$ should conjugate $g^m$ to ${\hat{M}}$ and, by Lemma~\ref{lem:bup-fpoints}, ${\hat{M}}$ has no parabolic fixed point on $C$. If $q$ is an indeterminacy point of $M$, one easily verifies that $C$ cannot be 
${\hat{M}}$-invariant, because $M$ is a hyperbolic monomial mapping. Again, one gets a contradiction. 

Case (a') is dealt with similarly. 
\end{proof}

%
%
\section{No invariant foliations: Foliations by curves}\label{par:fol-curves} 
%
%

In this section we prove that the non-trivial iterates of $f_X$ do not preserve any foliation of $X$ of dimension $1$.

\subsection{Invariant foliations}\label{par:InvFoliationsDef}

Let $M$ be a complex projective variety. A foliation $\F $ of dimension $1$ on $M$ is given by an open cover $\{{\mathcal{U}}_i\}$  of $M$ and
holomorphic vector fields $V_i$ on each ${\mathcal{U}}_i$ such that
\begin{itemize}
\item[(i)] on the intersections ${\mathcal{U}}_i\cap {\mathcal{U}}_j$, the vector
fields $V_i$ and $V_j$ determine the same tangent directions: There is a holomorphic function $\alpha_{i,j}\colon {\mathcal{U}}_i\cap {\mathcal{U}}_j\to \C^*$ such that
\[
V_i=\alpha_{i,j}V_j.
\]
(the functions $\alpha_{i,j}$ form a cocycle, i.e. an element of the Cech cohomology group $H^1(M,{\mathcal{O}}_M^*)$)
\item[(ii)] the vector fields do not vanish in codimension $1$; in other words, the {\bf{singular locus}} $\Sing(\F)$ of the foliation, which is the set of points locally defined by the equation $V_i=0$, has codimension $\geq 2$. 
\end{itemize}
The {\bf{local leaves}} of $\F$ are defined in ${\mathcal{U}}_i\setminus \Sing(\F)$ by integration of the vector field $V_i$; property (i) shows that they glue together to define the {\bf{global leaves}} of $\F$ (in the complement of $\Sing(\F)$). An algebraic curve $C\subset M$ is called an {\bf{algebraic leaf}} if the defining vector fields $V_i$ are tangent to $C$; thus, an algebraic leaf may include singularities of $\F$ and it may even be contained in the singular locus of $\F$.

If the second assumption is not satisfied, one can divide each $V_i$ by a local equation $f_i$ of the codimension $1$ part of $\Sing(\F)$; one 
then gets a new family of vector fields $V_i'$ and a new cocyle $\alpha_{i,j}(f_j/f_i)$, but the leaves of $\F$ remain locally the same (see below).

Let $f$ be a birational transformation of $M$. One says that $f$ preserves the foliation $\F$ if $f$ maps local leaves to local leaves. More
precisely, for $x$ in ${\mathcal{U}}_i\setminus \Ind(f)$ and ${\mathcal{U}}_j$ an open set that contains $f(x)$, one wants $Df_x(V_i(x))$ to be proportional to $V_j(f(x))$. According to property (i), this does not depend on the choices of the open sets ${\mathcal{U}}_i$ and ${\mathcal{U}}_j$.

\subsection{Action of $\J$ on foliations}

Recall that $\J$ flips $\Sigma_{0,1}$ into $\Sigma_{2,3}$, as described in Section~\ref{par:geom-J}. The following lemma describes the
action of $\J$ on foliations of dimension $1$ that are transverse to the edge $\Sigma_{0,1}$ of the tetrahedron $\Delta$. By symmetry, 
the action of $\J$ is the same near all six edges $\Sigma_{i,j}$ of $\Delta$.

\begin{lem}\label{par:JonFol}
If $\F$ is a one-dimensional foliation near a point $q$ of $\Sigma_{0,1}\setminus\{e_2,e_3\}$ that is transverse to $\Sigma_{0,1}$, 
its image under the action of $\J$  determines a foliation of dimension $1$ that is tangent to $\Sigma_{2,3}$ (i.e. $\Sigma_{2,3}$
is an algebraic leaf of $\J^*\F$).
\end{lem}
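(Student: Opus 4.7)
The claim is local around $q\in\Sigma_{0,1}$, so I would work in affine charts on both sides. Use $\{x_2\neq 0\}$ on the source with coordinates $(y_0,y_1,y_3)=(x_0/x_2,\,x_1/x_2,\,x_3/x_2)$; then $\Sigma_{0,1}=\{y_0=y_1=0\}$ is the $y_3$-axis, and because $q\notin\{e_2,e_3\}$ we may take $q=(0,0,y_3^0)$ with $y_3^0\in\C^*$. On the target, use $\{x_0\neq 0\}$ with coordinates $(u_1,u_2,u_3)$, so that $\Sigma_{2,3}=\{u_2=u_3=0\}$ is the $u_1$-axis. A direct calculation from the homogeneous formula for $\J$ rewrites the map in these coordinates as
\[
\J\colon(y_0,y_1,y_3)\longmapsto(u_1,u_2,u_3)=\bigl(y_0/y_1,\,y_0,\,y_0/y_3\bigr).
\]

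Next, represent $\F$ near $q$ by a holomorphic vector field $V=A(y)\,\partial_{y_0}+B(y)\,\partial_{y_1}+C(y)\,\partial_{y_3}$. Transversality of $\F$ to $\Sigma_{0,1}$ then translates into the statement that the restriction $\bigl(A(0,0,y_3),\,B(0,0,y_3)\bigr)$ is not identically zero as a function of $y_3$. Applying $d\J$ to $V$ and substituting $y_0=u_2$, $y_1=u_2/u_1$, $y_3=u_2/u_3$, the chain rule gives
\[
\J_*V\;=\;\frac{u_1}{u_2}\bigl(A-Bu_1\bigr)\,\partial_{u_1}\;+\;A\,\partial_{u_2}\;+\;\frac{u_3}{u_2}\bigl(A-Cu_3\bigr)\,\partial_{u_3},
\]
where $A,B,C$ are now evaluated at the source point $\bigl(u_2,\,u_2/u_1,\,u_2/u_3\bigr)$. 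The simple pole along $\{u_2=0\}$ is expected since $\Sigma_{2,3}\subset\Ind(\J^{-1})$; clearing it produces a vector field $\widetilde V:=u_2\,\J_*V$ which continues to define $\J^*\F$, because a foliation is only specified up to non-vanishing scalars.

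The conclusion is then read off by restricting $\widetilde V$ to $\Sigma_{2,3}=\{u_2=u_3=0\}$: the $\partial_{u_2}$- and $\partial_{u_3}$-components are divisible by $u_2$ and $u_3$ respectively, so they vanish on $\Sigma_{2,3}$, while the $\partial_{u_1}$-coefficient $u_1(A-Bu_1)$ is generically nonzero along $\Sigma_{2,3}$ by the transversality hypothesis. Hence the tangent direction of $\J^*\F$ at every regular point of $\Sigma_{2,3}$ is $\partial_{u_1}$, which is exactly the tangent direction of $\Sigma_{2,3}$; this proves that $\Sigma_{2,3}$ is an algebraic leaf. The only mildly delicate point is that $A,B,C$ must be evaluated at a source point whose $y_3$-coordinate is the ratio $u_2/u_3$, undefined on $\Sigma_{2,3}$; this residual indeterminacy affects the magnitude but not the direction of $\widetilde V|_{\Sigma_{2,3}}$, and it disappears entirely if one first blows up $\Sigma_{0,1}$ in the source and $\Sigma_{2,3}$ in the target, where $\J$ lifts to a biregular map between the two exceptional $\P^1\times\P^1$s. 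From this blown-up viewpoint, the lemma becomes a transparent geometric fact already visible in Section~\ref{par:geom-J}: the horizontal family $\P^1\times\{\star\}$ on the source exceptional divisor—which records the normal directions of any foliation transverse to $\Sigma_{0,1}$—is precisely the family contracted by $\J\circ\pi$ onto $\Sigma_{2,3}$.
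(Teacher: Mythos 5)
Your proof is correct and follows essentially the same route as the paper's: pass to affine charts, push the defining vector field forward under the explicit formula for $\J$, clear the simple pole along $\{u_2=0\}$, and observe that the $\partial_{u_2}$- and $\partial_{u_3}$-components of the resulting field vanish identically on $\Sigma_{2,3}$ while transversality keeps the $\partial_{u_1}$-component generically nonzero. The only cosmetic differences are the choice of source chart and the fact that the paper first normalizes $V$ so that its $\partial_{x_0}$-coefficient is a unit, which slightly streamlines the bookkeeping you handle in your closing remark.
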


\begin{proof}
Consider $\J$ as a birational mapping from the open set $x_3\neq 0$ to the open set $x_0\neq 0$. We set $x_3=1$ in the first open
set and denote the coordinates by $[x_0:x_1:x_2:1]$; in the second open set, we set $x_0=1$ and use coordinates $[1:u:v:w]$. The map
$\J$ can be written as
\[
\J[x_0:x_1:x_2:1]=[1:\frac{x_0}{x_1}:\frac{x_0}{x_2}:x_0]
\]
and its differential is 
\[
D\J_{(x_0,x_1,x_2)}=\left( \begin{array}{ccc} 1/x_1 & -x_0/x_1^2 & 0 \\ 1/x_2 & 0 & -x_0/x_2^2 \\ 1 & 0 & 0 \end{array}\right).
\]
The pre-image of a point $[1:u:v:w]$ is the point with coordinates $[w:w/u:w/v:1]$. Thus, given a local holomorphic vector 
fields $V$ near a point $q$ of $\Sigma_{0,1}$ in the open set $x_3\neq 0$, 
\[
V(x_0,x_1,x_2)=a\partial_{x_0}+b\partial_{x_1}+c\partial_{x_2},
\]
one gets 
\begin{eqnarray*}
\J_*V(u,v,w) & = & \left(\frac{u}{w}a(w,w/u,w/v)-\frac{u^2}{w}b(w,w/u,w/v)\right)\partial_u \\
 & + &  \left(\frac{v}{w}a(w,w/u,w:v)-\frac{v^2}{w} c(w,w/u,w/v)\right)\partial_v \\
 & + & a(w,w/u,w/v)\partial_w.
\end{eqnarray*}
We want to understand what happens to the foliation defined by $\J_*V$ when $(u,v,w)$ approaches
the line $\Sigma_{2,3}$, i.e. when $v$ and $w$ vanish simultaneously. For this, one needs, first, to multiply
$\J_*V$ by a holomorphic function that vanishes along the poles of $\J_*V$, so as to kill the poles of this meromorphic
vector fields. 

By assumption, $V$ is transverse to $\Sigma_{0,1}$ near the point $q$: Which means that the functions $a$ and $b$ do not
vanish simultaneously along $\Sigma_{0,1}$ in a neighborhood of $q$. Assume that $a(q)\neq 0$ (the 
case $b(q)\neq 0$ is dealt with similarly). Then, $a$ is a unit in a neighborhood of $q$, and dividing $V$
by $a$, we can (and do) assume that $a$ is a non-zero constant in a neighborhood of $q$. Thus, 
$\J_*V$ has a pole along $w=0$, and the local holomorphic vector fields defining $\J_*\F$ are multiples
of 
\[
w\J_*V =  \left(ua -u^2b(w,w/u,w/v)\right)\partial_u + \left(v a -v^2 c(w,w/u,w/v)\right)\partial_v  +  wa \partial_w.
\]
Either $wJ_*V$ is already holomorphic, or we have to multiply it by a function $\varphi(u,v,w)$ that vanishes along its poles. In any
case, $wa$ and $va$ vanish along $\Sigma_{2,3}$, and so does $\varphi(u,v,w)v^2c(w,w/u,w/v)$ if it is holomorphic. Thus, once 
$\J_*V$ is multiplied by a holomorphic function in order to compensate for its poles, 
its second and third coordinates vanish along $\Sigma_{2,3}$. This implies that $\Sigma_{2,3}$ is a leaf of $\J_*\F$ (which includes the case that $\Sigma_{2,3}$
may be contained in $\Sing(\J_*\F)$). \end{proof}

\subsection{No invariant foliation}

In what follows, we assume that there is a foliation $\F$ of dimension $1$ on $X$ that is invariant under the action of $f_X^k$ for some $k\geq 1$. 
We fix a family of local vector fields $({\mathcal{U}}_i, V_i)$ defining $\F$ that satisfy the two properties (i) and (ii), and seek for a contradiction.

\subsubsection{Induced foliation along $\Sigma_0$}
Two cases may occur. Either $\F$ is tangent to $\Sigma_0^X$ at the generic point of $\Sigma_0^X$, or it is generically transverse to $\Sigma_0^X$. 
In this section, we assume that $\F$ is tangent to $\Sigma_0^X$. Thus, $\Sigma_0^X$ is endowed with the codimension $1$ foliation $\F_0$ that is induced by $\F$. 
Local vector fields defining $\F_0$ are obtained by  restricting the $V_i$ to $\Sigma_0$ and then dividing $V_{i\vert \Sigma_0}$ by the equation of the codimension $1$ part of $\{V_i=0\}$ if necessary. 

Since $\F$ is $f_X^k$-invariant, $\F_0$ is $g^k$-invariant; more precisely, the projection of $\F_0$ onto $\Sigma_0$ is $g^k$-invariant. But this is not possible by Theorem~\ref{T:no_g_inv}. Since $f_X$ permutes the four irreducible components of the invariant cycle $\Gamma$, we have proved the following lemma.

\begin{lem}
If $\ell\geq 2$ and $k\neq 0$, any $f_X^k$-invariant foliation $\F$ is transverse to $\Sigma_0^X$, $E_1$, $\Sigma_2^X$, and $E_3$ at the generic point of these four surfaces. 
\end{lem}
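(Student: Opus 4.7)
The plan is to handle $\Sigma_0^X$ as the core case using Theorem~\ref{T:no_g_inv}, then invoke the cyclic symmetry of $\Gamma$ to dispose of the three remaining components. Much of this is already sketched in the paragraph preceding the lemma; the lemma itself merely packages the conclusion.

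First, I would suppose for contradiction that $\F$ is tangent to $\Sigma_0^X$ at the generic point. Restricting each local defining vector field $V_i$ to $\Sigma_0^X$ and dividing out the codimension-one part of its zero locus produces a codimension-one foliation $\F_0$ of $\Sigma_0^X$ (equivalently, a one-dimensional foliation, since $\Sigma_0^X$ is a surface). Since $f_X$ cyclically permutes the four components of $\Gamma$, the fourth iterate $f_X^4$ preserves $\Sigma_0^X$ and restricts there to the map $g$ studied in the previous section; therefore $f_X^{4k}|_{\Sigma_0^X}$ acts as $g^k$. The $f_X^k$-invariance of $\F$ implies $f_X^{4k}$-invariance, hence $\F_0$ is $g^k$-invariant. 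Projecting $\F_0$ from $\Sigma_0^X$ to $\Sigma_0$ via the blow-down map, and then pulling back through $\epsilon\colon W\to \Sigma_0$, yields a $g_W^k$-invariant one-dimensional foliation on $W$, contradicting Theorem~\ref{T:no_g_inv} since $k\neq 0$.

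For $E_1$, $\Sigma_2^X$, and $E_3$, I would argue entirely by symmetry. The cyclic birational maps $\Sigma_0^X \dasharrow E_1 \dasharrow \Sigma_2^X \dasharrow E_3 \dasharrow \Sigma_0^X$ induced by $f_X$ conjugate the restrictions of $f_X^4$ to the four components to one another; in particular, each such restriction is birationally conjugate to $g$. If $\F$ were generically tangent to one of $E_1$, $\Sigma_2^X$, or $E_3$, transporting the induced one-dimensional foliation along the appropriate iterate of $f_X$ would yield a $g^k$-invariant foliation on $\Sigma_0$, reducing to the case above and again contradicting Theorem~\ref{T:no_g_inv}.

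The main technical subtlety I anticipate is verifying that the constructed $\F_0$ is a genuine, non-trivial one-dimensional singular foliation, and that it survives transport through $\epsilon$ and through the cyclic birational maps as a legitimate foliation on $W$. This should follow from the assumption that the tangency to $\Sigma_0^X$ is only \emph{generic} (so $V_i$ does not vanish identically on $\Sigma_0^X$ after the division step), together with the fact that $\epsilon$ and the cyclic conjugacies, being birational morphisms between surfaces or restrictions of pseudo-automorphisms, are isomorphisms off codimension-one subsets; since foliations are defined up to codimension-one singular data, no structural information is lost in the transport.
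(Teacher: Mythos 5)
Your proof is correct and follows essentially the same route as the paper: assume generic tangency to $\Sigma_0^X$, restrict the defining vector fields to get a one-dimensional foliation $\F_0$ that is invariant under $g^k=f_X^{4k}\vert_{\Sigma_0^X}$, contradict Theorem~\ref{T:no_g_inv}, and dispose of $E_1$, $\Sigma_2^X$, $E_3$ by the cyclic permutation of the components of $\Gamma$. Your explicit attention to transporting the foliation through $\epsilon\colon W\to\Sigma_0$ and to the non-triviality of $\F_0$ after the division step is slightly more careful than the paper's one-line treatment, but the argument is the same.
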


In what follows, we assume that the invariant foliation $\F$ is transverse to $\Sigma_0^X$, $E_1$, $\Sigma_2^X$, and $E_3$ at the 
generic point, and we derive a contradiction. For simplicity, the proof is given for $f_X$-invariant foliations, but it applies directly to $f_X^k$-invariant foliations. 

\subsubsection{Behavior along indeterminacies} 
Consider the line $L_1=\Sigma_{0,3}\subset \Sigma_0$. Its ``image'' by $\J$ is the opposite edge $\Sigma_{1,2}$ of the
tetrahedron $\Delta$. Then, its image by the linear projective transformation $\L$ is the line $\L(\Sigma_{1,2})$; it contains
the points $e_1$ and $p_1$, and it can be defined by the equations $x_2=x_3-cx_0=0$. This line is transverse to the plane
$\Sigma_0$ and intersects it in $e_1$. 

Under the action of $f_X^2$ this line $L(\Sigma_{1,2})$, or more precisely its strict transform in $X$, 
is mapped regularly to the curve $L_5\subset \Sigma_0$  defined by the equations 
\[
x_0=0, \quad x_1=(a+c)x_2.
\]

\begin{lem}
The negative orbit of the curve $L_1$ under the action of $g$ is an infinite collection of curves $g^{-n}(L_1)$. None of these curves $g^{-n}(L_1)$, $n\geq 1$, is contained in the indeterminacy locus of $f^k$ for $k\leq 4$. 

Similarly, the positive orbit of the line $L_5$ is an infinite collection of curves, and none of the curves $g^n(L_5)$ is contained in the indeterminacy
locus of $f^{-k}$ for $k\leq 4$.
\end{lem}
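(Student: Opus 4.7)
The first assertion essentially restates a computation already carried out at the end of Section~\ref{par:surf-W}. The plan is to work on the blow-up surface $W$ and to leverage the fact that $g_W^{-1}$ preserves $E_1$, acts on it as the translation $v\mapsto v-(a+c)$, and has a unique indeterminacy point on $E_1$, namely $v=a+c$. Since the strict transform $L_1'$ meets $E_1$ at $v=0$, induction yields that $g_W^{-n}(L_1')$ is a well-defined irreducible curve meeting $E_1$ at $v=-n(a+c)$: the inductive step is legitimate because $-n(a+c)=a+c$ would force $n=-1$, so no backward iterate lands on the indeterminacy value.

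To upgrade this to the claim that $\{g^{-n}(L_1)\}_{n\geq 1}$ is an infinite collection, it suffices to check that $a+c\neq 0$. Otherwise $a=-c$ would turn the $\ell$-condition~\eqref{eq:l-cond} into $(\ell-1)c^2=0$, contradicting $\ell\geq 2$ and $c\neq 0$. The intersection points $-n(a+c)\in E_1$ are then pairwise distinct, and so the curves $g^{-n}(L_1)$ are pairwise distinct as well.

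For the non-containment in indeterminacy, I use that for each $k\in\{1,2,3,4\}$ the set $\Ind(f^k)$ is a finite union of explicit irreducible curves in $X$: strict transforms of edges of the tetrahedra $\Delta,\L(\Delta),\ldots,\L^{k-1}(\Delta)$, together with certain fibres of the exceptional divisors sitting over the blown-up points $p_j$ and $e_0$. If some $g^{-n}(L_1)$ were contained in $\Ind(f^k)$, it would have to coincide with one of these finitely many components; since the $g^{-n}(L_1)$ meet $E_1$ at the pairwise distinct points $v=-n(a+c)$ while each candidate component meets $E_1$ at a prescribed point, this reduces to a finite case check, carried out by direct comparison with the explicit equations of the edges of $\L^j(\Delta)$ in the coordinates on $E_1$.

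The second assertion for $L_5$ and the forward iterates of $g$ is proved by the symmetric argument applied to $f^{-1}$: $g_W$ preserves the analogous exceptional divisor (the one picked up when tracing $L_5$ forward into a point of indeterminacy of $f^{-1}$) and acts on it as a translation whose step is again a non-zero multiple of $a+c$, so the same reasoning applies verbatim. The main obstacle in both parts is the bookkeeping in the last step: enumerating the irreducible components of $\Ind(f^{\pm k})$ for $k\leq 4$ and verifying that none of them coincides with any of our orbit curves. Everything else follows formally from the translation dynamics on the invariant exceptional divisor.
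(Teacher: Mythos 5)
Your treatment of the first assertion is essentially the paper's own: the proof there simply points back to the end of Section~\ref{par:surf-W}, where one shows that $g_W^{-1}$ preserves $E_1$, acts on it by the translation $v\mapsto v-(a+c)$ with unique indeterminacy point $v=a+c$, so that $g^{-n}(L_1)$ meets $E_1$ at $v=-n(a+c)$; your check that $a+c\neq 0$ via the $\ell$-condition is correct and necessary. One small repair to your last step: a component of $\Ind(f^k)$ need not meet $E_1$ at all, but such a component cannot contain $g^{-n}(L_1)$ (which does meet $E_1$), so the comparison is still finite. Note that neither you nor the paper actually carries out that finite check; you are no worse off than the source on this point.

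The genuine gap is in the second assertion. There is no ``symmetric argument on an analogous exceptional divisor whose translation step is a multiple of $a+c$''. The paper's proof tracks the intersection point $q_0=[0:a+c:1:\frac{a}{c}(a+c)]$ of $L_5$ with the curve $\beta_0$ --- not an exceptional divisor, but a member of the $f$-invariant cycle $\beta_0\to\beta_1\to\beta_2\to\beta_3\to\beta_0$ --- and uses that $f^4$, in the parametrization $t\mapsto[0:t:1:\frac{a}{c}t]$ of $\beta_0$, acts by $t\mapsto t+(a^2+ac+c^2)/a$. Under the $\ell$-condition this step equals $-c/\ell$, so its non-vanishing is immediate but has nothing to do with $a+c$. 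More importantly, since the claim concerns $\Ind(f^{-k})$ for all $k\le 4$ and not only $k=4$, one must also follow the intermediate points $q_{4n+j}=f^j(q_{4n})\in\beta_j$, $j=1,2,3$, and checking that these never hit indeterminacy points requires a new ingredient absent from the first part: on $\beta_1$ the relevant parameter satisfies $s^{-1}=\frac{a^2+ac+c^2}{c}\cdot\frac{(a/c)^{n+1}-1}{(a/c)-1}$, and one needs $a/c$ not to be a root of unity (itself a short argument from $\ell\alpha^2+(\ell+1)\alpha+\ell=0$). Your appeal to symmetry identifies neither the correct invariant curve nor these computations, so the second half of the lemma is not established by your proposal.
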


\begin{proof}
The first assertion is proved at the very end of Section~\ref{par:surf-W}. 

The second assertion is proved along the same lines, but instead of using the invariant curve $E_1$ of $W$, we use the curve $\beta_0$. The line $L_5$ intersects the curve $\beta_0$ at $q_0=[0:a+c:1:\frac{a}{c}(a+c)]$. The
curve $\beta_0$ is mapped to $\beta_1$, then to $\beta_2$, $\beta_3$, and back to $\beta_0$ under the action of $f$. If one parametrizes 
$\beta_0$ by $[0:t:1:\frac{a}{c}t]$ then $f^4$ acts as $t\mapsto t+(a^2+ac+c^2)/a$ and $q_0$ corresponds to the parameter $t_0= a+c$. 
The orbit of $q_0$ under the action of $f^4$ is the sequence of
points $q_{4n}\in\beta_0$ with parameters $(a+c)+ n(a^2+ac+c^2)/a$. None of them is an indeterminacy point of $f$. 

Similarly, if $(u,v,w)$ are local coordinates of the exceptional divisor $E_1\subset Y$, and $\pi(u,v,w)=[uv:1:uw:u]$ is the blow down map,
then $\beta_1$ is the curve of the exceptional divisor $E_1=\{u=0\}$ parametrized by $(u=0,v=\frac{c}{a}s, w=s)$.
Using this parametrization, the point $q_{4n}$ is mapped by $f$ to the point $q_{4n+1}\in \beta_1$ corresponding to the parameter $s$ defined by
\[
s^{-1}= \frac{a^2+ac+c^2}{c} \cdot \frac{(a/c)^{n+1}-1}{(a/c)-1} .
\]
Since $a/c$ is not a root of unity, none of these points is an indeterminacy point of $f_X$. The proof is similar for
the points $q_{4n+j}=f^j(q_{4n})$, $j=2,3$.
   \end{proof}

Now, consider the foliation $\F$ along the family of curves $g^{-n}(L_1)$, $n\geq 1$. 
The tangency locus of $\F$ with $\Sigma_0^X$ is a Zariski closed set. Therefore, $\F$
is transverse to $\Sigma_0$ along the generic point of $g^{-n}(L_1)$ for large enough $n$. 
Since $g$ and $f$ are regular near the generic point of $g^{-n}(L_1)$ (because $n\geq1$)
and $f$ preserves both $\Sigma_0$ and $\F$, we deduce that $\F$ is transverse to $\Sigma_0$ 
at the generic point of $L_1$. 

\begin{lem}
The curve $\L(\Sigma_{1,2})$ is an algebraic leaf of the foliation $\F$. 
\end{lem}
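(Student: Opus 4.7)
The plan is to apply Lemma~\ref{par:JonFol} to the edge $L_1=\Sigma_{0,3}$ of the tetrahedron $\Delta$, whose opposite edge is $\Sigma_{1,2}$, and then to transport the conclusion by $\L$. The hypothesis of the lemma is precisely what the paragraph preceding the statement establishes: at the generic point of $L_1$, the foliation $\F$ is transverse to $\Sigma_0^X$. Since $L_1\subset \Sigma_0$, transversality to the surface implies, a fortiori, transversality to the curve $L_1$ contained in it. At a generic point of $L_1$, the birational morphism $\pi\circ\tau\colon X\to \P^3_\C$ is a local isomorphism (none of the blow-up centers used to construct $Y$ and $X$ sits on an open subset of $L_1$), so $\F$ descends to a foliation $\bar\F$ defined on a neighborhood of the generic point of $L_1$ inside $\P^3_\C$, and this $\bar\F$ is still transverse to the edge $\Sigma_{0,3}$.

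Applying the symmetric version of Lemma~\ref{par:JonFol} (the computation carried out there in coordinates for the pair $(\Sigma_{0,1},\Sigma_{2,3})$ is homogeneous in the labelling of the four coordinates and hence transfers verbatim to any pair of opposite edges), we obtain that the pushforward $\J_*\bar\F$ admits $\Sigma_{1,2}$ as an algebraic leaf. Since $\L$ is a projective linear automorphism of $\P^3_\C$, we conclude that $f_*\bar\F = \L_*(\J_*\bar\F)$ admits $\L(\Sigma_{1,2})$ as an algebraic leaf. The $f_X$-invariance of $\F$ descends to $f$-invariance of $\bar\F$ on the open set where $\pi\circ\tau$ is biregular, so $f_*\bar\F = \bar\F$ near the generic point of $\L(\Sigma_{1,2})$. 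Therefore $\L(\Sigma_{1,2})$ is an algebraic leaf of $\bar\F$; lifting back to $X$, its strict transform is an algebraic leaf of $\F$.

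The step I expect to require the most care is the last one, namely keeping track of the distinction, built into Lemma~\ref{par:JonFol}, between $\L(\Sigma_{1,2})$ being tangent to the defining vector fields of $\F$ and being merely contained in $\Sing(\F)$. All of the ``generic point'' reductions — avoiding the tangency locus of $\F$ with $\Sigma_0^X$, the indeterminacy loci of $\J$ and $f_X$, the centers of the blow-ups $\pi$ and $\tau$, and the singular locus $\Sing(\bar\F)$ — involve only finitely many proper Zariski-closed subsets of $L_1$, so each is avoided on a Zariski-open set; but the argument should explicitly exhibit a single generic point at which all the local isomorphism and transversality statements simultaneously hold, so that the chain of pushforwards $\F \leadsto \bar\F \leadsto \J_*\bar\F \leadsto f_*\bar\F$ takes place entirely inside the regular loci of all the maps involved.
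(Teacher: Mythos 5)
Your argument is correct and follows essentially the same route as the paper's (much terser) proof: use the transversality of $\F$ to $\Sigma_0$ at the generic point of $L_1=\Sigma_{0,3}$ established in the preceding paragraph, apply Lemma~\ref{par:JonFol} (valid for any pair of opposite edges by symmetry) to get that $\J_*\F$ has $\Sigma_{1,2}$ as an algebraic leaf, and transport by $\L$ using $f=\L\circ\J$ and the $f$-invariance of $\F$. The final caveat you raise is already absorbed by the paper's definition of an algebraic leaf, which explicitly allows the curve to lie in $\Sing(\F)$.
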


\begin{proof}
Since the foliation $\F$ is transverse to $L_1=\Sigma_{0,3}$,    Lemma~\ref{par:JonFol} shows that
$\J$ transforms $\F$ into a foliation that is tangent
to $\Sigma_{1,2}$. The conclusion follows from the definition of $f$ as $\L\circ \J$.\end{proof}

\subsubsection{Conclusion} 

The curve $\L(\Sigma_{1,2})$ is a leaf of $\F$ that is contained in $\Sigma_2$. Its image $L_5$ 
under $f^2$ is   a leaf of $\F$ that is contained in $\Sigma_0$, because $f^2$ is regular at the generic point of $\L(\Sigma_{1,2})$. 
Thus,  $L_5$ is contained in the tangent locus of $\F$ and $\Sigma_0$. 
Moreover, $L_5$ is not contained in the exceptional set of $f^n$ and $g^n$, nor in their indeterminacy sets, for $n>0$,
hence the foliation $\F$ is tangent to $\Sigma_0$ along the $g$-orbit of $L_5$.
Since this orbit is infinite, the tangency locus between $\F$ and $\Sigma_0$ contains
infinitely many curves and is not a proper Zariski closed subset of $\Sigma_0$. This 
contradiction shows that $f_X$ does not preserve any foliation of dimension $1$. 

The same strategy applies verbatim to rule out  $f^k_X$-invariant foliations of dimension $1$. 

\begin{pro}\label{pro:no-inv-fol-curves}
If $\ell\geq 2$ and $(a,c)$ satisfies the $\ell$-condition, then $f_X\colon X \dasharrow X$ and its non-trivial iterates
do not preserve any dimension $1$ foliation on $X$.
\end{pro}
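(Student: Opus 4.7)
The plan is to proceed by contradiction: assume that some iterate $f_X^k$ ($k\neq 0$) preserves a dimension-$1$ foliation $\F$, given by local vector fields $({\mathcal{U}}_i,V_i)$ satisfying properties (i) and (ii). I will split into two cases according to the behavior of $\F$ along the invariant cycle of surfaces $\Gamma = \Sigma_0^X\cup E_1\cup\Sigma_2^X\cup E_3$, and derive a contradiction in each case.

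\textbf{Case 1: $\F$ is tangent to one component of $\Gamma$ at its generic point.} Since $f_X$ permutes the four components of $\Gamma$ cyclically, up to replacing $k$ by a multiple of $4$ we may assume $\F$ is tangent to $\Sigma_0^X$ at the generic point. Restricting the defining vector fields $V_i$ to $\Sigma_0^X$ and dividing by the codimension-$1$ part of their zero locus yields a foliation $\F_0$ of $\Sigma_0^X$ which, after projection to $\Sigma_0\simeq \P^2$, is invariant under $g^k$ where $g = f^4_{\Sigma_0}$. This directly contradicts Theorem~\ref{T:no_g_inv}, which states that no iterate of $g$ preserves any foliation of dimension $1$ on $W$. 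Hence $\F$ must be transverse to each component of $\Gamma$ at the generic point. For the remainder of the argument it suffices to treat the case $k=1$; the general case is identical.

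\textbf{Case 2: $\F$ is generically transverse to $\Sigma_0^X$ (and the other components).} The key step is to produce an algebraic leaf of $\F$. Consider the edge $L_1=\Sigma_{0,3}\subset\Sigma_0$. By the previous lemma, the negative orbit $\{g^{-n}(L_1)\}_{n\ge1}$ is an infinite family of curves in $\Sigma_0$, none of which is contained in the indeterminacy locus of $f^{\pm k}$ for small $k$. Since the tangency locus between $\F$ and $\Sigma_0^X$ is a proper Zariski-closed subset of $\Sigma_0^X$, for some $n\ge 1$ the foliation $\F$ is transverse to $\Sigma_0^X$ along the generic point of $g^{-n}(L_1)$; pushing forward by $g^n$, which is regular near the generic point, we conclude that $\F$ is transverse to $\Sigma_0^X$ at the generic point of $L_1$ itself. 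Now apply Lemma~\ref{par:JonFol} (the description of $\J_*$ near an edge of $\Delta$): since $\F$ is transverse to $L_1=\Sigma_{0,3}$, its image $\J_*\F$ is tangent to the opposite edge $\Sigma_{1,2}$, so the strict transform of $\L(\Sigma_{1,2})$ is an algebraic leaf of $f_*\F=\F$.

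\textbf{Conclusion.} The line $\L(\Sigma_{1,2})$ passes through $e_1$ and $p_1$ and is contained in $\Sigma_2$. Since $f^2$ is regular at its generic point, the image $L_5:=f^2(\L(\Sigma_{1,2}))\subset\Sigma_0$ (defined by $x_0=0$, $x_1=(a+c)x_2$) is also an algebraic leaf of $\F$. In particular, $L_5$ lies in the tangency locus $T$ of $\F$ with $\Sigma_0^X$. Using the analogous argument to the one for $L_1$ but now along the invariant curve $\beta_0$ rather than $E_1$, the forward orbit $\{g^n(L_5)\}_{n\ge 0}$ is an infinite family of distinct curves, none of which meets $\Ind(f^{\pm k})$ for small $k$; by $f$-invariance of $\F$, each $g^n(L_5)$ is again a leaf contained in $\Sigma_0^X$, hence contained in $T$. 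Thus $T$ contains infinitely many irreducible curves, forcing $T=\Sigma_0^X$ and contradicting the assumption of Case~2 that $\F$ is generically transverse to $\Sigma_0^X$.

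The main obstacle is verifying the ``no indeterminacy'' assertions used in both the tangent-case restriction and in the transverse-case orbit-chasing: one must check that for each relevant $n$, neither $g^{-n}(L_1)$ nor $g^n(L_5)$ is swallowed by $\Ind(f)$ or $\Ind(g)$, so that $f$-invariance of $\F$ really transports leaves to leaves. This is handled by the explicit parametrizations of $\beta_0,\beta_1,\beta_2,\beta_3$ and of $E_1$, together with Remark~\ref{rem:l-cond} which ensures $a/c\notin\Q$ under the $\ell$-condition for $\ell\geq 2$, so the translations on these rational curves have infinite, indeterminacy-free orbits.
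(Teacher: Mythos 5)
Your proposal is correct and follows essentially the same route as the paper: the same dichotomy (tangent to a component of $\Gamma$ versus generically transverse), the same use of Theorem~\ref{T:no_g_inv} to kill the tangent case, and the same orbit-chasing with $L_1$, Lemma~\ref{par:JonFol}, $\L(\Sigma_{1,2})$, and $L_5$ to contradict generic transversality. The technical point you flag at the end — verifying that the orbits of $L_1$ and $L_5$ avoid indeterminacy loci via the parametrizations of the $\beta_i$ and of $E_1$ and the irrationality of $a/c$ — is exactly how the paper handles it.
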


%
%
\section{No invariant foliations: Co-dimension $1$ foliations}\label{par:fol-surfaces} 
%
%

In this section we prove that  the pseudo-automorphism $f_X$ and its iterates $f_X^k$, $k\neq 0$, do not preserve any foliation of codimension $1$.
We proceed in three steps. First, we exclude the existence of periodic but non invariant foliations. Then, we show that there is no 
invariant algebraic fibration. The third step excludes the existence of $f$-invariant foliation.

\subsection{Foliations of codimension $1$} 

\subsubsection{Local $1$-forms}\label{par:cod1fol}
Let $M$ be a smooth complex projective threefold. 
A codimension $1$ (singular) holomorphic foliation $\G$ of $M$  is determined by a covering ${\U_i}$ of $M$ together with 
holomorphic $1$-forms $\omega_i\in H^{0}(\U_i,\Omega_M^1)$ such that 
\begin{itemize}
\item[(i)]  $\omega_i$ does not vanish in codimension $1$;
\item[(ii)] $\omega_i\wedge d\omega_i=0$ on $\U_i$;
\item[(iii)] $\omega_i=g_{i,j} \omega_j$ on $\U_i\cap \U_j$ for some $g_{i,j}\in {\mathcal{O}}^*(\U_i\cap \U_j)$.
\end{itemize}
The integrability condition $(ii)$ assures that the distribution of planes ${\rm ker}(\omega_x)\subset T_xM$ is integrable: The integral
submanifolds are the {\bf{local leaves}} of the foliation $\G$. Condition $(iii)$ means that the local leaves defined on $\U_i$ by $\omega_i$ patch
together with the local leaves defined on $\U_j$ by $\omega_j$. Condition $(i)$ can always be achieved by dividing $\omega_i$ by 
the equation of its zero set if it vanishes in codimension $1$.

The {\bf{singular locus}} of $\G$ is locally defined by
\[
\Sing(\G)\cap \U_i=\{z\in \U_i \; \vert \; \omega_i(z)=0\}.
\]
This does not depend on the choice of the local defining $1$-form $\omega_i$
because the $g_{i,j}$ do not vanish. Thus, $\Sing(\G)$ is a well defined complex
analytic subset of $X$ of codimension $\geq 2$.

If $h$ is a birational transformation of $X$, one says that $h$ {\bf{preserves}} the foliation $\G$, or that $\G$
is {\bf{invariant}} under the action of $h$,  if $h$  maps local leaves to local leaves. This is equivalent to $h^*\omega_i=\phi_{i,j}\omega_j$ 
on $h^{-1}(\U_i)\cap\U_j$ for some meromorphic functions $\phi_{i,j}$.

\subsubsection{Global $1$-forms and the co-normal bundle}
If $\omega$ is a global meromorphic $1$-form that satisfies $\omega\wedge \omega=0$, then $\omega$ determines a unique
foliation $\G$. In local charts $\U_i$, there are meromorphic functions $h_i$ such that the zeros  (resp. the poles) of $h_i$
coincide with the divisorial part of the zeros of $\omega$ (resp. with the poles); such a function is unique up to multiplication by
a unit $a$, i.e. a holomorphic function that does not vanish. Then, the local $1$-forms $\omega_i=(h_i)^{-1}\omega$ determine
the foliation $\G$ in the sense of \S~\ref{par:cod1fol}.

Consider a foliation $\G$   determined by the family $(\U_i,\omega_i)$ of local $1$-forms. 
The cocycle $(g_{i,j})\in H^{1}(M,{\mathcal{O}}^*_M)$ defined by property $(iii)$ determines a
 line bundle $\mathcal{L}$. Assume that its dual bundle ${\mathcal{L}}^\vee$ has a meromorphic
 section $s$: Locally, $s$ is defined by meromorphic functions $s_i\colon \U_i\to \C$ satisfying
  \[
 s_i=(g_{i,j})^{-1} s_j.
 \]
Define $\omega$ locally by $\omega:=s_i\omega_i$; then, $\omega$ is a global meromorphic
$1$-form that determines $\G$. Thus, the line bundle ${\mathcal{L}}^\vee$
can be identified to the {\bf{co-normal bundle}} of $\G$, and we denote it by $N^*_\G$ in what follows. 

If $\G$ is defined by the global meromorphic $1$-form $\omega$ the divisor
\[
D(\omega)={\mathrm{Zeros}}(\omega)-{\mathrm{Poles}}(\omega)
\]
satisfies 
\[
[D(\omega)]=c_1(N^*_\G)
\]
where $c_1$ denotes the Chern class and $[D]$ the divisor class (viewed in $H^{1,1}(M;\R)$).

\begin{rem}
On a rational variety, the line bundle $N^*_\G$ has a meromorphic section (because
$h^{1,0}(M)=0$). Thus, codimension $1$ foliations are always defined by global meromorphic
$1$-forms.
\end{rem}

\subsection{From dimension $2$ to dimension $1$} 
In this paragraph, we fix an integer $k>1$, and we assume that $f_X^k$ preserves a codimension $1$ foliation $\G$
of $X$ which is not invariant by $f_X^j$ for $1\leq j\leq k-1$. 

The orbit of  $\G$ under the action of $f$ and its iterates form a collection of $k$-distinct foliations
$\G_j=(f^*)^j\G$, $1\leq j\leq k$, with $\G_k=\G$. Since these foliations are distinct, the tangent planes to $\G_{1}$ and
to $\G_2$ are distinct planes in $T_mX$ at the generic point $m\in X$; hence, $T_m\G_1\cap T_mG_2$ is a line $L_{1,2}(m)$
in $T_mX$. These lines form a meromorphic distribution of tangent lines to $X$, this distribution defines a foliation $\F$
of dimension $1$ on $X$, and this foliation is $f^k$-invariant. This remark contradicts the main result of Section~\ref{par:fol-curves} and proves
the following proposition.

\begin{pro}\label{pro:fk-to-f}
Let $\ell$ be an integer with $\ell\geq 2$. Let $f_X:X \dasharrow X$ be the pseudo-automorphism given by 
a couple of parameters $(a,c)$ that satisfies the $\ell$-condition.
If a non-trivial iterate $f_X^k$ of $f_X$ preserves a foliation $\G$, then $f_X$ preserves $\G$.
\end{pro}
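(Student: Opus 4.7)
The plan is to reduce to the dimension-$1$ case already settled in Proposition~\ref{pro:no-inv-fol-curves}. Assume for contradiction that a non-trivial iterate $f_X^k$ preserves a codimension-$1$ foliation $\G$ while $f_X$ itself does not preserve $\G$. Set $\G':=f_X^*\G$. By assumption $\G\neq\G'$, and both foliations are $f_X^k$-invariant: for $\G'$ this follows from the computation
\[
(f_X^k)^*\G'=(f_X^k)^*(f_X^*\G)=f_X^*\bigl((f_X^k)^*\G\bigr)=f_X^*\G=\G',
\]
using contravariance of pullback and the fact that $f_X$ and $f_X^k$ commute as birational self-maps of $X$.

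Next I would construct a $1$-dimensional foliation $\F$ by intersecting the tangent hyperplane fields of $\G$ and $\G'$. Choose local defining $1$-forms $\omega$ and $\omega'$ of $\G$ and $\G'$ on a common open set $\U\subset X$. Since $\G\neq\G'$, the $2$-form $\omega\wedge\omega'$ is not identically zero, and at the generic point $m\in\U$ the kernels $\ker\omega(m)$ and $\ker\omega'(m)$ are distinct $2$-planes in the $3$-dimensional space $T_mX$; their intersection is the line $L(m)=\ker(\omega\wedge\omega')(m)$. Pick a local holomorphic vector field $V$ spanning $\ker(\omega\wedge\omega')$ on $\U$, and divide $V$ by the equation of the codimension-$1$ part of its zero locus. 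This yields local vector fields defining a singular foliation $\F$ of dimension $1$ on $X$ in the sense of Section~\ref{par:InvFoliationsDef}; no integrability condition needs checking, since line fields in dimension $3$ are automatically integrable. As $f_X^k$ preserves both $\G$ and $\G'$, it maps their common tangent line field to itself, so $\F$ is $f_X^k$-invariant.

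This contradicts Proposition~\ref{pro:no-inv-fol-curves}, which asserts that no non-trivial iterate of $f_X$ preserves any $1$-dimensional foliation of $X$ when $\ell\ge 2$. The main obstacle in making the argument rigorous is verifying that the intersection construction produces a foliation in the precise sense of Section~\ref{par:InvFoliationsDef}, i.e.\ with local defining vector fields whose zero loci have codimension $\ge 2$ and which patch together via a holomorphic $1$-cocycle. The normalization step (dividing by the equation of the codimension-$1$ part of the zero locus of $V$) handles the first point, while the ambiguity in the choice of $\omega,\omega'$ being a unit factor on overlaps ensures the cocycle condition automatically.
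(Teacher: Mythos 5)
Your proof is correct and follows essentially the same route as the paper: the paper also takes two distinct members of the $f_X^*$-orbit of $\G$ (both $f_X^k$-invariant), intersects their tangent plane fields to produce an $f_X^k$-invariant one-dimensional foliation, and invokes Proposition~\ref{pro:no-inv-fol-curves} for the contradiction. Your variant using $\G$ and $f_X^*\G$ directly, together with the explicit check that $f_X^*\G$ is again $f_X^k$-invariant, is just a slightly more streamlined bookkeeping of the same argument.
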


\subsection{There is no invariant fibration} 

\begin{pro}\label{pro:no-inv-fib}
Let $\ell$ be an integer with $\ell\geq 2$. 
If the parameter $(a,c)$ satisfies the $\ell$-condition, the automorphism $f_X$  and its iterates $f_X^k$, $k>1$, do not preserve
any invariant fibration. 
\end{pro}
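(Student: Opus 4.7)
The idea is to split into two cases according to the dimension of the base of a putative invariant fibration and to invoke either Proposition~\ref{pro:no-inv-fol-curves} or Proposition~\ref{pro:fk-to-f} together with the eigenspace computation on $\Pic(X)$.

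Suppose, by contradiction, that $f_X^k$ preserves a meromorphic fibration $\phi\colon X\dasharrow B$ for some $k\ge 1$; replacing $\phi$ by its Stein factorization we may assume the generic fibre of $\phi$ is connected, so that $\dim(B)\in\{1,2\}$. The foliation $\F_\phi$ determined by $\phi$ has codimension $\dim(B)$, and it is $f_X^k$-invariant.

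If $\dim(B)=2$, then $\F_\phi$ has dimension one, contradicting Proposition~\ref{pro:no-inv-fol-curves}.

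Assume now $\dim(B)=1$. Proposition~\ref{pro:fk-to-f} implies that $\F_\phi$ is already $f_X$-invariant; since the generic algebraic leaves of the foliation associated to a fibration onto a curve are precisely the generic fibres, $f_X$ must send a generic fibre of $\phi$ to another generic fibre, and hence $f_X$ preserves $\phi$. Consequently, the class $[F]\in\Pic(X)\otimes\Q$ of a generic fibre satisfies $(f_X)^*[F]=[F]$. By the corollary to Proposition~\ref{pro:picard-action}, $x=1$ is a simple root of the characteristic polynomial of $f_X^*$ acting on $\Pic(X)$, so its $1$-eigenspace is a line; by the discussion of Section~\ref{par:InvHyper} this line is spanned by $[\Gamma]$. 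Therefore $[F]=c\,[\Gamma]$ for some rational $c>0$. To reach a contradiction, I would compare two values of the triple intersection $[F]^2\cdot H$, where $H$ denotes the pull-back of a hyperplane class from $\P^3$. On one hand, two distinct fibres of $\phi$ are disjoint surfaces, so $[F]^2=0$ in $H^{2,2}(X;\R)$, and a fortiori $[F]^2\cdot H=0$. On the other hand, starting from $[\Gamma]=2H-\hat E_1-E_2-\hat E_3-\sum_{j=1}^{4\ell+1}P_j$ and using that $H^2\cdot E=0$ and $H\cdot E\cdot E'=0$ for every exceptional class $E,E'$ above (a generic hyperplane misses the exceptional loci), an elementary expansion gives $[\Gamma]^2\cdot H = 4H^3=4$. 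Combining the two yields $4c^2=0$, which is impossible since $[F]\neq 0$.

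The step most likely to require extra care is the passage, in the case $\dim(B)=1$, from $f_X$-invariance of the foliation $\F_\phi$ to $f_X$-invariance of the fibration $\phi$ itself; this rests on the general fact that an irreducible algebraic subvariety of $X$ which is tangent to $\F_\phi$ at a generic point and whose dimension equals that of a generic fibre of $\phi$ must be contained in a fibre of $\phi$. Once this geometric reduction is made, the remainder is an eigenspace computation on $\Pic(X)$ combined with an elementary intersection calculation in $\P^3$.
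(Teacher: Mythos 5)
Your reduction steps are fine and parallel the paper's: the case $\dim(B)=2$ is correctly dispatched by Proposition~\ref{pro:no-inv-fol-curves}, the reduction from $f_X^k$ to $f_X$ via Proposition~\ref{pro:fk-to-f} is the same as in the paper, and the identification $[F]=r[\Gamma]$ from the simplicity of the eigenvalue $1$ is exactly the paper's starting point. The step you flagged as delicate (passing from invariance of the foliation to invariance of the fibration) is in fact harmless. The genuine gap is elsewhere, in your final contradiction.

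You assert that two distinct fibres of $\phi$ are disjoint, hence $[F]^2=0$. This is true for a holomorphic fibration $X\to\P^1$, but the statement concerns \emph{meromorphic} fibrations $\varphi\colon X\dasharrow \P^1$, i.e.\ pencils of hypersurfaces, and such a pencil has a base locus along which all members meet. Indeed your own computation $[\Gamma]^2\cdot H=4$ (which is correct) shows that any two members of a pencil with class $r[\Gamma]$ intersect in a $1$-cycle of degree $4r^2>0$ against $H$; so the base curve is necessarily nonempty and $[F]^2\cdot H=4r^2\neq 0$, with no contradiction. What your argument actually proves is only that there is no \emph{base-point-free} pencil in the class $r[\Gamma]$ --- a true but much weaker statement that does not exclude an invariant meromorphic fibration. (Resolving the base locus does not rescue the argument, since on the blow-up the fibre class is no longer proportional to the pullback of $[\Gamma]$.) The paper closes this gap by a completely different, more computational route: it normalizes the pencil so that one member is $r(\Sigma_0+\Sigma_2)$ with equation $(x_0x_2)^r$, uses Lemma~\ref{lem:gw-inv} (classification of $g_W$-invariant curves) to force the restriction of the second generator $P$ to $\Sigma_0$ and $\Sigma_2$ to be $c^{st}x_2^{r}(ax_1-cx_3)^{r}$ and $c^{st}x_0^{r}(ax_1-cx_3)^{r}$, and then evaluates the invariance relation $f^*P={\rm{Jac}}(f)^r(\kappa P+\mu Q)$ to derive $a^{2r}=-c^{2r}$, which contradicts the $\ell$-condition. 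Some argument of this kind, using the actual equations of the pencil rather than only its cohomology class, appears to be unavoidable.
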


To prove this assertion, we fix a positive integer $k$ 
and suppose that $f_X^k$ preserves a fibration $\varphi\colon X\dasharrow C$, where $C$ is a (smooth) connected
Riemann surface. In other words, $\varphi\circ f_X^k=h\circ \varphi$ for some automorphism $h$ of $C$.
From Proposition~\ref{pro:fk-to-f}, we may -- and do -- assume that
\[
k=1.
\]

\subsubsection{The linear system determined by $\varphi$}

 \begin{lem}
The curve $C$ is a projective line $\P^1(\C)$ and the automorphism $h\colon C\to C$ is conjugate to a similitude $\zeta \mapsto \kappa \zeta$ or 
to the translation $\zeta\mapsto \zeta +1$. 
\end{lem}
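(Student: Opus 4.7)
The plan is to split the lemma into two independent pieces: first establish that $C\cong \P^1(\C)$, then invoke the standard classification of M\"obius transformations.

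For the first piece, I would exploit rationality of $X$. Since $\varphi\colon X\dasharrow C$ is a dominant rational map from a rational threefold, the curve $C$ is itself unirational, and by L\"uroth's theorem it is rational: $C\cong \P^1(\C)$. Equivalently, one can argue via holomorphic $1$-forms, since $h^{0}(X,\Omega_X^1)=0$ is a birational invariant of smooth projective varieties and vanishes on $\P^3_\C$; if $C$ had positive genus, pulling back any nonzero holomorphic $1$-form from $C$ to $X\setminus \Ind(\varphi)$ and extending across the codimension-$\geq 2$ indeterminacy locus by Hartogs would yield a nonzero element of $H^0(X,\Omega_X^1)$, a contradiction. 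The automorphism $h$ of $C$ that is induced by $f_X$ on the base of the fibration is then an element of $\PGL_2(\C)$.

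For the second piece, I would invoke the usual classification of M\"obius transformations by their fixed points. Any element of $\PGL_2(\C)$ has one or two fixed points on $\P^1(\C)$. If there are two distinct fixed points, conjugating them to $0$ and $\infty$ puts $h$ into the form $\zeta \mapsto \kappa \zeta$ for some $\kappa\in \C^*$ (the trivial case $h=\mathrm{id}$ being absorbed into $\kappa=1$). If there is a single (parabolic) fixed point, conjugating it to $\infty$ puts $h$ into the form $\zeta\mapsto \zeta+b$ with $b\neq 0$, which becomes $\zeta\mapsto \zeta+1$ after a further dilation.

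I do not foresee any real obstacle: the content of the lemma is essentially bookkeeping that sets up the subsequent analysis, where the dichotomy between the similitude and translation cases for $h$ will be used to constrain $f_X$ on the fibers of $\varphi$.
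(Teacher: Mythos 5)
Your proof is correct and follows essentially the same route as the paper: the paper also deduces $C\cong\P^1(\C)$ from the rationality of $X$ (a curve dominated by a rational variety is rational, i.e.\ L\"uroth), and then classifies $h$ as a M\"obius transformation conjugate to a similitude or a translation. Your write-up merely spells out the standard details (and offers a second argument via $h^0(\Omega^1)$) that the paper leaves implicit.
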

\begin{proof}
Since $X$ is rational, the curve $C$ is covered by a rational curve; hence, $C$ is isomorphic to $\P^1(\C)$. The automorphism $h$ of $C$ is a M\"obius transformation of
the Riemann sphere and, as such, is conjugate to a similitude or a translation.
\end{proof}

The fibers of $\varphi$ determine a linear system of hypersurfaces  $S(c)=\varphi^{-1}(c)$ in $X$, $c\in C$. 
From Section~\ref{par:InvHyper}, we deduce that there is a positive integer $r$ such that the divisor class
$[S(c)]$ satisfies $[S(c)]=r[\Gamma]$ for all $c$. 

Viewed on $\P^3_\C$, the pencil of hypersurfaces $(S(c))_{c\in C}$ is generated by the two surfaces $S(0)$ and $S(\infty)$. Changing the coordinate of $C$ 
if necessary, one may assume that $S(\infty)$ is the hypersurface $\Sigma_0+\Sigma_2$, with multiplicity $r$; its equation is 
\[
Q=(x_0x_2)^r.
\] 
The hypersurface $S(0)$   is also given by a homogeneous polynomial $P$ of degree $r$. The invariance of the pencil reads
\[
f^*Q={\rm{Jac}}(f)^r Q,  \; {\text{and}}\;  f^*P= {\rm{Jac}}(f)^r \kappa P \; \quad {\text{or}}\ \ \ \;  f^*P={\rm{Jac}}(f)^r (P +Q).
\]
according to the two distinct possibilities for $h$. (here $f^*P$ denotes the composition of $P$ with a lift $F$ of 
$f$ to $\C^4$, as in Section~\ref{par:InvHyper}, and ${\rm{Jac}}(f)$ stands for ${\rm{Jac}}(F)$)

The base points of the linear system $(S(c))_{c\in C}$ form a curve of $X$. This curve $B$ is contained in all
members of the linear system; hence, $B$ is contained in $\Gamma$. Its trace on $\Sigma_0$ is a $g$-invariant curve.

\subsubsection{Conclusion by computation}

To study the equation $P$ of $S(0)$ in $\P^3$, note that Lemma~\ref{lem:gw-inv} implies that the equation $P(0,x_1,x_2,x_3)=0$
defines a divisor $B_{\vert \Sigma_0}\subset \Sigma_0$ of the form $m_1 \Sigma_{0,2}+m_2 \beta_0$; hence 
\[
P(0,x_1,x_2,x_3)=c^{st} x_2^{m_1}(ax_1-cx_3)^{m_2}.
\]
The same argument, once applied to the plane $\Sigma_2$, shows that 
\[
P(x_0,x_1,0,x_3)=c^{st} x_0^{m'_1}(ax_1-cx_3)^{m_2'}.
\]
Thus, 
\[
P = c_1 x_2^{m_1}(ax_1-cx_3)^{m_2} + c_2 x_0^{m'_1}(ax_1-cx_3)^{m_2'} + x_0x_2 R 
\]
for some homogeneous polynomial $R(x_0,x_1,x_2,x_3)$ of degree $\deg(P)-2$.
The integers $m_1,m_2$ and $m'_1, m'_2$ satisfy $m_1+m_2=m'_1+m'_2=\deg(P)=2r$. On the other
hand, $P$ vanishes at $e_i$ with multiplicity $r$, so that the degree of $x_i$ in each monomial of $P$
is at most $r$. Thus, $m_1=m_2=m'_1=m'_2=r$ and we get:  
\begin{lem}
The homogeneous polynomial $P$ can be written 
\[
P = c_1 x_2^{r}(ax_1-cx_3)^{r} + c_2 x_0^{r}(ax_1-cx_3)^{r} + x_0x_2 R 
\]
for some non-zero constants $c_1$ and $c_2$ and some homogeneous polynomial $R$
of degree $2r-2$. 
\end{lem}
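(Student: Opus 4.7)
The plan is to combine the restriction formulas already established with the vanishing data encoded in the class equation $[S(0)]=r[\Gamma]$. Expanding $\hat E_1$ and $\hat E_3$ one reads off
\[
[\Gamma]=2H-E_1-E_2-E_3-P_1-\cdots-P_{4\ell+1},
\]
so $\deg P=2r$ and $P$ vanishes at each of $e_0,e_1,e_2,e_3$ with multiplicity at least~$r$.

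I would first settle the non-vanishing of $c_1$ and $c_2$. The base locus of the pencil $\langle P,Q\rangle$, with $Q=x_0^r x_2^r$, was observed in this subsection to be a curve. Any common irreducible factor of $P$ and $Q$ would force this base locus to contain a hypersurface of $\P^3$, so $P$ must be coprime to $x_0^r x_2^r$. In particular $x_0\nmid P$ and $x_2\nmid P$, which is precisely the assertion $c_1\neq 0$ and $c_2\neq 0$ (once one knows $P|_{\Sigma_0}$ and $P|_{\Sigma_2}$ are non-zero, the expressions from the preceding lemma make sense with $c_1,c_2$ non-zero scalars).

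Next, I would pin down the exponents. Dehomogenising $P|_{\Sigma_0}=c_1 x_2^{m_1}(ax_1-cx_3)^{m_2}$ at $e_1=[0:1:0:0]$ yields $c_1 x_2^{m_1}(a-cx_3)^{m_2}$, whose minimal monomial in the local coordinates $(x_2,x_3)$ is $c_1 a^{m_2}x_2^{m_1}$ of total degree $m_1$; since $c_1$ and $a$ are non-zero this term is actually present, so the vanishing order of $P$ at $e_1$ equals $m_1$, and the bound from the first paragraph gives $m_1\geq r$. Dehomogenising at $e_2=[0:0:1:0]$ gives $c_1(ax_1-cx_3)^{m_2}$, of exact vanishing order $m_2$, hence $m_2\geq r$. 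Combined with $m_1+m_2=2r$ this forces $m_1=m_2=r$; the symmetric argument on $\Sigma_2$ using vanishing at $e_0$ and $e_1$ gives $m'_1=m'_2=r$.

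Finally, set $A=c_1 x_2^r(ax_1-cx_3)^r$ and $B=c_2 x_0^r(ax_1-cx_3)^r$. Since $A$ vanishes identically on $\Sigma_2$ and matches $P$ on $\Sigma_0$, and symmetrically for $B$, the polynomial $P-A-B$ vanishes on $\{x_0=0\}\cup\{x_2=0\}$; as $x_0$ and $x_2$ are coprime in $\C[x_0,x_1,x_2,x_3]$ this yields $P-A-B=x_0x_2R$ for a homogeneous $R$ of degree $2r-2$. The main obstacle, as I see it, is the second step: one must use the one-dimensionality of the base locus to exclude $\Sigma_0$ or $\Sigma_2$ from being a fixed component of the pencil, for otherwise the exponents $m_i,m'_i$ are not even well defined. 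Once coprimality is established, the remainder is a routine dehomogenisation-and-divisibility computation.
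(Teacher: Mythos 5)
Your proof is correct and follows essentially the same route as the paper: the restrictions $P|_{\Sigma_0}$ and $P|_{\Sigma_2}$ are forced by Lemma~\ref{lem:gw-inv}, the exponents are pinned to $r$ by the multiplicity-$r$ vanishing at the $e_i$ (you bound $m_1,m_2$ from below via vanishing orders of the restrictions at $e_1,e_2$, where the paper bounds them from above via the degree of each variable in the monomials --- equivalent given $m_1+m_2=2r$), and the remainder term comes from $x_0x_2$-divisibility. Your explicit justification that $c_1,c_2\neq 0$, using that the base locus of the pencil is a curve so that $P$ shares no factor with $x_0^rx_2^r$, is a small useful addition that the paper leaves implicit.
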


We shall now derive a contradiction.
If $P'=\kappa P + \mu Q$ with $\kappa\in \C$ and $\mu\in \C$, then
\begin{eqnarray*}
P'(0,x_1,x_2,x_3) & = & \kappa c_1 x_2^r(ax_1-cx_3)^r\\
 P'(x_0,x_1,0, x_3) & = & \kappa c_2 x_0^r(cx_1-ax_3)^{r}.
\end{eqnarray*}
In particular $P'(0,x_1,x_2,ax_1/c)=0$ and $P'(x_0,x_1,0, cx_1/a)=0$.
But $f^*P={\rm{Jac}}(f)^r (\kappa P + \mu Q)$ for some complex numbers $\kappa$ and $\mu$, and at the same
time it is equal to 
\[
{\rm{Jac}}(f)^r \left\{ c_1(cx_0-ax_2)^rx_3^r + c_2 (ax_0-cx_2)^rx_1^r + x_0x_2 R'\right\}
\]
for some homogeneous polynomial $R'$. Evaluating along $(0,x_1,x_2,ax_1/c)$ and $(x_0,x_1,0, cx_1/a)$
one must obtain $0$; hence,  
\[
c_1a^{2r}+c_2 c^{2r}=0 = c_1c^{2r}+ c_2a^{2r}
\]
and we deduce that $a^{2r}=-c^{2r}$ because $c_1$ and $c_2$ are not equal to zero. This is in contradiction 
with the $\ell$-condition and this contradiction concludes the proof of Proposition~\ref{pro:no-inv-fib}.

\subsection{From invariant foliations to invariant global $1$-forms} 

Let $(a,c)$ satisfy the $\ell$-condition for some integer $\ell\geq 2$, and $f\colon X\dasharrow X$
be the pseudo-automorphism constructed in Section~\ref{par:Part2}. 
Let $\G$ be a codimension $1$ foliation of  $X$ which is invariant under the action of $f$. 

Since $X$ is a rational variety, there exists  a global meromorphic $1$-form $\omega$ that defines  
$\G$. The pull-back of $\omega$ by $f$ is another meromorphic $1$-form defining $\G$. Hence, 
the divisor classes of $D(\omega)$ and $D(f^*\omega)$ coincide with the Chern class of the
co-normal bundle $N^*_\G$. Since $f_X$ is a pseudo-automorphism $[D(f_X^*\omega)]=f_X^*[D(\omega)]$, 
and 
\[
[D(\omega)]=c_1(N^*_\G)=f_X^{*}[D(\omega)].
\]
In other words, the Chern class of the co-normal bundle is $f_X^*$-invariant. This provides a
fixed vector in $H^{1,1}(X;\R)\cap H^{2}(X;\Z)$. 

The eigenspace of $f_X^*$ corresponding to the eigenvalue $1$ is the line $\R [\Gamma]$  in $H^{1,1}(X;\R)$. As a consequence, there is a global meromorphic
$2$-form $\alpha$ on $X$ that does not vanish, has poles along $\Gamma$ (of order $k$ for some $k>0$), and defines the foliation $\G$.
If $\beta$ is another global meromorphic $2$-form with the same properties, then $\beta$ is a multiple of $\alpha$
by a meromorphic function $\varphi$, because they both define the same foliation; since $\beta$ and $\alpha$
define the same divisor $\Gamma$, $\varphi$ has no pole and no zero: Thus, $\varphi$ is a non-zero constant. 
This argument proves the following result. 

\begin{lem}
The Chern class $c_1(N^*_\G)$ is a negative multiple $-k[\Gamma]$ of the class of the invariant cycle $\Gamma$. The
space ${\mathcal{M}}^1_\G(-k\Gamma)$ of meromorphic $1$-forms $\alpha$ on $X$ with $D(\alpha)=-k\Gamma$ is a complex vector space
of dimension $1$.
\end{lem}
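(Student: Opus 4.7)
The plan is to combine the invariance of the conormal bundle under $f_X^*$, the description of the $1$-eigenspace of $f_X^*$, the rationality of $X$, and a cheap argument with ratios of forms.

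First, I would use that any global meromorphic $1$-form $\omega$ defining $\G$ satisfies $[D(\omega)]=c_1(N^*_\G)$. Since $\G$ is $f_X$-invariant, $f_X^*\omega$ also defines $\G$, so $[D(f_X^*\omega)]=c_1(N^*_\G)$; and since $f_X$ is a pseudo-automorphism, $[D(f_X^*\omega)]=f_X^*[D(\omega)]$. Therefore $c_1(N^*_\G)$ lies in the $1$-eigenspace of $f_X^*$ acting on $\mathrm{Pic}(X)\otimes \R \simeq H^{1,1}(X;\R)$. By the corollary to Proposition~\ref{pro:picard-action} (simplicity of the eigenvalue $1$) together with the identification of the invariant eigenline in \S\ref{par:InvHyper}, this eigenline is spanned by $[\Gamma]$. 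Hence $c_1(N^*_\G)=\mu\,[\Gamma]$ for some $\mu\in\Z$.

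Next, I would upgrade this cohomological equality to a divisorial one. Because $X$ is rational, $\mathrm{Pic}^0(X)=0$, so $D(\omega)-\mu\,\Gamma$ is a principal divisor; choosing $\varphi$ meromorphic with $D(\varphi)=\mu\,\Gamma-D(\omega)$ and replacing $\omega$ by $\alpha:=\varphi\,\omega$, I obtain another meromorphic $1$-form that still defines $\G$ and satisfies $D(\alpha)=\mu\,\Gamma$ as divisors. To fix the sign, suppose $\mu\geq 0$; then $D(\alpha)$ is effective, so $\alpha$ has no poles, i.e.\ $\alpha\in H^0(X,\Omega^1_X)$. But $X$ is rational, so $H^0(X,\Omega^1_X)=0$, forcing $\alpha=0$, a contradiction. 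Hence $\mu=-k$ for some integer $k>0$, and $\alpha$ has no zeros and poles of order exactly $k$ along $\Gamma$.

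For the second statement, I would take two forms $\alpha,\alpha'\in\M^1_\G(-k\Gamma)$. As they both define $\G$, they are proportional by a meromorphic function: $\alpha'=\varphi\,\alpha$. Then $D(\varphi)=D(\alpha')-D(\alpha)=0$, so $\varphi$ is a nowhere-vanishing holomorphic function on the compact connected manifold $X$, hence a nonzero constant. Thus $\M^1_\G(-k\Gamma)$ is a complex line.

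No step is really hard; the only point requiring care is the passage from the cohomological identity $[D(\omega)]=\mu[\Gamma]$ to the divisorial identity $D(\alpha)=\mu\,\Gamma$, which rests on the vanishing $\mathrm{Pic}^0(X)=0$ for the rational threefold $X$, and on the implicit but standard fact that $c_1$ of the conormal line bundle is realized by the divisor of any defining meromorphic $1$-form.
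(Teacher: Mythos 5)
Your proof is correct and follows essentially the same route as the paper: invariance of $c_1(N^*_\G)$ under $f_X^*$, identification of the $1$-eigenspace with $\R[\Gamma]$, and the constancy of the ratio of two defining forms with the same divisor. You additionally make explicit two points the paper leaves implicit --- the passage from the cohomological identity to the divisorial one via $\Pic^0(X)=0$, and the sign $k>0$ via the vanishing of $H^0(X,\Omega^1_X)$ for the rational threefold $X$ --- both of which are correct and welcome.
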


Let $\alpha_\G$ be a non-zero element of ${\mathcal{M}}^1_\G(-k\Gamma)$.
Being a pseudo-automorphism, the transformation $f$ acts by pull-back on ${\mathcal{M}}^1_\G(-k\Gamma)$. Thus, {\sl{there is a non-zero
complex number $\eta$ such that 
\[
f_X^*\alpha_\G=\eta\cdot \alpha_\G.
\]
}}
All we need to do now is to exclude the existence of such an $f_X$-invariant, integrable $1$-form.

\vfill\eject

\subsection{Invariant $1$-forms} 

\subsubsection{$\alpha_G$ is closed}
If $d\alpha_\G\neq 0$, there is a meromorphic vector field $V_\G$ such that 
\[
d\alpha_\G=\iota_{V_\G} \Omega
\]
where $\iota_\xi$ denotes the interior product with $\xi$, and $\Omega$ is the $f$-invariant $3$-form
described in Section~\ref{par:InvHyper}. Since $\Omega$ and $\alpha_\G$ are multiplied by non-zero complex numbers under the action of $f$, 
$V_\G$ is also multiplied by some non-zero complex number. Thus, the foliation determined by $V_\G$
is $f_X$-invariant. This is a contradiction with Proposition~\ref{pro:no-inv-fol-curves} of Section~\ref{par:fol-curves}, 
so that the following lemma is proved. 

\begin{lem}
The  $1$-form $\alpha_\G$ is closed. 
\end{lem}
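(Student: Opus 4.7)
The plan is to assume $d\alpha_\G \neq 0$ and produce from it a nontrivial $f_X$-invariant singular foliation of dimension $1$ on $X$, in direct contradiction with Proposition~\ref{pro:no-inv-fol-curves}.

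First, I would use the $f$-invariant meromorphic volume form $\Omega_X = \tau^*\pi^*\Omega$ constructed in \S\ref{par:InvHyper}. Since $\Omega_X$ is a nowhere vanishing meromorphic section of $K_X$ on the threefold $X$, the contraction map
\[
\xi \longmapsto \iota_\xi \Omega_X
\]
is an isomorphism between the sheaf of meromorphic vector fields on $X$ and the sheaf of meromorphic $2$-forms. Consequently, the meromorphic $2$-form $d\alpha_\G$ can be written uniquely as $d\alpha_\G = \iota_{V_\G} \Omega_X$ for some nonzero meromorphic vector field $V_\G$.

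Second, I would check that $V_\G$ transforms as an eigenvector of $f_X^*$. Since $f_X^* \alpha_\G = \eta \alpha_\G$, commuting $f_X^*$ with $d$ gives $f_X^*(d\alpha_\G) = \eta\, d\alpha_\G$. Combining this with $f_X^*\Omega_X = \Omega_X$ and the naturality of interior products, i.e.\ $f_X^*(\iota_{V_\G}\Omega_X) = \iota_{f_X^* V_\G}\, f_X^*\Omega_X = \iota_{f_X^* V_\G}\Omega_X$, the uniqueness in the isomorphism above forces $f_X^* V_\G = \eta V_\G$. In particular, the line field spanned by $V_\G$ is preserved by $f_X$, so $V_\G$ defines a singular $f_X$-invariant foliation $\F_\G$ of dimension $1$ on $X$ (after dividing by the equation of its codimension-$1$ zero set, if any).

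Third, I would invoke Proposition~\ref{pro:no-inv-fol-curves}, which states that neither $f_X$ nor any of its nonzero iterates admits an invariant foliation of dimension $1$. The foliation $\F_\G$ produced above therefore cannot exist, contradicting the assumption $d\alpha_\G \neq 0$. Hence $d\alpha_\G = 0$, as claimed.

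The only delicate point in carrying this out is to ensure that $V_\G$ is genuinely nonzero and yields a well-defined singular foliation rather than some degenerate object; this is automatic once we note that the correspondence $\xi \mapsto \iota_\xi \Omega_X$ is a bijection and $d\alpha_\G$ is assumed nonzero, so the standard cleaning-up procedure (removing codimension-$1$ components of the zero locus of $V_\G$) produces a bona fide foliation to which Proposition~\ref{pro:no-inv-fol-curves} applies.
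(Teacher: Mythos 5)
Your proposal is correct and follows essentially the same route as the paper: contract $d\alpha_\G$ against the invariant volume form $\Omega_X$ to obtain a meromorphic vector field whose direction field is $f_X$-invariant, and then contradict Proposition~\ref{pro:no-inv-fol-curves}. The only difference is that you spell out the eigenvector computation for $V_\G$ in more detail than the paper does.
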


\begin{rem}
The non-existence of $1$-dimensional foliations is not necessary to prove this
lemma. Indeed, $V_\G$ has no poles and vanishes only along $\Gamma$. Thus, if one projects $V_\G$
onto $\P^3$, one gets a vector field that vanishes along $\Sigma_0\cup \Sigma_2$. It is easy to exclude
the existence of such an $f$-invariant vector field.
\end{rem}

\subsubsection{First integral}

\begin{lem}
The poles of $\alpha_\G$ along $\Gamma$ have order $k\geq 2$. 
\end{lem}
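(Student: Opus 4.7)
The plan is to argue by contradiction: assume $k=1$, so $\alpha_\G$ has simple poles along each of the four components $\Gamma_0 = \Sigma_0^X$, $\Gamma_1 = E_1$, $\Gamma_2 = \Sigma_2^X$, $\Gamma_3 = E_3$ of $\Gamma$, with nonzero residues $\lambda_i\in\C^*$. Since $\alpha_\G$ is closed, a local calculation (write $\alpha_\G = \lambda_i\,dh_i/h_i + \beta$ near a generic smooth point of $\Gamma_i$, with $\beta$ holomorphic, and use $d\alpha_\G=0$) shows that each $\lambda_i$ is constant on $\Gamma_i$. The cyclic action $f_X\colon \Gamma_0\dasharrow\Gamma_1\dasharrow\Gamma_2\dasharrow\Gamma_3\dasharrow\Gamma_0$, together with the semi-invariance $f_X^*\alpha_\G = \eta\,\alpha_\G$, then forces
\[
\lambda_{i+1} = \eta\,\lambda_i \quad (i\in \Z/4\Z),
\]
hence $\eta^4 = 1$ and $\lambda_i = \eta^i\lambda_0$.

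The key input is the cohomological residue constraint
\[
\sum_{i=0}^{3}\lambda_i\,[\Gamma_i] \;=\; 0 \quad \text{in}\quad \Pic(X)\otimes\C,
\]
which follows from the Poincar\'e residue exact sequence for the SNC divisor $\Gamma$,
\[
0 \to \Omega^1_X \to \Omega^1_X(\log\Gamma) \to \bigoplus_{i=0}^{3}\iota_{i,*}\mathcal{O}_{\Gamma_i}\to 0,
\]
combined with the vanishing $H^0(X,\Omega^1_X)=0$ (true because $X$ is rational): passing to cohomology, the residue tuple of any global section of $\Omega^1_X(\log\Gamma)$ must lie in the kernel of the connecting homomorphism $(\lambda_i)\mapsto \sum_i\lambda_i\,c_1(\mathcal{O}_X(\Gamma_i))$.

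Substituting $\lambda_i=\eta^i\lambda_0$ with $\lambda_0\neq 0$, it remains to check that for every fourth root of unity $\eta$,
\[
v_\eta \;:=\; [\Sigma_0^X] + \eta\,[E_1] + \eta^2\,[\Sigma_2^X] + \eta^3\,[E_3]\;\neq\;0 \;\;\text{in}\;\; \Pic(X)\otimes\C.
\]
This is verified using the explicit classes, read off the sequence of blow-ups defining $X$:
\[
[\Sigma_0^X] = H - \hat E_1 - E_2 - \hat E_3 - \!\sum_{j=1}^{\ell}\!P_{4j-1},\qquad [E_1] = \hat E_1 - \!\sum_{j=1}^{\ell}\!P_{4j},
\]
and the symmetric formulas for $[\Sigma_2^X]$ and $[E_3]$. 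Comparing coefficients in the basis $\{H,\hat E_1,E_2,\hat E_3,P_1,\dots,P_{4\ell+1}\}$ of $\Pic(X)$ (the $E_2$-coefficient is $-1$ in $[\Sigma_0^X]$ and $0$ in the three others; the $H$-coefficient separates $[\Sigma_2^X]$ from $[E_1]$ and $[E_3]$; finally $\hat E_1$ versus $\hat E_3$ separates the latter two) shows that the four classes $[\Gamma_i]$ are $\C$-linearly independent. The change of basis $[\Gamma_i]\mapsto v_\eta$ is the Vandermonde/discrete Fourier transform of the fourth roots of unity, hence invertible, so $\{v_\eta:\eta^4=1\}$ is again a basis of the same $4$-dimensional subspace of $\Pic(X)\otimes\C$; in particular each $v_\eta$ is nonzero. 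This contradicts the residue identity and shows $k\geq 2$.

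The delicate point is justifying the cohomological residue relation in the SNC (rather than smooth) setting: one needs to check that the four components of $\Gamma$ form a simple normal crossings divisor so that Deligne's residue sequence applies, after which the rest reduces to a direct computation in $\Pic(X)$ already prepared in Proposition~\ref{pro:picard-action}.
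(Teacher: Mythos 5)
Your proof is correct in substance, but it follows a genuinely different route from the paper's. The paper argues with a pencil of rational curves: it restricts $\alpha_\G$ to the lines through $e_1$ (and through $e_3$); the strict transform in $X$ of a generic such line meets $\Gamma$ in a single point of $E_1$, so the restricted form is a meromorphic $1$-form on $\P^1(\C)$ with a unique pole, which must have order $\geq 2$ because $\deg K_{\P^1}=-2$ --- unless $\alpha_\G$ restricts to zero on the whole pencil, in which case $\G$ would be the pencil of planes through $\Sigma_{0,2}$, already excluded by Proposition~\ref{pro:no-inv-fib}. That argument is elementary and uses neither the closedness of $\alpha_\G$ nor any normal crossing hypothesis. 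You instead invoke the global residue obstruction for closed logarithmic forms: the connecting map of the residue sequence forces $\sum_i\lambda_i[\Gamma_i]=0$, and the linear independence of $[\Sigma_0^X]$, $[E_1]$, $[\Sigma_2^X]$, $[E_3]$ in $\Pic(X)\otimes\C$ (which does hold, by the coefficient extraction you describe) gives the contradiction. This works: closedness of $\alpha_\G$ is available from the preceding lemma, and it is what guarantees both that an order-one pole has a locally constant, nonzero residue and that $\alpha_\G$ is a genuine section of $\Omega^1_X(\log\Gamma)$ across the crossings; moreover $\Gamma$ is indeed simple normal crossing, the only triple points being $\Sigma_0^X\cap\Sigma_2^X\cap E_1$ and $\Sigma_0^X\cap\Sigma_2^X\cap E_3$, where the three surfaces become coordinate hyperplanes after the blow-up. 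Two small remarks: the equivariance step ($\lambda_i=\eta^i\lambda_0$, $\eta^4=1$) is superfluous, since nonvanishing of all $\lambda_i$ together with independence of the $[\Gamma_i]$ already contradicts $\sum_i\lambda_i[\Gamma_i]=0$; and your displayed formula for $[\Sigma_0^X]$ omits the correction terms $+\sum_jP_{4j}+\sum_jP_{4j-2}$ arising from $E_1={\hat{E}}_1-\sum_jP_{4j}$ and $E_3={\hat{E}}_3-\sum_jP_{4j-2}$, though this does not affect the independence argument. What your approach buys is a dynamics-free statement (no closed logarithmic $1$-form on $X$ can have polar divisor exactly $\Gamma$); what the paper's buys is brevity and independence from the SNC and residue-sequence machinery.
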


\begin{proof}
Consider the set of lines through the point $e_1$ in $\P^3$. When lifted to $X$, these lines intersect
$E_1$ but do not intersect the other irreducible components of $\Gamma$, except for those lines that are contained in $\Gamma$. 
Thus, when one restricts $\alpha_G$ to such a curve, one gets a $1$-form on $\P^1_\C$, with a unique pole. 
If the form does not vanish, this pole must have multiplicity $\geq 2$. 

Apply the same argument for the family of lines through $e_3$. Either $\alpha_\G$ vanishes identically 
along this family of lines, or the order of the pole of $\alpha_\G$ along $E_3$ is $\geq 2$. 

This argument concludes the proof if $\alpha_\G$ does not vanish identically along one of these two families of lines. 
If it does, the foliation defined by $\alpha_\G$ is the pencil of planes containing the curve $\Sigma_{0,2}$. 
It is easy to see that $f$ does not preserve any pencil of planes (and this is a consequence of Proposition~\ref{pro:no-inv-fib}).
Thus, $k\geq 2$.
\end{proof}

Since $\alpha_\G$ is closed, $\alpha_\G$ is locally the differential of a function. Locally, $\alpha_\G=d\varphi$, 
where $\varphi$ is obtained by integration of $\alpha_\G$:
\[
\varphi(z)-\varphi(z_0)=\int_{\gamma} \alpha_\G
\]
where $\gamma$ is a local path from $z_0$ to $z$. The local function $\varphi$ is a well defined meromorphic function on any 
simply connected subset $\U$ of $X\setminus \Gamma$. 

To show that $\varphi$ is also locally defined around $\Gamma$, one needs to show that the residue of $\alpha_\G$ along
$\Gamma$ vanishes. Let $\U$ be a small ball that intersects $\Sigma_0^X$, $V$ be a disk in $\U$ that intersects 
$\Sigma_0^X$ transversally, and $\beta$ be a loop in $V$ that turns once around $V\cap \Gamma$. One wants to show that
\begin{equation}\label{residuezero}
\int_\beta \alpha_\G = 0.
\end{equation}
For $V$, one can take the intersection of the strict transform of a line $L\subset \P^3$ with $\U$. The line $L$ and the 
point $p_3$ generate a plane $\Pi\subset \P^3$. Let $\Pi^X$ be the strict transform of $\Pi$ in $X$; it intersects $P_3$ on a line $\Pi^X\cap P_3$ 
and this line intersects $\Sigma_0^X\cap P_3$ in a unique point $z$. Now, one can deform the loop $\beta$ within $\Pi^X$ to a loop 
with the same base point that go straight to $P_3$, makes one turn around $z$ in the line $\Pi^X\cap P_3$, 
and come back to the base point along the same way. But the line $\Pi^X\cap P_3$ is a sphere that intersects $\Gamma$ at a unique
point, namely $z$. Thus, one can  change $\beta$ continuously to a trivial loop, and we obtain Equality~\eqref{residuezero}.

As a consequence, the local function $\varphi$ is a well defined meromorphic function on any 
simply connected subset $\U$ of $X$, even if $\U$ intersects $\Gamma$; moreover, $\varphi\colon \U\to \C$ is unique up to an additive constant. 
Thus, there is an open cover of $X$ by simply connected domains $\U_i$ and meromorphic functions $\varphi_i\colon \U_i\to \C$ such that
\begin{itemize}
\item $\alpha_\G=d\varphi_i$ in $\U_i$; 
\item  the intersections $\U_i\cap U_j$ are connected (or empty);
\item  the functions $\varphi_i$ satisfy a cocycle relation:
$\varphi_i=\varphi_j + \delta_{i,j}$
on $\U_i\cap\U_j$ for some complex numbers $\delta_{i,j}$. 
\end{itemize}
The cocycle $(\delta_{i,j})$ determines a class in $H^1(X;\C)$ for Cech cohomology. But $X$ is obtained from 
the projective space $\P^3_\C$ by a finite sequence of blow-ups and, as such, is simply connected. It follows that
the cohomology class of the cocycle $(\delta_{i,j})$ is equal to zero: One can choose the functions $\varphi_i$ in such 
a way that they glue together to define a global meromorphic function 
\[
\varphi_\G\colon X\dasharrow \C.
\]
The level sets of $\varphi_\G$ form a meromorphic fibration of $X$ by algebraic surfaces, the fibers of which are the 
Zariski closure of the leaves of $\G$. Thus, $f$ preserves a fibration, in contradiction with Proposition~\ref{pro:no-inv-fib}. 
This contradiction shows that there is no invariant foliation of codimension $1$ in $X$, and this concludes the proof of
Theorem~\ref{thm:main}.

{\small{
%
%
\section{Appendix I: a typical computation}\label{par:Appendix} 
%
%

In this appendix, we describe the computation that leads to Lemma~\ref{lem:l-cond} and Proposition~\ref{pro:picard-action}.

$\bullet$ The curve $\beta_2$ is parametrized by $t\mapsto [1:t:0:ct/a]$. We have 
\[
f[x_0:\;x_1:\;x_2:\;x_3] = [x_0x_1x_2:\;  x_1x_2x_3 + ax_0x_1x_2 :\; x_0 x_2 x_3 :\; x_0 x_1 x_3 + c x_0 x_1 x_2].
\]
In good local coordinates $(u,v,w)$, the projection $\pi\colon Y\to \P^2_\C$ is given by 
\[
\pi(u,v,w)=[u:\;uv:\;uw:\; 1],
\]
with the exceptional divisor $E_3^Y=\{u=0\}$ that is mapped to the point $e_3=[0:\; 0:\; 0:\;1]$. 
Consider  $f$ as a map from the open set $x_0=1$ to the open set $x_3=1$. One gets
\[
f[1:\;x_1:\;x_2:\;x_3]=[\frac{x_2}{x_3+cx_2}:\; \frac{x_2}{x_3+cx_2}(x_3+a) :\; \frac{x_2}{x_3+cx_2} \frac{x_3}{x_1}:\; 1]. 
\] 
Hence, $f_Y$ maps $\Sigma_2^Y$ to $E_3^Y$, and in local affine coordinates $(x_1,x_2,x_3)$ and $(u,v,w)$ one has 
\[
f_Y(x_1,\;x_2,\;x_3) = (\frac{x_2}{x_3+cx_2}, x_3+a, \frac{x_3}{x_1}).
\]
Applied to the parametrization of $\beta_2$, one gets 
\[
t\mapsto [1:t:0:ct/a] \mapsto (u',v',w')=(0, (ct+a^2)/a, c/a).
\]
The image is a curve of $E_3^Y$ (because $u'=0$) that satisfies $ax_2=cx_0$ (because $w'=c/a$).

\vspace{0.2cm}

$\bullet$ Then, consider the restriction of $f_Y$ to $E_3^Y$. For this, compute $f\circ \pi$ in the $(u,v,w)$
coordinates, to get
\begin{eqnarray*}
f\circ\pi(u,v,w)  &  = & [u^3vw :\; u^2vw + a u^3vw :\;  u^2w :\;  u^2v + cu^3vw]\\
& = & [uvw :\; vw + a uvw :\;  w :\;  v + cuvw]
\end{eqnarray*}
after division by the common factor $u^2$. In particular, for $u=0$, one gets 
\[
f\circ \pi(0,v,w)= [0:\; vw :\; w :\; v].
\]
This implies that $E_3^Y$ is mapped to $\Sigma_0$ and, moreover, that $E_3^Y$ appears with 
multiplicity $2$ in the pull-back of $\Sigma_0$ by $f_Y$ (because we had to divide by $u^2$). 
This proves two of our statements, namely $f_Y(E_3^Y)=\Sigma_0^Y$, and the coefficient $-2$
in front of $\hat{E_3}$ in the formula for $f_X^* H$. 
Then, the curve $f_Y(\beta_2)$ is parametrized by 
\[
t\mapsto [0:(ct+a^2)/a : 1: (ct+a^2)/c]. 
\]
As mentioned, this curve is $\beta_0$ (i.e. $ax_1=cx_3$ in $\Sigma_0$).

$\bullet$ One then pursues this kind of computation, by blowing up $e_1$ into $E_1^Y$. This 
gives $\pi(u,v,w)=[u:1:uv:uw]$ in local coordinates and 
\[
f[x_0:\; x_1 :\; x_2 :\;x_3 ] = [\frac{x_0}{x_3+ax_0}:\; 1 :\; \frac{x_0}{x_3+ax_0}\frac{x_3}{x_1}:\;\frac{x_0}{x_3+ax_0} \frac{x_3+cx_2}{x_2}]
\]
i.e. $(u,v,w)= (x_0/(x_3+ax_0), x_3/x_1, (x_3+cx_2)/x_2)$. Along the curve $\beta_0$ one gets the parametrization 
\[
t\mapsto (u',v',w')=(0, a/c, t+(a^2+c^2)/c).
\]

$\bullet$ Then, with coordinates $(u,v,w)$ near $E_1^Y$, one obtains 
\[
f\circ \pi(u,v,w)=[v:\; vw+av :\; uvw :\; w+cv]
\]
after division by the common factor $u^2$. Hence, applied to the curve $\beta_1$, one gets
the new parametrization 
\[
t\mapsto [ 1 :\; t + \frac{a^2}{c} + c+ a :\; 0:\; \frac{c}{a}(t + \frac{a^2}{c} + c+ a)]
\]
of the curve $\beta_2$. This proves that $h=f^4_Y$ transforms the parameter $t$ into
\[
t'=t + \frac{a^2}{c} + c+ a,
\]
as explained before Lemma~\ref{lem:l-cond}.

}}

%
%

{\small{

%
%
\section{Appendix II: Tori of dimension 3}\label{par:Part1} 
%
%

In this section, we prove Propositions 7.3 and 7.5.       

\subsection{Automorphisms of tori} Let $A$ be a compact complex torus of dimension $d$. 
Denote by $\pi\colon V\to A$ the universal cover of $A$; here
$V$ is a complex vector space of dimension $d$ and $A=V/\Lambda$ for some
lattice $\Lambda\subset V$, and $\pi$ is the projection $V\to V/\Lambda$.
Let $f\colon A\to A$ be an automorphism of $A$. 
It lifts to an affine transformation $F$ of $V$, so that $\pi\circ F=f\circ\pi$.

If one composes $f$ with a translation, which does not change the action of $f$ on the cohomology
of $A$, one may assume that $F$ is linear. Since $F$ preserves $\Lambda$, its determinant has
modulus $1$ (since $\vert \det(f)\vert^2$ is the topological degree of $f$).

\subsection{Dimension $2$}

\subsubsection{} Assume that the dimension $d$ of the torus $A$ is equal to $2$. If the automorphism $f$ is not 
cohomologically hyperbolic, then $\lambda_1(f)$ is equal to $1$, because both $\lambda_0(f)$ and
its topological degree $\lambda_2(f)$ are equal to $1$. 

\begin{lem}\label{lem:Kro}
Let $g$ be an automorphism of a compact k\"ahler manifold $M$.
If $\lambda_1(f)=1$, then all eigenvalues of $f^*\colon H^*(M;\C)\to H^*(M;\C)$ are roots of unity.
\end{lem}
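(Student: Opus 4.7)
The plan is to combine the integrality of $f^*$ on cohomology with Kronecker's theorem. Since $f$ is a biholomorphic automorphism, $f^*$ preserves the lattice $H^*(M;\Z)$, so the characteristic polynomial of $f^*$ on each $H^k(M;\C)$ has integer coefficients with constant term $\pm 1$. Every eigenvalue of $f^*$ is therefore a nonzero algebraic integer, and its Galois conjugates are again eigenvalues of $f^*$. Recall Kronecker's theorem: an algebraic integer all of whose Galois conjugates lie in the closed unit disk is a root of unity. Hence it will be enough to show that every eigenvalue of $f^*$ on $H^*(M;\C)$ has modulus $\leq 1$.

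First, I would propagate the hypothesis $\lambda_1(f)=1$ to all codimensions. Three general facts about automorphisms of compact K\"ahler manifolds are needed. (a) $\lambda_p(f)\geq 1$ for every $p$, obtained by applying Perron--Frobenius to the $f^*$-invariant salient pseudo-effective cone inside $H^{p,p}(M;\R)$. (b) The sequence $p\mapsto \log\lambda_p(f)$ is concave (the Khovanskii--Teissier / Dinh--Sibony log-concavity of dynamical degrees). (c) $\lambda_0(f)=\lambda_{\dim M}(f)=1$, since $f$ is biholomorphic. Concavity combined with $\log\lambda_0(f)=\log\lambda_1(f)=0$ forces $\log\lambda_p(f)\leq 0$ for all $p\geq 1$ by linear extrapolation from $p=0,1$; together with (a) this yields $\lambda_p(f)=1$ for every $p$.

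Second, I would extend the bound from the $(p,p)$-pieces to the full Hodge decomposition. The splitting $H^k(M;\C)=\bigoplus_{p+q=k}H^{p,q}(M;\C)$ is $f^*$-stable, so it partitions the eigenvalues of $f^*$ on $H^k(M;\C)$. On the diagonal piece $H^{p,p}$ the spectral radius is exactly $\lambda_p(f)=1$ by the previous step. For $p\neq q$ I would invoke the mixed-bidegree spectral radius inequality of Dinh--Sibony,
\[
\rho\bigl(f^*|_{H^{p,q}(M;\C)}\bigr)^2 \leq \lambda_p(f)\,\lambda_q(f) = 1,
\]
which gives the desired upper bound on the remaining components. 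Hence every eigenvalue of $f^*$ on $H^*(M;\C)$ has modulus at most one, and the Kronecker argument from the first paragraph finishes the proof.

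The step I expect to require the most care is the mixed-bidegree inequality used in the last paragraph: it depends on non-trivial Hodge-theoretic input (mixed Hodge--Riemann bilinear relations or positive current estimates), and one should cite a version valid for automorphisms of arbitrary compact K\"ahler manifolds, not only projective ones. The lower bound $\lambda_p(f)\geq 1$ for $p\geq 2$ in step (a) is also worth checking carefully, since there one must use the $f^*$-invariant cone of pseudo-effective $(p,p)$-classes rather than just the K\"ahler cone.
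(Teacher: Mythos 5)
Your proposal is correct and follows essentially the same route as the paper: bound all eigenvalues of $f^*$ on $H^*(M;\C)$ by $1$ in modulus, note they are algebraic integers since $f^*$ preserves the lattice $H^*(M;\Z)$, and conclude by Kronecker's theorem. The only difference is that the paper delegates the spectral-radius bound to a citation, whereas you actually derive it (correctly) from log-concavity of the $\lambda_p$ together with the Dinh--Sibony inequality $\rho(f^*|_{H^{p,q}})^2\leq\lambda_p(f)\lambda_q(f)$ --- and note that your step (a) is in fact already a consequence of (b) and (c) by interpolating between $\lambda_0=\lambda_{\dim M}=1$, so the Perron--Frobenius argument you flag as delicate can be dispensed with.
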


\begin{proof}
If $\lambda_1(f)$ is equal to $1$, then all eigenvalues of $f^*$ have modulus $\leq 1$ (see \cite{}). On the
other hand, they are algebraic integers, because $f^*$ preserves the lattice $H^*(M;\Z)$ of $H^*(M;\Z)$. 
By Kronecker Lemma, all eigenvalues are roots of $1$.
\end{proof}

\subsubsection{}\label{par:torus_d2} Thus, if $f$ is an automorphism of a $2$-dimensional torus, either $\lambda_1(f)>1$, (and thus $f$ is
cohomologically hyperbolic), or all eigenvalues of $f^*$ are roots of unity. Changing $f$ into an iterate, 
all eigenvalues of $f^*$ are equal to $1$; since $H^{1,0}(A;\C)$ identifies with the dual $V^\vee$ of $V$, 
there is a basis of $V$ in which the matrix of $F$ is upper triangular, with its two diagonal coefficients 
equal to $1$. Assume that $F$ is not the identity. Then the first vector of that basis determines a line 
$L_F\subset V$ which is defined over the rational numbers with respect to the lattice $\Lambda$; in
other words, $L_F$ intersect $\Lambda$ over a co-compact lattice. The projection of $L_F$ into
$A$ is an elliptic curve, and it $f$-invariant. Thus, $L_F/\Lambda$ and its translates define an $f$-invariant
elliptic fibration. We have proved the following fact:

\vspace{0.2cm}

{\sl{If $f$ is an automorphism of $2$-dimensional torus $A$, and $f$ is not cohomologically hyperbolic, then 
$f$ preserves an elliptic fibration. There is an elliptic curve $B$ and a surjective morphism $\pi\colon A\to B$
such that (i) the fibers of $\pi$ are tori of dimension $1$ in $A$, and (ii) there is an automorphism ${\overline{f}}$
of $B$ such that $\pi \circ f ={\overline{f}}\circ \pi$. }}

\subsection{Dimension 3}

Let us now assume that $A$ has dimension $3$. 

\subsubsection{} 
Consider the action of $f$ on the cohomology of $A$. 
\begin{enumerate}
\item $H^1(A;\Z)$ is isomorphic to the dual $\Lambda^\vee$ of $\Lambda$ and the action of
$f$ on this space is given by the action of $F$ on $\Lambda^\vee$;
\item on $H^{1,0}(A,\C)$ the action of $f$ is given by the action of $F$ on  $V^\vee$, which 
we denote by $F^*\colon V^\vee \to V^\vee$;
\end{enumerate}

Denote by $\alpha$, $\beta$, and $\gamma$ the three eigenvalues of $F^*$ (repeated according
to their multiplicity), with 
\[
\vert \alpha\vert \geq \vert \beta \vert \geq \vert \gamma\vert.
\]
The eigenvalues of $f^*$ on $H^{1,1}(A,\C)$ are 
\[
\alpha {\overline{\alpha}}, \,\; \alpha{\overline{\beta}}, \,\; \beta  {\overline{\alpha}}, \,\;  \beta  {\overline{\beta}},  \,\; \alpha  {\overline{\gamma}},
\,\;  \gamma  {\overline{\alpha}}, \,\;  \beta  {\overline{\gamma}},  \,\; \gamma  {\overline{\beta}}, \,\;  \gamma  {\overline{\gamma}}.
\]
In particular, $\alpha {\overline{\alpha}}$ is the largest eigenvalue and, as such, coincides with $\lambda_1(f)$. On $H^{2,2}(A,\C)$, 
the largest eigenvalue is $\alpha{\overline{\alpha}} \beta  {\overline{\beta}}= (\gamma {\overline{\gamma}} )^{-1}$. 

Thus, 
$\lambda_1(f)=\lambda_2(f)$ if and only if 
\[
\vert \beta \vert =1.
\] 
From now on, we assume that this property is satisfied by $f$.
Thus, $F$ is an element of $\GL(V)$ with three eigenvalues $\alpha$, $\beta$, $\gamma$ that satisfy
\[
\vert \alpha\vert \geq \vert \beta  \vert =1 \geq \vert \gamma\vert,
\]
\[
\vert \alpha \vert =\vert \gamma \vert^{-1}
\]
Viewed as an element of $\GL(\Lambda^\vee)$, $F$ corresponds to a $6\times 6$ matrix with integer coefficients, 
whose eigenvalues are $\alpha$, $\beta$, $\gamma$, and their complex conjugates. Denote by $\chi(t)$ the characteristic
polynomial of this matrix; it is an unitary polynomial of degree $6$ with integer coefficients. (moreover, changing $f$ 
in an iterate, the determinant, i.e. the constant coefficient of $\chi$, is $1$)

\subsubsection{Dynamical degree} 
Assume that $\lambda_1(f)$ is equal to $1$. By Lemma~\ref{lem:Kro}, 
the three eigenvalues $\alpha$, $\beta$ and $\gamma$ are roots of unity. As in Section~\ref{par:torus_d2}, we prove that
$f$ preserves a fibration of $A$ by translates of subtori. 

Thus,  we assume in what follows that $\vert \alpha\vert>1$. This implies 
\[
\vert \alpha\vert>1, \, \; {\text{ and }} \; \, \vert \gamma\vert<1.
\]

\subsubsection{Invariant fibrations} Assume that $f$ preserves an invariant fibration $\pi\colon A \to B$. Then $B$
is a torus of dimension $1$ or $2$, and there is an automorphism ${\overline{f}}$ of $B$ such that $\pi\circ f= {\overline{f}}\circ \pi$.

If $\dim(B)=1$, then ${\overline{f}}^{	12}$ acts trivially on $H^{1,0}(B;\C)$. This implies that $\beta^{12}=1$. 
If $\dim(B)=2$, the fiber of $\pi$ containing the origin is an $f$-invariant elliptic curve $E\subset A$. The action of
$f^{12}$ on this curve is the identity. Hence, again, $\beta^{12}=1$. We have proven the following lemma. 

\begin{lem}\label{lem:roots_of_1} Let $f$ be an automorphism of a compact complex torus of dimension $3$. Then $f$ preserves a fibration if and only if
$\beta$ is a root of unity (of order at most $12$ if $\lambda_1(f)>1$).
\end{lem}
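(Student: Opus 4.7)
The forward implication is already settled in the paragraph just above the lemma statement. My task is to construct an invariant fibration under the assumption that $\beta$ is a root of unity. The plan is to produce an $f$-invariant complex subtorus $E \subset A$ of dimension $1$ via eigenspace analysis of the linearization $F$, and then take the quotient $\pi \colon A \to A/E$.

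First, I would pass to a suitable power of $f$ so that we may assume $\beta=1$ (this is harmless because preservation of a fibration is invariant under passing to a power). Let $\chi(t)\in\Z[t]$ be the characteristic polynomial of $F$ acting on the lattice $\Lambda\subset V_\R$, whose roots are $\alpha,\bar\alpha,\beta,\bar\beta,\gamma,\bar\gamma$. Let $m(t)\in\Z[t]$ be the minimal polynomial of $\beta$ over $\Q$ and factor $\chi(t)=m(t)^{a}Q(t)$ in $\Z[t]$ (by Gauss's lemma) with $\gcd(m,Q)=1$. Then the subspace
\[
W:=\ker\bigl(m(F)^{a}\bigr)\subset V_\R
\]
is $F$-invariant and is defined over $\Q$ with respect to $\Lambda$, so that $W\cap\Lambda$ is a sublattice of full rank in $W$.

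The key step is to verify that $W$ is in fact a \emph{complex} subspace of the complex vector space $V$. For this I would complexify and use the decomposition $V_\R\otimes_\R\C=V^{1,0}\oplus V^{0,1}$, where the $\C$-linear extension of $F$ acts with eigenvalues $\alpha,\beta,\gamma$ on $V^{1,0}$ and with eigenvalues $\bar\alpha,\bar\beta,\bar\gamma$ on $V^{0,1}$. Because $\lvert\alpha\rvert>1$ and $\lvert\gamma\rvert<1$ while every root of $m$ lies on the unit circle, the only eigenvalues of $F$ on $V^{1,0}$ (resp.\ $V^{0,1}$) that are roots of $m$ are $\beta$ (and possibly $\bar\beta$, only when $\beta$ is real). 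Thus $W\otimes\C$ splits as a direct sum of a subspace of $V^{1,0}$ and a subspace of $V^{0,1}$; since multiplication by $i$ preserves each summand, $W$ is $J$-invariant, hence complex. Moreover, since $\beta$ appears with multiplicity exactly one among $\alpha,\beta,\gamma$, one computes $\dim_\C W = 1$.

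It then follows that $E:=W/(W\cap\Lambda)$ is an elliptic curve embedded as an $f$-invariant complex subgroup of $A$. Taking the quotient produces a surjective holomorphic map $\pi\colon A\to A/E$ onto a $2$-dimensional complex torus, and the $F$-invariance of $W$ descends to an automorphism $\overline f$ of $A/E$ satisfying $\pi\circ f=\overline f\circ\pi$. This is the desired $f$-invariant fibration. I expect the main technical step to be the verification that $W$ is a complex subspace; this is where the hypothesis $\lvert\alpha\rvert>1$ (equivalent to $\lambda_1(f)>1$) is essential, since it rules out the possibility that a Galois conjugate of $\beta$ distinct from $\bar\beta$ could also be an eigenvalue of $F$ on $V^{1,0}$, which would break the Hodge-type splitting of $W\otimes\C$. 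The order bound $\beta^{12}=1$ when $\lambda_1(f)>1$ is automatic once both implications are established: the forward direction, already proved, applied to the fibration we just constructed, forces $\beta$ to be a root of unity of order dividing $12$.
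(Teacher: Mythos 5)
Your proof is correct and follows essentially the same route as the paper: the forward direction is exactly the paragraph preceding the lemma, and the converse is the paper's (much terser) observation that a root-of-unity eigenvalue yields, after iterating, the eigenvalue $1$ and hence a rational $F$-invariant complex line projecting to an invariant elliptic curve, whence a fibration $A\to A/E$. Your verification that the rational subspace $W$ is $J$-invariant via the splitting $V_\R\otimes_\R\C=V^{1,0}\oplus V^{0,1}$ supplies precisely the detail the paper glosses over; the only imprecision is the blanket claim that ``preservation of a fibration is invariant under passing to a power'' (false in general), which is harmless here because $W=\ker\bigl(m(F^n)^a\bigr)$ commutes with $F$ and is therefore automatically $F$-invariant, not merely $F^n$-invariant.
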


Let us add one comment. If $f$ preserves a fibration with $\dim(B)=2$, then $\lambda_1(f)=\lambda_1({\overline{f}})$ is the first
dynamical degree of an automorphism of a surface and, as such, is either equal to $1$, to a quadratic number, or to a Salem
number (of degree $4$ because $\dim(H^{1,1}(B))=4$). If $\dim(B)=1$, then $f$ induces an automorphism of the fiber $E$
of $\pi$ that contains the origin; again, one obtains that $\lambda_1(f)$ is  equal to $1$, to a quadratic number, or to a Salem
number of degree $4$.

\subsubsection{Degree of $\beta$} 

Denote by $\varphi(t)\in \Z[t]$ the minimal polynomial  of $\beta$ and by $d(\beta)$ its  degree.  Since ${\overline{\beta}}=\beta^{-1}$,
either $\varphi$ has degree $1$, and $\beta =1 $ or $-1$, or $d(\beta)$ is even. 
\begin{itemize}
\item If $d(\beta)=1$, then $\beta =1$ or $-1$.
\item If $d(\beta)=2$, then $\beta$ and $\overline{\beta}$ are the two roots of $\varphi$; hence $\beta$
is a quadratic root of $1$.
\item If $d(\beta)=4$, write $\chi(t)=\varphi(t)\psi(t)$ with $\psi(t)\in \Z[t]$. If $\alpha$  is a root of $
\psi(t)$, then ${\overline{\alpha}}$ also, and the roots of $\varphi$ are $\beta$, $\gamma$, and
their conjugates: By Kronecker Lemma, this would imply that $\gamma$ is a root of $1$, a contradiction.
A similar contradiction is easily drawn if $\gamma$ is a root of $\psi(t)$.
\end{itemize}
Thus, {\sl{either $d(\beta)=6$ or $\beta$ is a root of one of order $\leq 4$}}. In the latter case, an iterate of $F$
has an eigenvalue equal to $1$; again, this implies that $f$ preserves a fibration of $A$ by subtori.

\subsubsection{} Let us assume now that $\lambda_1(f)$ is a Salem number. As remarked by Tuyen Truong, 
this implies that $\lambda_1(f)=\lambda_2(f)$. Truong's proof, which works for automorphisms $f\colon M\to M$ 
of compact k\"ahler manifolds of dimension $3$, is as follows:
\begin{proof}
The duality between $H^{1,1}(M;\C)$ and $H^{2,2}(M;\C)$ gives
\[
\lambda_2 ( f )  =  \lambda_1 ( f^{-1} ).
\]
Moreover since $\lambda_1(f)$  is  a Salem number, any eigenvalue for  $f^*_1$  will also be an eigenvalue of  $(f^{-1})^*_1$.  Thus we have an inequality $\lambda_1 ( f )  \leq  \lambda_2 (f )$. Arguing with  $f^{-1}$  gives equality.
\end{proof}
Coming back to automorphisms of tori of dimension $3$ with $\lambda_1(f)=\alpha{\overline{\alpha}}$ a Salem number, 
we deduce that  $\vert \beta\vert =1$ and $\gamma{\overline{\gamma}}$ is the only Galois conjugate of $\lambda_1(f)$ 
with modulus $<1$.

Let us assume that $d(\beta)=6$. Let $\sigma$ be an automorphism of the field ${\overline{\Q}}$ that
maps $\alpha$ onto $\beta$; then $\sigma(\alpha{\overline{\alpha}})= \beta \sigma({\overline{\alpha}})$
is a Galois conjugate of $\lambda_1(f)$. If $\sigma(\alpha{\overline{\alpha}})$ has modulus
$1$, then $\sigma({\overline{\alpha}})$ too;  hence $\sigma({\overline{\alpha}})={\overline{\beta}}$ and
$\sigma(\lambda_1(f))=1$, a contradiction. We deduce that
the modulus of $\sigma(\alpha{\overline{\alpha}})$ is $<1$ and $\sigma(\alpha{\overline{\alpha}})=\gamma {\overline{\gamma}}$.
This is a contradiction because $\sigma({\overline{\alpha}})$ is a conjugate of $\alpha$, and no conjugate of $\alpha$
has modulus equal to $\gamma {\overline{\gamma}}$.

Hence $d(\beta)<6$ and we know from the previous paragraph that $f$ preserves an invariant fibration. Thus, with Lemma~\ref{lem:roots_of_1}, we proved the
following statement of Oguiso and Truong (their proof is almost the same):

\begin{pro}
Let $f\colon A\to A$ be an automorphism of a $3$-dimensional torus with $\lambda_1(f)>1$. Then, $\lambda_1(f)$ is a quadratic number or a Salem
number if and only if $f$ preserves a non-trivial fibration of $A$.
\end{pro}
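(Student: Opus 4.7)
My plan is to prove the two directions of the biconditional separately, using the eigenvalue analysis of $F$ developed in the preceding paragraphs.

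\emph{Forward implication.} Suppose $f$ preserves a non-trivial fibration $\pi\colon A\to B$. Since $\pi$ is $f$-equivariant and $A$ is a torus, after composing $f$ with a translation (which does not change its cohomological action) one may assume $f(0)=0$; then $B$ is itself a torus of dimension $1$ or $2$, with fibers which are subtori of $A$. In the case $\dim B=1$, the fiber $E$ through the origin is an $f$-invariant $2$-dimensional subtorus; in the case $\dim B=2$ the induced map $\bar f\colon B\to B$ is an automorphism of a $2$-dimensional torus. In either case, a direct eigenvalue comparison (using the short exact sequence $0\to \pi^*H^{1,0}(B)\to H^{1,0}(A)\to H^{1,0}(\mathrm{fiber})\to 0$) shows that one eigenvalue of $F$ on $V^\vee$ is a root of unity. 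Under the assumption $\lambda_1(f)>1$ this eigenvalue must be the middle one $\beta$, so $\lambda_1(f)$ coincides with the first dynamical degree of an automorphism of a $2$-dimensional complex torus. The classification of \S7.2 then forces $\lambda_1(f)\in\{1,\text{quadratic},\text{Salem of degree }4\}$, and the hypothesis $\lambda_1(f)>1$ excludes the first.

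\emph{Converse implication.} Suppose $\lambda_1(f)$ is a quadratic number or a Salem number. By the Lemma preceding the proposition, it suffices to prove that $\beta$ is a root of unity; by the dichotomy on $d(\beta)$ established in the preceding paragraph, this amounts to excluding $d(\beta)=6$. The Salem case is handled by the paper's Galois-theoretic argument: for $\sigma$ with $\sigma(\alpha)=\beta$, the modulus of $\sigma(\alpha\overline{\alpha})=\beta\,\sigma(\overline\alpha)$ equals $|\sigma(\overline\alpha)|\in\{|\alpha|,1,|\gamma|\}$, which is incompatible with the admissible moduli of Galois conjugates of the Salem number $\lambda_1$. For the quadratic case I plan to adapt the same strategy with even simpler bookkeeping: since $\chi(0)=1$, the number $\lambda_1=\alpha\overline\alpha$ is a unit of a real quadratic field, with conjugate $\lambda_1'$ satisfying $\lambda_1\lambda_1'=\pm 1$ and hence $|\lambda_1'|=|\alpha|^{-2}$, so the Galois orbit of $\lambda_1$ has moduli $\{|\alpha|^2,|\alpha|^{-2}\}$. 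If $d(\beta)=6$ and $\sigma(\alpha)=\beta$, then $|\sigma(\alpha\overline\alpha)|=|\sigma(\overline\alpha)|$ lies in $\{|\alpha|,1,|\alpha|^{-1}\}$, a set disjoint from $\{|\alpha|^2,|\alpha|^{-2}\}$ as soon as $|\alpha|>1$. This contradiction forces $d(\beta)<6$, hence $\beta$ is a root of unity of order at most $4$.

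\emph{Main obstacle.} The subtlety, shared with the paper's Salem argument, is that $\sigma$ need not commute with complex conjugation, so $\sigma(\overline\alpha)$ is only known to lie among the six roots of $\chi$ and is not a priori equal to $\overline{\sigma(\alpha)}$; the argument above uses only the modulus of $\sigma(\overline\alpha)$ and so sidesteps this issue. A further point to verify carefully is the degenerate case in which $\alpha$ happens to be real: then $\alpha=\overline\alpha$ is a repeated root of $\chi$, which forces $\chi$ to be reducible and therefore $d(\beta)<6$ automatically, so this case is actually easier. The forward direction should likewise be checked to survive such eigenvalue degeneracies by working at the level of eigenvalue multisets rather than distinct eigenvalues.
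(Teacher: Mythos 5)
Your overall architecture is the same as the paper's. The forward implication is the paper's observation that an invariant fibration forces the middle eigenvalue $\beta$ to be a root of unity, so that $\lambda_1(f)$ equals the first dynamical degree of an automorphism of a $2$-dimensional torus (the fiber or the base) and is therefore quadratic or a Salem number of degree $4$; the converse reduces, via the lemma ``fibration $\Leftrightarrow$ $\beta$ is a root of unity'' and the dichotomy on $d(\beta)$, to excluding $d(\beta)=6$ by the Galois/modulus comparison. Where you genuinely add something is the quadratic case of the converse: the paper only runs the Galois argument when $\lambda_1(f)$ is a Salem number, where Truong's duality remark first gives $\lambda_1(f)=\lambda_2(f)$, i.e.\ $\vert\beta\vert=1$, and the quadratic case is left implicit. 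Your observation that $\lambda_1(f)=\alpha\overline{\alpha}$ is then a quadratic unit whose conjugate has modulus $\lambda_1(f)^{-1}$ is correct and is the right way to close that case.

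There is, however, a gap in the quadratic argument as written: the steps $\vert\sigma(\alpha\overline{\alpha})\vert=\vert\sigma(\overline{\alpha})\vert$ and $\vert\sigma(\overline{\alpha})\vert\in\{\vert\alpha\vert,1,\vert\alpha\vert^{-1}\}$ both presuppose $\vert\beta\vert=1$ and $\vert\gamma\vert=\vert\alpha\vert^{-1}$. In the Salem case this is supplied by Truong's remark; in the quadratic case you have not justified it, and the proposition as stated does not include the hypothesis $\lambda_1(f)=\lambda_2(f)$ (the section's standing assumption, which the paper's Salem argument is careful to re-derive rather than invoke). If $\vert\beta\vert\neq 1$, your modulus count with $\sigma(\alpha)=\beta$ only yields $\vert\beta\,\sigma(\overline{\alpha})\vert\in\{\vert\alpha\vert\vert\beta\vert,\ \vert\beta\vert^{2},\ \vert\beta\vert\vert\gamma\vert\}$ with $\vert\beta\vert\vert\gamma\vert=\vert\alpha\vert^{-1}$, and comparison with $\{\vert\alpha\vert^{2},\vert\alpha\vert^{-2}\}$ leaves open the possibility $\vert\beta\vert=\vert\alpha\vert$ (hence $\vert\gamma\vert=\vert\alpha\vert^{-2}$). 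The fix is cheap: also apply an automorphism $\tau$ with $\tau(\alpha)=\gamma$; then $\vert\tau(\alpha\overline{\alpha})\vert\in\{\vert\alpha\vert\vert\gamma\vert,\ \vert\gamma\vert\vert\beta\vert,\ \vert\gamma\vert^{2}\}$ can lie in $\{\vert\alpha\vert^{2},\vert\alpha\vert^{-2}\}$ only if $\vert\beta\vert=1$, while the substitution $\sigma(\alpha)=\beta$ can succeed only if $\vert\beta\vert=\vert\alpha\vert$; since $\vert\alpha\vert>1$ these are incompatible, and $d(\beta)=6$ is excluded. (Alternatively, state explicitly that you work under the standing hypothesis $\lambda_1(f)=\lambda_2(f)$.) Your cautionary remarks --- that $\sigma$ need not commute with complex conjugation, and that a real $\alpha$ makes $\chi$ inseparable hence reducible --- are both correct and worth keeping.
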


\subsubsection{} 
We now assume that $\lambda_1(f)=\lambda_2(f)$ (i.e. $\vert \beta \vert =1$) and $d(\beta)=6$. In particular, the characteristic polynomial $\chi$ is irreducible. 

\begin{rem}
Since $\beta^{-1}={\overline{\beta}}$ is a conjugate of $\beta$, and the Galois group acts
by permutations on the set of roots of $\chi$, $\alpha^{-1}$ and $\gamma^{-1}$ are roots
of $\chi$. This implies that $\alpha^{-1}=\gamma$ or ${\overline{\gamma}}$. We deduce
that the vector space $W_1(f^*)$ of eigenvectors of $f^*$ in $H^2(A;\C)$ has dimension 
$3$. \end{rem}

Thus, if $d(\beta)=6$, then the roots of $\chi(t)$ can be organized as follows: two complex
conjugate roots with modulus $>1$, which we denote by $\alpha$ and ${\overline{\alpha}}$, 
two on the unit circle, namely $\beta$ and ${\overline{\beta}}$, and two in the open
unit disk, $\gamma$ and ${\overline{\gamma}}$ such that
\[
\{\alpha, {\overline{\alpha}} \}=\{\gamma^{-1}, {\overline{\gamma}}^{-1}\}.
\]
Thus, 
\[
\chi(t)= \prod (t-\nu)(t-1/\nu)
\]
where $\nu$ describes $\{\alpha, \beta, {\overline{\alpha}}\}$. In other words, $\chi(t)$ is
a reciprocal polynomial: $t^6\chi(t^{-1})=\chi(t)$. 

\begin{pro}
There exists an automorphism $f$ of a compact torus of dimension $3$ such that (i)
$\lambda_1(f)=\lambda_2(f)>1$ and (ii) $f$ does not preserve any non-trivial fibration. 
\end{pro}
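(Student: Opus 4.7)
The strategy is to exhibit $f$ explicitly as the $\C$-linear automorphism of a $3$-torus $A = V/\Lambda$ induced by an integer companion matrix $F$ on $\Z^6$, with characteristic polynomial $\chi(t)$ whose roots have the structure already isolated above: a non-real complex conjugate pair $\alpha, \bar{\alpha}$ with $\vert \alpha\vert > 1$, a pair $\beta, \bar\beta = 1/\beta$ on the unit circle, and the reciprocal pair $1/\alpha, 1/\bar{\alpha}$; moreover $\chi$ must be irreducible over $\Q$ so that $d(\beta) = 6$. Since $\chi$ is reciprocal, the substitution $y = t + 1/t$ sends $\chi(t)/t^3$ to a cubic $\psi(y) = y^3 + ay^2 + (b-3)y + (c-2a)$ (where $\chi(t) = t^6 + at^5 + bt^4 + ct^3 + bt^2 + at + 1$); the required splitting of $\chi$ translates exactly into $\psi$ having one real root in $(-2, 2)$ (producing $\beta, \bar\beta$ on the unit circle) and two non-real complex conjugate roots (automatically producing $\alpha, \bar\alpha$ off the unit circle since $t \cdot (1/t) = 1$ with $t \neq 1/t$ whenever $y \notin [-2,2]$). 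A convenient explicit example is $\psi(y) = y^3 + y - 1$, which gives $\chi(t) = t^6 + 4t^4 - t^3 + 4t^2 + 1$.

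The hard part is verifying that $\chi$ is irreducible over $\Q$. Being a monic reciprocal polynomial with $\chi(\pm 1) \neq 0$, any non-trivial factorization in $\Z[t]$ must be of the form $\chi(t) = (t^2 + \mu t + 1)\, q(t)$ with $\mu \in \Z$ and $\vert \mu\vert \leq 2$. The cases $\mu = \pm 2$ are ruled out since they force $\pm 1$ to be a root of $\chi$. The remaining cases $\mu \in \{-1, 0, 1\}$ correspond to $t^2 + \mu t + 1$ being the minimal polynomial of a primitive $3$rd, $4$th, or $6$th root of unity, and are excluded by direct evaluation: for the example above, one computes $\chi(i) = i$, $\chi(e^{2i\pi/3}) = -3$, and $\chi(e^{i\pi/3}) = -1$, so $\chi$ is indeed irreducible.

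With $\chi$ in hand, the torus is built in a standard way. Let $F : \R^6 \to \R^6$ be the companion matrix of $\chi$, preserving $\Lambda = \Z^6$. Complexify to $F_\C$ on $\C^6 = \R^6 \otimes \C$ and choose one eigenvalue from each conjugate pair, say $\alpha, \beta, 1/\alpha$; then $V := E_\alpha \oplus E_\beta \oplus E_{1/\alpha}$ satisfies $V \cap \bar V = 0$ and $V \oplus \bar V = \C^6$. The projection $\C^6 \to V$ along $\bar V$ restricts to an $\R$-linear isomorphism $\R^6 \to V$, which transports the complex structure on $V$ to a complex structure $J$ on $\R^6$ commuting with $F$. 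Then $A := (\R^6, J)/\Lambda$ is a compact complex torus of dimension $3$, and $F$ descends to an automorphism $f : A \to A$ whose induced action on $H^{1,0}(A;\C) \cong V^\vee$ has eigenvalues $\alpha, \beta, 1/\alpha$.

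Finally one reads off the conclusions. Since $\vert \alpha\vert > 1 = \vert \beta\vert > \vert 1/\alpha\vert$, the formulas for $\lambda_1(f)$ and $\lambda_2(f)$ on a $3$-torus recalled above give $\lambda_1(f) = \alpha \bar\alpha = \vert \alpha\vert^2 > 1$ and $\lambda_2(f) = (\gamma\bar\gamma)^{-1} = \vert \alpha\vert^2$, so $\lambda_1(f) = \lambda_2(f) > 1$. By Lemma~\ref{lem:roots_of_1}, $f$ preserves a non-trivial fibration only if $\beta$ is a root of unity; but the minimal polynomial of $\beta$ divides the irreducible polynomial $\chi$ and therefore equals $\chi$, which is not cyclotomic since it has the root $\alpha$ with $\vert \alpha\vert > 1$. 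Hence $\beta$ is not a root of unity and $f$ preserves no non-trivial fibration, which completes the argument.
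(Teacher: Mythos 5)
Your proof is correct and follows essentially the same route as the paper: build an irreducible reciprocal sextic with two roots on the unit circle and a non-real quadruple off it via the substitution $y=t+1/t$, realize its companion matrix as an automorphism of a complex torus by choosing one eigenvalue from each conjugate pair, and conclude via Lemma~\ref{lem:roots_of_1}. You merely use a different explicit polynomial ($t^6+4t^4-t^3+4t^2+1$ instead of the paper's $t^6+t^5-2t^3+t+1$) and supply more detail on irreducibility and the complex structure than the paper does; just note that the reduction of a putative factorization to a factor $t^2+\mu t+1$ really rests on the established root configuration (no real roots, the off-circle roots forming a single modulus-$\neq1$ quadruple), not on reciprocity and $\chi(\pm1)\neq0$ alone.
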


\begin{proof}
Consider an irreducible, unitary polynomial $\chi(t)\in \Z[t]$ of degree $6$, with the additional
property $\chi(1/t)t^6=\chi(t).$ Assume that one of its roots $\alpha\in \C$ has modulus $>1$ and  
is not a real number and that another root $\beta$ has modulus $1$. Then the roots of $\chi$
are $\alpha$, $\gamma=1/{\overline{\alpha}}$, $\beta$, and their complex conjugates. Write
\[
\chi(t)= t^6 + at^5 + bt^4 + ct^3 + dt^3+et+ 1.
\]
Then, consider the companion matrix $M_\chi\in \SL_6(\Z)$ with characteristic polynomial $\chi(t)$. 
Its eigenvalues are $\alpha$, $\gamma=1/{\overline{\alpha}}$, $\beta$, and their complex conjugates.
Thus, there is a complex structure ${\sc{j}}$ on $\R^6$ which commutes to the action of $M_\chi$. 
The quotient of this complex vector space $(\R^6,{\sc{j}})$ by the $M_\chi$-invariant lattice $\Z^6\subset \R^6$
is a complex torus, on which $M_\chi$ induces an automorphism $f$ with all the desired properties. 

It remains to construct such a polynomial $\chi(t)$. One can take 
\[
\chi(t)=t^6+ t^5 -2t^3 + t + 1.
\]
See the remark below for a general strategy.
\end{proof}

\begin{rem}
For the proof of the previous proposition, one needs to construct irreducible, unitary polynomials
$\chi(t)\in \Z[t]$ such that: 
\begin{itemize}
\item $\chi$ is reciprocal and has exactly two roots on the unit circle;
\item $\chi$ has degree $6$.
\end{itemize}
Write 
\[
\chi(t)= t^6 + at^5 + bt^4 + ct^3 + dt^3+et+ 1.
\]
Consider the new variable $s=t+1/t$. Then 
\[
\theta(s):=\chi(t)/t^3= s^3+as^2 + (b-3)s + (c-2a)
\]
is a polynomial of degree $3$, with integer coefficients, and exactly one real 
root (namely $s=\beta+ {\overline{\beta}}$); this root is between $-2$ and $2$. 
Then, it is not hard to start from $\theta$ with the desired properties and reconstruct
$\chi$.
\end{rem}
}}

%
%

\vspace{8mm}

\bibliographystyle{plain}
\bibliography{bibliobkc}
\nocite{}

\end{document}